\documentclass{amsart}
\usepackage{amsmath}
\usepackage{amsfonts}

\setcounter{MaxMatrixCols}{10}

\newtheorem{theorem}{Theorem}[section]
\theoremstyle{plain}

\newtheorem{corollary}[theorem]{Corollary}

\newtheorem{definition}[theorem]{Definition}
\newtheorem{example}[theorem]{Example}

\newtheorem{lemma}[theorem]{Lemma}
\newtheorem{notation}[theorem]{Notation}
\newtheorem{problem}[theorem]{Problem}
\newtheorem{proposition}[theorem]{Proposition}
\newtheorem{remark}[theorem]{Remark}

\numberwithin{equation}{section}
\input{tcilatex}

\begin{document}
\title[Isomorphisms of non-commutative domains]{Isomorphisms of
non-commutative domain algebras}
\author{Alvaro Arias}
\address{Department of Mathematics\\
University of Denver\\
Denver CO 80208}
\email{aarias@math.du.edu}
\urladdr{http://www.math.du.edu/\symbol{126}aarias}
\author{Fr\'{e}d\'{e}ric Latr\'{e}moli\`{e}re}
\address{Department of Mathematics\\
University of Denver\\
Denver CO 80208}
\email{frederic@math.du.edu}
\urladdr{http://www.math.du.edu/\symbol{126}frederic}
\date{November 2008}
\subjclass{Primary: 47L15, Secondary: 47A63, 46L52, 32A07, 32M05}
\keywords{Non-self-adjoint operator algebras, disk algebra, weighted shifts,
biholomorphic domains, circular domains}

\begin{abstract}
Noncommutative domain algebras were introduced by Popescu as the
non-selfadjoint operator algebras generated by weighted shifts on the Full
Fock space. This paper uses results from several complex variables to
classify many noncommutative domain algebras, and it uses results from
operator theory to obtain new bounded domains in $\mathbb{C}^{n}$ with
non-compact automorphic group.
\end{abstract}

\maketitle

\section{Introduction}

\qquad Noncommutative domain algebras, introduced in \cite{Po-domain},
generalize the noncommutative disk algebras and are defined as norm closures
of the algebras generated by a family of weighted shifts on the Full Fock
space. This paper investigates the isomorphism problem for this class of
algebras. The fundamental tool we use is the theory of functions in several
complex variables in $\mathbb{C}^{n}$. Formally, we apply Cartan's Lemma 
\cite{Cartan} and Sunada's Theorem \cite{Sunada} to domains in $\mathbb{C}%
^{n}$ naturally associated with our noncommutative domain algebras to derive
a first classification result: if there exists an isometric isomorphism
between two noncommutative domain algebras whose dual map takes the zero
character to the zero character, then the noncommutative domain algebras are
related via a simple linear transformation of their generators. When $n=2,$
Thullen characterization of domains of $%
\mathbb{C}
^{2}$ with non-compact automorphic group \cite{Thullen}\ gives more
information.

An application of our result is that there are many non-isomorphic
noncommutative domain algebras. In addition, we characterize the disk
algebras among all noncommutative domain algebras, and we obtain interesting
examples of new domains with noncompact automorphic group in $\mathbb{C}^{n}$%
.

Our paper is structured as follows. We first present a survey of the field
of noncommutative domain algebras, which allows us to introduce our
notations as well. We then construct contravariant functors between
noncommutative domain algebras with completely contractive unital
homomorphisms and domains in $%
\mathbb{C}
^{m}$ with holomorphic maps and deduce from them a few classification
results. Last, we present several applications to important examples of
noncommutative domains.

In this paper, the set $\mathbb{N}\backslash \left\{ 0\right\} $ of nonzero
natural numbers will be denoted by $\mathbb{N}^{\ast }$. The identity map on
a Hilbert space $\mathcal{H}$ will be denoted\ by $1_{\mathcal{H}}$, or
simply $1$ when no confusion may arise. The algebra of all bounded linear
operators on a Hilbert space $\mathcal{H}$ will be denoted by $\mathcal{B}%
\left( \mathcal{H}\right) $.

We also will adopt the convention that if $\chi $ is an $n$-tuple of
elements in a set $E$ then for all $i=1,\ldots ,n$ we write $\chi _{i}$ for
its $i^{th}$ projection, i.e. $\chi =\left( \chi _{1},\ldots ,\chi
_{n}\right) $.

The domain algebras are represented by formal power series of
\textquotedblleft free\textquotedblright\ variables $X_{1},\ldots ,X_{n}$.
For example, the non-commutative disc algebra with $n$ variable is given by $%
X_{1}+\cdots +X_{n}.$ To illustrate some computations and results of the
paper we look at the noncommutative domains with free formal power series%
\begin{equation*}
f=X_{1}+X_{2}+X_{1}X_{2}\text{ \qquad and\qquad\ }g=X_{1}+X_{2}+\frac{1}{2}%
X_{1}X_{2}+\frac{1}{2}X_{2}X_{1}
\end{equation*}%
and we prove that they are not isomorphic. We consider these algebras as
test examples. Their power series are close to the power series of the
noncommutative disc algebra and they cannot be distinguished by their $1$%
-dimensional representations.

\section{Background}

This first section allows us to introduce the category of noncommutative
domain algebras. These algebras were introduced in \cite{Po-domain} and much
notation is required for a proper description, so this section is also
presented as a guide for the reader about these objects.

\subsection{The Full Hilbert Fock Space of a Hilbert Space}

We start by introducing notations which we will use all throughout this
paper. For any Hilbert space $\mathcal{H}$, the full Fock space $\mathcal{F}%
\left( \mathcal{H}\right) $ of $\mathcal{H}$ is the completion of:%
\begin{equation*}
\dbigoplus\limits_{k\in \mathbb{N}}\mathcal{H}^{\otimes k}=\mathbb{C}\oplus 
\mathcal{H}\oplus \left( \mathcal{H}\otimes \mathcal{H}\right) \oplus \left( 
\mathcal{H}\otimes \mathcal{H}\otimes \mathcal{H}\right) \oplus \cdots
\end{equation*}%
for the Hilbert norm associated to the inner product $\left\langle
.,.\right\rangle $ defined on elementary tensors by:%
\begin{equation*}
\left\langle \xi _{0}\otimes \cdots \otimes \xi _{m},\zeta _{0}\otimes
\ldots \otimes \zeta _{k}\right\rangle =\left\{ 
\begin{array}{c}
0\text{ if }m\not=k\text{,} \\ 
\dprod\limits_{j=0}^{n}\left\langle \xi _{j},\zeta _{j}\right\rangle _{%
\mathcal{H}}\text{ otherwise,}%
\end{array}%
\right.
\end{equation*}%
where $\left\langle .,.\right\rangle _{\mathcal{H}}$ is the inner product on 
$\mathcal{H}$. We shall denote $\mathcal{F}\left( \mathbb{C}^{n}\right) $ by 
$\mathcal{F}_{n}$ for all $n\in \mathbb{N}\backslash \left\{ 0\right\} $. We
now shall exhibit, for any nonzero natural number $n$, a natural isomorphism
of Hilbert space between $\mathcal{F}_{n}$ and $\ell ^{2}\left( \mathbb{F}%
_{n}^{+}\right) $, where $\mathbb{F}_{n}^{+}$ is the free semigroup on $n$
generators $g_{1},\ldots ,g_{n}$ with identity element $e$.

Let $n\in \mathbb{N}^{\ast }$ be fixed and let $\left\{ e_{1},\ldots
,e_{n}\right\} $ be the canonical basis of $\mathbb{C}^{n}$. For any word $%
\alpha \in \mathbb{F}_{n}^{+}$ we define:%
\begin{equation*}
e_{\alpha }=e_{i_{1}}\otimes e_{i_{2}}\otimes \cdots \otimes
e_{i_{\left\vert \alpha \right\vert }}
\end{equation*}%
where $\left\vert \alpha \right\vert $ is the length of $\alpha $ and $%
\alpha =g_{i_{1}}\ldots g_{i_{\left\vert \alpha \right\vert }}$. Note that
this decomposition of $\alpha $ is unique in the semigroup $\mathbb{F}%
_{n}^{+}$. The map which sends, for any $\alpha \in \mathbb{F}_{n}^{+}$, the
element $e_{\alpha }\in \mathcal{F}_{n}$ to the series $\delta _{\alpha }\in
\ell ^{2}\left( \mathbb{F}_{n}^{+}\right) $ defined by:%
\begin{equation*}
\forall \beta \in \mathbb{F}_{n}^{+}\ \ \ \ \delta _{\alpha }\left( \beta
\right) =\left\{ 
\begin{array}{c}
1\text{ if }\alpha =\beta \text{,} \\ 
0\text{ otherwise}%
\end{array}%
\right.
\end{equation*}%
extends to a linear isomorphism from $\mathcal{F}_{n}$ onto $\ell ^{2}\left( 
\mathbb{F}_{n}^{+}\right) $. The easy proof is left to the reader. In this
paper, we will identify these two Hilbert spaces.

\subsection{Left Creation Operators and Noncommutative Disk Algebras}

At the root of the study of disk algebras reside the concept of row
contractions:

\begin{definition}
Let $\mathcal{H}$ be a Hilbert space and $n\in \mathbb{N}^{\ast }$. An $n$%
-row contraction on $\mathcal{H}$ is an $n$-tuple of operators $\left(
T_{1},\ldots ,T_{n}\right) $ on $\mathcal{H}$ such that:%
\begin{equation*}
\sum_{i=0}^{n}T_{i}T_{i}^{\ast }\leq 1_{\mathcal{H}}\text{.}
\end{equation*}
\end{definition}

In other words, the operator $T=\left[ T_{1}\ T_{2}\ \cdots \ T_{n}\right] $
acting on $\mathcal{H}^{n}$ is a contraction since $TT^{\ast }\leq 1_{%
\mathcal{H}_{n}}$. We now construct a row contraction which plays a
fundamental role in our study:

\begin{definition}
Let $n\in \mathbb{N}^{\ast }$. For $i\in \left\{ 1,\ldots ,n\right\} $, the $%
i^{th}$ left creation operator $S_{i}$ on $\mathcal{F}_{n}$ is the unique
continuous linear extension of:%
\begin{equation*}
\forall \alpha \in \mathbb{F}_{n}^{+}\ \ \ \ S_{i}\left( \delta _{\alpha
}\right) =\delta _{g_{i}\alpha }\text{.}
\end{equation*}
\end{definition}

We note that the linear extension of $\delta _{\alpha }\mapsto \delta
_{g_{i}\alpha }$ is an isometry on the span of $\left\{ \delta _{\alpha
}:\alpha \in \mathbb{F}_{n}^{+}\right\} $ and therefore is uniformly
continuous on this span which is dense in $\mathcal{F}_{n}$ so the
continuous extension is well-defined and unique. One can easily check that

\begin{proposition}
The $n$-tuple $\left( S_{1},\ldots ,S_{n}\right) $ is a row contraction of
isometries with orthogonal ranges.
\end{proposition}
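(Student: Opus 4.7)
The plan is to verify the three assertions in the proposition separately, all by direct computation on the orthonormal basis $\{\delta_{\alpha} : \alpha \in \mathbb{F}_n^+\}$ of $\mathcal{F}_n$.

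First I would show that each $S_i$ is an isometry. Since $\alpha \mapsto g_i\alpha$ is an injection from $\mathbb{F}_n^+$ into itself, the map $\delta_\alpha \mapsto \delta_{g_i\alpha}$ sends the orthonormal basis of $\mathcal{F}_n$ to an orthonormal family. Hence $S_i$ preserves inner products on the dense span of the $\delta_\alpha$, and by continuity $S_i^\ast S_i = 1_{\mathcal{F}_n}$.

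Next I would establish the orthogonality of the ranges. The range of $S_i$ is the closed linear span of $\{\delta_{g_i\alpha} : \alpha \in \mathbb{F}_n^+\}$. If $i \neq j$, then by the uniqueness of the decomposition of a word in $\mathbb{F}_n^+$, no word of the form $g_i\alpha$ can equal a word of the form $g_j\beta$, so $\langle \delta_{g_i\alpha}, \delta_{g_j\beta}\rangle = 0$ for all $\alpha,\beta$, and the ranges of $S_i$ and $S_j$ are orthogonal.

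Finally, I would deduce the row contraction inequality from the previous two facts. Since $S_i$ is an isometry, $S_i S_i^\ast$ is the orthogonal projection onto $\mathrm{Ran}(S_i)$; since these ranges are pairwise orthogonal, $\sum_{i=1}^n S_i S_i^\ast$ is the orthogonal projection onto $\bigoplus_{i=1}^n \mathrm{Ran}(S_i)$, which equals $\overline{\mathrm{span}}\{\delta_\beta : |\beta| \geq 1\}$. This is a closed subspace of $\mathcal{F}_n$ (its orthogonal complement is $\mathbb{C}\delta_e$), so $\sum_{i=1}^n S_i S_i^\ast \leq 1_{\mathcal{F}_n}$, as required. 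No step here presents a real obstacle; the only point requiring attention is the book-keeping justifying that concatenation in $\mathbb{F}_n^+$ really is injective with the claimed disjointness of images, which is immediate from the unique-factorization property of the free semigroup already invoked when defining $e_\alpha$.
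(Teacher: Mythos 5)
Your proof is correct. The paper gives no argument for this proposition (it is introduced with ``One can easily check that''), and your direct verification on the orthonormal basis $\{\delta_{\alpha}:\alpha\in\mathbb{F}_{n}^{+}\}$ --- isometry from injectivity of $\alpha\mapsto g_{i}\alpha$, orthogonality of ranges from unique factorization in the free semigroup, and the identification of $\sum_{i=1}^{n}S_{i}S_{i}^{\ast}$ as the projection onto the complement of $\mathbb{C}\delta_{e}$ --- is precisely the routine check the authors intend.
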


In this paper, we will focus our attention on algebras generated by various
shifts operators. We introduce:

\begin{definition}
The noncommutative disk algebra $\mathcal{A}_{n}$ is the norm closure of the
algebra generated by $\left\{ S_{1},\ldots ,S_{n},1\right\} $.
\end{definition}

This algebra enjoys the following remarkable property:

\begin{theorem}[G. Popescu \protect\cite{Po-vninequality}]
\label{Popescu1}Let $\left( T_{1},\ldots ,T_{n}\right) $ be a $n$-tuple of
operators on a Hilbert space $\mathcal{H}$. Then $\left( T_{1},\ldots
,T_{n}\right) $ is a row contraction if and only if there exists a
completely bounded unital morphism $\psi :\mathcal{A}_{n}\longrightarrow 
\mathcal{B}\left( \mathcal{H}\right) $ such that $\psi (S_{i})=T_{i}$ for $%
i=1,\ldots ,n$.
\end{theorem}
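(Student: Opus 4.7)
The plan is to prove both directions, with the nontrivial direction relying on the existence of an isometric dilation for row contractions (Popescu's noncommutative analogue of the Sz.-Nagy dilation).

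For the easy direction, assume a unital completely contractive morphism $\psi : \mathcal{A}_n \to \mathcal{B}(\mathcal{H})$ with $\psi(S_i) = T_i$ exists. The row $[S_1,\ldots,S_n]$ is a contraction in $M_{1,n}(\mathcal{A}_n)$ because $\sum_i S_i S_i^\ast \le 1_{\mathcal{F}_n}$ (an easy consequence of the orthogonality of the ranges of the $S_i$). Applying $\psi$ entrywise and using that the amplification $\mathrm{id}_{M_{1,n}}\otimes \psi$ has norm at most one, I conclude $\|[T_1,\ldots,T_n]\|\le 1$, i.e., $(T_1,\ldots,T_n)$ is a row contraction.

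For the converse, suppose $(T_1,\ldots,T_n)$ is a row contraction. First I would establish the noncommutative von Neumann inequality
\begin{equation*}
\bigl\| p(T_1,\ldots,T_n) \bigr\| \le \bigl\| p(S_1,\ldots,S_n) \bigr\|
\end{equation*}
for every noncommutative polynomial $p$ in $n$ free variables. The mechanism is dilation: using a Sch\"affer-type construction based on the defect operator $\Delta = (1_{\mathcal{H}} - \sum_i T_i T_i^\ast)^{1/2}$, I would build a Hilbert space $\mathcal{K} \supset \mathcal{H}$ and isometries $V_1,\ldots,V_n$ on $\mathcal{K}$ with pairwise orthogonal ranges such that $\mathcal{H}$ is jointly co-invariant and $T_\alpha = P_{\mathcal{H}} V_\alpha|_{\mathcal{H}}$ for every word $\alpha \in \mathbb{F}_n^+$. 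A Wold-type decomposition for $n$-tuples of isometries with orthogonal ranges shows that $(V_1,\ldots,V_n)$ splits as a copy of $(S_1\otimes 1_{\mathcal{E}},\ldots,S_n\otimes 1_{\mathcal{E}})$ acting on $\mathcal{F}_n\otimes \mathcal{E}$ (for some multiplicity space $\mathcal{E}$) together with a Cuntz-type summand where $\sum_i V_iV_i^\ast = 1$. Since both summands satisfy the same $\ast$-algebraic relations on words $V_\alpha V_\beta^\ast$ as the free isometries on $\mathcal{F}_n$, the norm of $p(V_1,\ldots,V_n)$ coincides with $\| p(S_1,\ldots,S_n) \|$, and compressing to $\mathcal{H}$ yields the desired inequality.

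Having the von Neumann inequality, the map $p(S_1,\ldots,S_n) \mapsto p(T_1,\ldots,T_n)$ is well defined and contractive on the dense subalgebra of polynomials in $\{S_1,\ldots,S_n,1\}$, so it extends by continuity to a unital contractive morphism $\psi : \mathcal{A}_n \to \mathcal{B}(\mathcal{H})$. To upgrade contractivity to complete contractivity, I repeat the argument with matrix-valued polynomials: for any $m \in \mathbb{N}^\ast$, the tuple $(T_1 \otimes 1_{\mathbb{C}^m},\ldots,T_n \otimes 1_{\mathbb{C}^m})$ on $\mathcal{H}\otimes \mathbb{C}^m$ remains a row contraction with the same dilation $(V_i \otimes 1_{\mathbb{C}^m})$, and the same compression argument gives $\|P(T)\|_{M_m} \le \|P(S)\|_{M_m}$ for every $P \in M_m(\mathbb{C}\langle X_1,\ldots,X_n\rangle)$.

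The main obstacle is the dilation step: one must verify that the Sch\"affer-type construction on $\mathcal{H} \oplus (\mathcal{F}_n \otimes \overline{\Delta \mathcal{H}})$ yields isometries with orthogonal ranges, and that the Wold decomposition for row isometries produces the correct norm identification with the free shifts. Once that is in place, the extension to the norm closure and the matrix amplification used for complete contractivity are standard.
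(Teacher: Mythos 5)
Your proposal follows what the paper explicitly identifies as Popescu's \emph{original} proof --- an isometric dilation of the row contraction followed by a Wold decomposition of the resulting row isometry --- whereas the paper presents the later, shorter argument via Poisson transforms: for a $C_{\cdot 0}$-row contraction one writes down the explicit isometry $K(h)=\sum_{\alpha }\delta _{\alpha }\otimes \Delta T_{\alpha }^{\ast }h$, checks $K^{\ast }\left( S_{i}\otimes 1\right) =T_{i}K^{\ast }$, and obtains the completely contractive unital morphism $a\mapsto K^{\ast }\left( a\otimes 1\right) K$ directly, handling a general row contraction by applying this to $\left( rT_{1},\ldots ,rT_{n}\right) $ and letting $r\rightarrow 1^{-}$. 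The Poisson-transform route gives complete contractivity for free (compression by an isometry is automatically completely contractive) and avoids the Wold decomposition entirely; your route is the classical one and produces the von Neumann inequality as an explicit intermediate statement, but one step is not justified as written. In the decomposition $V\cong \left( S\otimes 1_{\mathcal{E}}\right) \oplus W$, the Cuntz-type summand $W$ does \emph{not} satisfy the same $\ast $-algebraic relations as the free shifts --- it satisfies the extra relation $\sum_{i}W_{i}W_{i}^{\ast }=1$, which $\left( S_{1},\ldots ,S_{n}\right) $ does not --- and matching relations would not by itself control norms (for $n=1$, compare a unitary with the unilateral shift on $p(z)=1+z$). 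What you actually need is the one-sided estimate $\left\Vert p(W)\right\Vert \leq \left\Vert p(S)\right\Vert $, which requires its own argument: for instance, factor through the quotient of the Cuntz--Toeplitz algebra $C^{\ast }\left( S_{1},\ldots ,S_{n}\right) $ by the compacts onto $\mathcal{O}_{n}$ and invoke the universal property of $\mathcal{O}_{n}$, or apply the Poisson transform to $\left( rW_{1},\ldots ,rW_{n}\right) $ and let $r\rightarrow 1^{-}$. With that inequality (not equality) on the Cuntz summand, the rest of your outline --- compression to $\mathcal{H}$, extension to the norm closure, and matrix amplification for complete contractivity --- is sound. A minor point on the easy direction: the statement says ``completely bounded,'' but you correctly read it as ``completely contractive''; mere complete boundedness would not force $\sum_{i}T_{i}T_{i}^{\ast }\leq 1$.
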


\bigskip We will sketch a proof of this result at the end of this section.
But first, we notice that Theorem (\ref{Popescu1})\ formalizes the
fundamental idea that $\left( S_{1},\ldots ,S_{n}\right) $ are universal
among all row contractions. We also notice the following useful corollary:

\begin{corollary}[G. Popescu \protect\cite{Po-proceedings}]
The spectrum of $\mathcal{A}_{n}$ (i.e. the set of its characters, or one
dimensional representations) is the closed unit ball of $\mathbb{C}^{n}$.
\end{corollary}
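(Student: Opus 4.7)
The plan is to use Popescu's universal property (Theorem \ref{Popescu1}) in both directions, identifying characters of $\mathcal{A}_{n}$ with row contractions on the one-dimensional Hilbert space $\mathbb{C}$.

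First, I would set up the map $\Phi$ sending a character $\chi : \mathcal{A}_{n} \to \mathbb{C}$ to the tuple $(\chi(S_{1}), \ldots, \chi(S_{n})) \in \mathbb{C}^{n}$. Since $\mathcal{A}_{n}$ is generated (as a unital Banach algebra) by $S_{1}, \ldots, S_{n}$, the character $\chi$ is completely determined by $\Phi(\chi)$, so $\Phi$ is injective.

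Next, I would show $\Phi$ takes values in the closed unit ball. Any character on a unital Banach algebra has norm $1$, hence in particular $\chi$ is completely contractive when viewed as a map $\mathcal{A}_{n} \to \mathbb{C} = \mathcal{B}(\mathbb{C})$ (any bounded functional into a commutative C*-algebra is automatically completely bounded with the same norm). Setting $\lambda_{i} = \chi(S_{i})$ and applying $\chi$ entrywise to the row contraction $[S_{1}\ \cdots\ S_{n}]$, one obtains that the $1 \times n$ scalar row $[\lambda_{1}\ \cdots\ \lambda_{n}]$ is a contraction on $\mathbb{C}^{n}$, which is exactly the statement $\sum_{i=1}^{n} |\lambda_{i}|^{2} \leq 1$.

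For surjectivity, I would invoke Theorem \ref{Popescu1} with $\mathcal{H} = \mathbb{C}$: given $(\lambda_{1}, \ldots, \lambda_{n})$ in the closed unit ball of $\mathbb{C}^{n}$, the tuple $(\lambda_{1}, \ldots, \lambda_{n})$ is by definition a row contraction on $\mathbb{C}$, so Popescu's theorem produces a unital completely bounded homomorphism $\psi : \mathcal{A}_{n} \to \mathcal{B}(\mathbb{C}) = \mathbb{C}$ with $\psi(S_{i}) = \lambda_{i}$. Such a $\psi$ is a character, and $\Phi(\psi) = (\lambda_{1}, \ldots, \lambda_{n})$.

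There is no real obstacle here; the only point that requires care is verifying that an abstract character is automatically completely contractive, so that passing $\chi$ through the row contraction inequality $\sum S_{i} S_{i}^{\ast} \leq 1$ yields the scalar inequality $\sum |\lambda_{i}|^{2} \leq 1$. This is immediate from the fact that the target algebra $\mathbb{C}$ is commutative, so min and max matrix norms coincide and contractivity of $\chi$ implies complete contractivity.
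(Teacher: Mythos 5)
Your proposal is correct and follows essentially the same route as the paper: identify a character with its values on the generators, note that a character into the abelian algebra $\mathbb{C}$ is automatically completely contractive so that the tuple $(\chi(S_1),\ldots,\chi(S_n))$ is a row contraction on $\mathbb{C}$, and use Theorem \ref{Popescu1} with $\mathcal{H}=\mathbb{C}$ for surjectivity. The only cosmetic difference is that you verify the forward inclusion by applying $\chi$ entrywise to the row $[S_1\ \cdots\ S_n]$, whereas the paper cites the ``only if'' direction of Theorem \ref{Popescu1} directly; these amount to the same thing.
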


\begin{proof}
A character of $\mathcal{A}_{n}$ is fully defined by its image on the
generators $S_{1},\ldots ,S_{n}$. Moreover, any unital morphism from $%
\mathcal{A}_{n}$ into the Abelian algebra $\mathbb{C}$ is completely
contractive. Let $\left( z_{1},\ldots ,z_{n}\right) \in \mathbb{C}^{n}$.
Then by Theorem (\ref{Popescu1}) there exists a morphism $\psi $ from $%
\mathcal{A}_{n}$ into $\mathbb{C}$ with $\psi (S_{i})=z_{i}$ for all $%
i=1,\ldots ,n$ if and only if $\left[ z_{1}\ \cdots \ z_{n}\right] $ is a
row contraction, i.e. $1\geq \sum_{i=1}^{n}z_{i}\overline{z_{i}}%
=\sum_{i=1}^{n}\left\vert z_{i}\right\vert ^{2}$ as claimed.
\end{proof}

We now sketch a proof of Theorem (\ref{Popescu1}). The original proof of
Theorem (\ref{Popescu1}) combined a dilation theorem for row contractions
with a Wold decomposition of isometric row contractions. In \cite{Po-poisson}%
, Popescu found a shorter proof using Poisson transforms, which are explicit
dilations for row contractions $\left( T_{1},\ldots ,T_{n}\right) $ with the
additional property that, if $\varphi :A\in \mathcal{B}\left( \mathcal{H}%
\right) \mapsto \sum_{i=1}^{n}T_{i}AT_{i}^{\ast }$ then $\lim_{k\rightarrow
\infty }\varphi ^{k}(A)=0$ in the strong operator topology for all $A\in 
\mathcal{B}\left( \mathcal{H}\right) $. Such row contractions will be called 
$C_{\cdot 0}$-row contraction. The proof goes as follows: assume that $%
T=\left( T_{1},\ldots ,T_{n}\right) $ is a $C_{\cdot 0}$-row contraction on
a Hilbert space $\mathcal{H}$ and let $\Delta =\left( I-\sum_{i\leq
n}T_{i}T_{i}^{\ast }\right) ^{1/2}$. Define $K:\mathcal{H}\rightarrow \ell
_{2}\left( \mathbb{F}_{n}^{+}\right) \otimes \mathcal{H}$ by%
\begin{equation*}
\forall h\in \mathcal{H}\ \ \ \ K(h)=\sum_{\alpha \in \mathbb{F}%
_{n}^{+}}\delta _{a}\otimes \Delta \left( T_{\alpha }\right) ^{\ast }(h)%
\text{.}
\end{equation*}%
It is not hard to check that $K$ is an isometry that satisfies:%
\begin{equation*}
\forall i\in \left\{ 1,\ldots ,n\right\} \ \ \ \ K^{\ast }\left(
S_{i}\otimes 1\right) =T_{i}K^{\ast }\text{.}
\end{equation*}%
From this it follows that if we set:%
\begin{equation*}
\Phi :a\in \mathcal{A}_{n}\longmapsto K^{\ast }\left( a\otimes I\right) K
\end{equation*}%
then $\Phi $ is a completely contractive unital homomorphism which satisfies
Theorem (\ref{Popescu1}). The map $K\ $is called the Poisson kernel of $%
\left( T_{1},\ldots ,T_{n}\right) $, and the map $\Phi $ is called the
Poisson transform of $\left( T_{1},\ldots ,T_{n}\right) $.

The general case for row contractions follows from the observation that if $%
T=\left( T_{1},\ldots ,T_{n}\right) $ is a row contraction, then $%
T_{r}=\left( rT_{1},\ldots ,rT_{n}\right) $ is a $C_{\cdot 0}$-row
contraction for $0<r<1$. One then concludes by taking the limit as $%
r\rightarrow 1$.

The converse assertion in Theorem (\ref{Popescu1}) is clear.

\bigskip

The Poisson transform is a useful tool to study these algebras. For example
in \cite{Arias-Popescu-1} Poisson transforms were used to obtain
multivariate non-commutative and commutative Nevanlinna-Pick and
Caratheodory interpolation theorems. The same results were obtained in \cite%
{Davidson-Pitts-interpolation} using different methods. Multivariate
interpolation problems, particularly commutative ones, have received a lot
of attention in recent years.

This paper deals with natural generalizations of $\mathcal{A}_{n}$ where the
left creation operators are replaced by weighted shifts. We present these
notions in the next section.

\subsection{Weighted Shifts and Noncommutative Domain Algebras}

Popescu and the first author \cite{Arias-Popescu-2} proved that the
construction of a Poisson kernel and Poisson transform, as was done
previously for row-contractions, could be extended to a class of weighted
shifts $\left( T_{1},\ldots ,T_{n}\right) $ satisfying the condition that $%
\sum_{\alpha \in \mathbb{F}_{n}^{+}}a_{\alpha }T_{\alpha }T_{\alpha }^{\ast
}\leq 1$ for some coefficients $a_{\alpha }\geq 0$ ($\alpha \in \mathbb{F}%
_{n}^{+}$) in lieu of $\sum_{i=1}^{n}T_{i}T_{i}^{\ast }\leq 1$ and with the
notations defined below:

\begin{definition}
Let $\alpha \in \mathbb{F}_{n}^{+}$ and $T_{1},\ldots ,T_{n}$ be operators
on $\mathcal{H}$. Then $T_{\alpha }=T_{i_{1}}\ldots T_{i_{\left\vert \alpha
\right\vert }}$ where $\alpha =g_{i_{1}}\ldots g_{i_{\left\vert \alpha
\right\vert }}$. In other words, $\alpha \mapsto T_{\alpha }$ is the unique
unital multiplicative map such that $g_{i}\mapsto T_{i}$ for $i=1,\ldots ,n$.
\end{definition}

The coefficients $\left( a_{\alpha }\right) _{\alpha \in \mathbb{F}_{n}^{+}}$
were defined in relation with the weight of the shifts $\left( T_{1},\ldots
,T_{n}\right) $ as in a paper of Quiggin and were known to satisfy $%
a_{g_{i}}>0$ for all $i=1,\ldots ,n$ and $a_{\alpha }\geq 0$ for $\left\vert
\alpha \right\vert \geq 1$. They were in general difficult to compute.

\bigskip

In a more recent paper, Popescu \cite{Po-domain} generalizes further the
study of weighted shifts and their associated algebras by starting with a
general collection of nonnegative coefficients $\left( a_{\alpha }\right)
_{\alpha \in \mathbb{F}_{n}^{+}}$ satisfying the following conditions:

\begin{definition}
Let $\left( a_{\alpha }\right) _{\alpha \in \mathbb{F}_{n}^{+}}$ be a family
of nonnegative numbers and consider the formal power series in $n$ free
variables $X_{1},\ldots ,X_{n}$ defined by $f=\sum a_{\alpha }X_{\alpha }$
with $X_{a}=X_{i_{1}}X_{i_{2}}\cdots X_{i_{\left\vert \alpha \right\vert }}$
when $\alpha =g_{i_{1}}\ldots g_{i_{\left\vert \alpha \right\vert }}$. Then $%
f$ is called a positive regular $n$-free formal power series when the
following conditions are met:%
\begin{equation}
\left\{ 
\begin{array}{l}
a_{e}=0\text{,} \\ 
a_{g_{i}}>0\text{ for all }i\in \left\{ 1,\ldots ,n\right\} \text{,} \\ 
\sup_{n\in \mathbb{N}^{\ast }}\left( \left\vert \sum_{\left\vert \alpha
\right\vert =n}a_{\alpha }^{2}\right\vert \right) ^{\frac{1}{n}}=M<\infty 
\text{.}%
\end{array}%
\right.  \label{Condition1}
\end{equation}
\end{definition}

Popescu then produced a universal model for all operator $n$-tuples
satisfying $\sum_{\alpha \in \mathbb{F}_{n}^{+}}a_{\alpha }T_{\alpha
}T_{\alpha }^{\ast }\leq 1$ such that $f=\sum a_{\alpha }X_{\alpha }$ is a
positive regular $n$-free formal power series, based upon weighted shifts,
in a manner similar to the above constructions of the disk algebra. More
formally, he proves in \cite{Po-domain} the following fundamental result:

\begin{theorem}[G. Popescu \protect\cite{Po-domain}]
\label{Popescu2}Let us be given a positive regular $n$-free formal power
series $f=\sum a_{\alpha }X_{\alpha }$. Then there exists positive real
numbers $\left( b_{\alpha }\right) _{\alpha \in \mathbb{F}_{n}^{+}}$ such
that, if for all $i=1,\ldots ,n$ we define $W_{i}^{f}$ as the unique
continuous linear extension of:%
\begin{equation*}
W_{i}^{f}:\delta _{\alpha }\in \mathcal{F}_{n}\mapsto \sqrt[2]{\frac{%
b_{\alpha }}{b_{g_{i}\alpha }}}\delta _{g_{i}\alpha }
\end{equation*}%
then:%
\begin{equation*}
\sum_{\alpha \in \mathbb{F}_{n}^{+}}a_{\alpha }W_{\alpha }^{f}W_{\alpha
}^{f\ast }\leq 1\text{.}
\end{equation*}%
Moreover, if $\left( T_{1},\ldots ,T_{n}\right) $ is an $n$-tuple of
operators acting on some Hilbert space $\mathcal{H}$, then $\sum_{\alpha \in 
\mathbb{F}_{n}^{+}}a_{\alpha }T_{\alpha }T_{\alpha }^{\ast }\leq 1$ if and
only if there exists a unique completely contractive morphism of algebra $%
\Phi $ from the algebra generated\ by $\left\{ W_{1}^{f},\ldots
,W_{n}^{f}\right\} $ into $\mathcal{B}\left( \mathcal{H}\right) $ such that $%
W_{i}^{f}\mapsto T_{i}$ for all $i=1,\ldots ,n$.
\end{theorem}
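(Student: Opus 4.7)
My plan is to adapt the Poisson-transform argument sketched in the excerpt for the disk algebra to the weighted-shift setting. The construction has two components: producing the weights $(b_\alpha)$ and verifying the contraction inequality $\sum_\alpha a_\alpha W_\alpha^f W_\alpha^{f\ast} \leq 1$; and constructing a weighted Poisson kernel to furnish the universal property.

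I would define $(b_\alpha)_{\alpha \in \mathbb{F}_n^+}$ as the coefficients of the formal noncommutative power series $(1-f)^{-1} = \sum_{k \geq 0} f^k$. The condition $a_e = 0$ makes this geometric series converge in the $(X_1,\ldots,X_n)$-adic topology, and $a_{g_i} > 0$ for every $i$ forces $b_\alpha > 0$ by induction on word length. Since $(1-f)^{-1}$ is a two-sided inverse, $(b_\alpha)$ satisfies both the ``prefix'' and the ``suffix'' recursions
\[
b_\gamma = \delta_{\gamma,e} + \sum_{\substack{\alpha\beta = \gamma \\ \alpha \neq e}} a_\alpha b_\beta = \delta_{\gamma,e} + \sum_{\substack{\alpha\beta = \gamma \\ \beta \neq e}} a_\beta b_\alpha,
\]
a fact I will use crucially. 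The prefix recursion immediately gives $b_{g_i \alpha} \geq a_{g_i} b_\alpha$, hence $\|W_i^f\| \leq 1/\sqrt{a_{g_i}}$. An inductive computation yields $W_\alpha^f \delta_\beta = \sqrt{b_\beta/b_{\alpha\beta}}\, \delta_{\alpha\beta}$, so that $W_\alpha^f W_\alpha^{f\ast}$ is diagonal in the canonical basis with eigenvalue $b_\beta/b_\gamma$ at $\delta_\gamma$ whenever $\gamma = \alpha\beta$. Summing, $\sum_\alpha a_\alpha W_\alpha^f W_\alpha^{f\ast}$ acts on $\delta_\gamma$ as multiplication by $b_\gamma^{-1}\sum_{\alpha\beta=\gamma} a_\alpha b_\beta$, which by the prefix recursion equals $1 - b_\gamma^{-1}\delta_{\gamma,e} \leq 1$.

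For the universal property I set $\varphi(A) = \sum_\alpha a_\alpha T_\alpha A T_\alpha^*$ and $\Delta = (I - \varphi(I))^{1/2}$, and define the weighted Poisson kernel
\[
K : h \in \mathcal{H} \longmapsto \sum_{\alpha \in \mathbb{F}_n^+} \sqrt{b_\alpha}\, \delta_\alpha \otimes \Delta T_\alpha^* h \in \mathcal{F}_n \otimes \mathcal{H}.
\]
The coefficient $\sqrt{b_\alpha}$ is calibrated precisely so that $K^*(W_i^f \otimes 1) = T_i K^*$, as a direct computation on basic tensors confirms. Assuming first a $C_{\cdot 0}$-type hypothesis $\varphi^N(I) \to 0$ strongly, I would prove that $K$ is an isometry by writing $\|K(h)\|^2 = \langle \psi(\Delta^2) h, h\rangle$ with $\psi(A) = \sum_\alpha b_\alpha T_\alpha A T_\alpha^*$, identifying $\psi = \sum_{k\geq 0} \varphi^k$ via $b_\gamma = \sum_{k} [X_\gamma] f^k$, and then telescoping $\sum_{k=0}^{N-1} \varphi^k(\Delta^2) = I - \varphi^N(I) \to I$. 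Then $\Phi(a) = K^*(a \otimes I) K$ is the required completely contractive unital morphism, and uniqueness follows from norm-density of the polynomial algebra generated by the $W_i^f$.

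To pass from the $C_{\cdot 0}$ case to a general $n$-tuple satisfying $\sum a_\alpha T_\alpha T_\alpha^* \leq 1$, I would scale: the tuple $(rT_1,\ldots,rT_n)$ still satisfies the contraction inequality for $0 < r < 1$, and the growth bound in condition (\ref{Condition1}) together with $r < 1$ forces $\varphi_r^N(I) \to 0$. The resulting morphisms $\Phi_r$ with $\Phi_r(W_i^f) = rT_i$ extend, via a point-weak-$*$ limit as $r \to 1^-$, to the desired $\Phi$. The main technical obstacle I anticipate is the rearrangement underlying the identification $\psi = \sum_{k \geq 0} \varphi^k$ at the operator level together with the justification of the telescoping identity that yields $\psi(\Delta^2) = I$; this is where both recursions for $(b_\alpha)$ and the $C_{\cdot 0}$ hypothesis enter in an essential way. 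Once this algebraic bookkeeping is in place the rest is a direct weighted analogue of Popescu's original Poisson transform argument.
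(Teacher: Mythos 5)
Your proposal is correct and follows essentially the same route as the paper's sketch: the weights $(b_\alpha)$ arise as the coefficients of $(1-f)^{-1}$ (the paper realizes this Neumann series concretely inside $\mathcal{A}_n$ after an $r$-rescaling with $rM<\tfrac12$, you do it formally, but the coefficients and the prefix/suffix recursions are identical), the inequality $\sum a_\alpha W^f_\alpha W^{f\ast}_\alpha\leq 1$ is obtained by the same diagonal computation showing this operator is the projection off $\mathbb{C}\delta_e$, and the universal property is established by the same weighted Poisson kernel $K(h)=\sum_\alpha\sqrt{b_\alpha}\,\delta_\alpha\otimes\Delta T_\alpha^\ast h$ with the $C_{\cdot 0}$ reduction via $(rT_1,\ldots,rT_n)$ and a limit as $r\to 1^-$. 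The extra detail you supply on why $K$ is an isometry (identifying $\psi=\sum_k\varphi^k$ and telescoping) is exactly the computation the paper leaves to the reader.
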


The fundamental role played by the algebra generated by the weighted shifts $%
W_{1}^{f},\ldots ,W_{n}^{f}$ in Theorem (\ref{Popescu2}) justifies the
following definition, where we use the notations of Theorem (\ref{Popescu2}):

\begin{definition}
\label{NCDomainAlgDef}Let $f=\sum_{\alpha \in \mathbb{F}_{n}^{+}}a_{\alpha
}X_{\alpha }$ be a positive regular $n$-free formal power series. The
algebra $\mathcal{A}\left( \mathcal{D}_{f}\right) $ is the norm closure of
the algebra generated by the weighted shifts $W_{1}^{f},\ldots ,W_{n}^{f}$
of Theorem (\ref{Popescu2}) and the identity. The algebra $\mathcal{A}\left( 
\mathcal{D}_{f}\right) $ is called the noncommutative domain algebra
associated to $f$.
\end{definition}

In addition, following Popescu's notations, we define the noncommutative
domain associated to a positive regular free formal power series $f$ in a
Hilbert space $\mathcal{H}$ as the \textquotedblleft
preimage\textquotedblright\ of the unit ball by $f$ i.e.:

\begin{definition}
\label{NCDomain}Let $\mathcal{H}$ be a Hilbert space and $f=\sum_{\alpha \in 
\mathbb{F}_{n}^{+}}a_{\alpha }X_{\alpha }$ be a positive regular $n$-free
formal power series. Then:%
\begin{equation*}
\mathcal{D}_{f}\left( \mathcal{H}\right) =\left\{ \left( T_{1},\ldots
,T_{n}\right) \in \mathcal{B}\left( \mathcal{H}\right) :\sum_{\alpha \in 
\mathbb{F}_{n}^{+}}a_{\alpha }T_{\alpha }T_{\alpha }^{\ast }\leq 1\right\} 
\text{.}
\end{equation*}
\end{definition}

The following corollary of Theorem (\ref{Popescu2}) will play a fundamental
role in our paper, and its proof is simply a direct application of Theorem (%
\ref{Popescu2}).

\begin{corollary}
\label{corollary-representations}Let $T=\left( T_{1},\ldots ,T_{n}\right)
\in M_{k\times k}^{n}$. Then $T\in \mathcal{D}_{f}\left( \mathbb{C}%
^{k}\right) $ if and only if there exists a necessarily unique completely
contractive unital algebra morphism $\Phi :\mathcal{A}\left( \mathcal{D}%
_{f}\right) \longrightarrow M_{k\times k}$ such that $\Phi \left(
W_{i}^{f}\right) =T_{i}$ for $i\leq n.$
\end{corollary}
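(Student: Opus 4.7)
The plan is to deduce this corollary directly from Theorem (\ref{Popescu2}), with the only nontrivial issue being the passage between the (non-closed) algebra generated by the weighted shifts $W_1^f,\ldots,W_n^f$, on which Popescu's theorem operates, and its norm closure $\mathcal{A}(\mathcal{D}_f)$, on which we wish to realize the morphism. Let $\mathcal{P}_f$ denote the unital subalgebra of $\mathcal{A}(\mathcal{D}_f)$ generated by $\{W_1^f,\ldots,W_n^f,1\}$, so that $\mathcal{A}(\mathcal{D}_f)$ is by Definition (\ref{NCDomainAlgDef}) its operator-norm closure.

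For the forward implication, suppose $T=(T_1,\ldots,T_n)\in\mathcal{D}_f(\mathbb{C}^k)$, which by Definition (\ref{NCDomain}) means exactly that $\sum_{\alpha\in\mathbb{F}_n^+}a_\alpha T_\alpha T_\alpha^\ast\leq 1$. Theorem (\ref{Popescu2}) then furnishes a unique completely contractive unital morphism $\Phi_0:\mathcal{P}_f\to M_{k\times k}$ with $\Phi_0(W_i^f)=T_i$ for $i=1,\ldots,n$. Since $\Phi_0$ is completely contractive, it is in particular norm continuous, and since $M_{k\times k}$ is complete, $\Phi_0$ admits a unique continuous (hence completely contractive and still unital) extension $\Phi$ to the norm closure $\mathcal{A}(\mathcal{D}_f)$ of $\mathcal{P}_f$. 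The algebraic identities that make $\Phi_0$ a morphism pass to the closure by joint continuity of multiplication.

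For the converse, assume $\Phi:\mathcal{A}(\mathcal{D}_f)\to M_{k\times k}$ is a completely contractive unital morphism with $\Phi(W_i^f)=T_i$. Its restriction to $\mathcal{P}_f$ is again a completely contractive unital morphism sending $W_i^f$ to $T_i$, so Theorem (\ref{Popescu2}) yields $\sum_{\alpha\in\mathbb{F}_n^+}a_\alpha T_\alpha T_\alpha^\ast\leq 1$, i.e., $T\in\mathcal{D}_f(\mathbb{C}^k)$.

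Uniqueness of $\Phi$ is automatic: any two morphisms with the prescribed values on the generators agree on the unital subalgebra $\mathcal{P}_f$, and being norm continuous, must agree on its closure $\mathcal{A}(\mathcal{D}_f)$. The only conceptual step in the argument is the density/continuity passage between $\mathcal{P}_f$ and $\mathcal{A}(\mathcal{D}_f)$, and even this is routine because complete contractivity supplies exactly the continuity needed to extend through the norm closure.
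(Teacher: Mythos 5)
Your proof is correct and takes essentially the same route as the paper, which gives no argument beyond observing that the corollary is a direct application of Theorem (\ref{Popescu2}); your write-up merely makes explicit the routine continuity/density passage from the algebra generated by the $W_i^f$ to its norm closure $\mathcal{A}\left( \mathcal{D}_{f}\right) $, which the paper leaves implicit.
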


\begin{notation}
\label{notation}Sometimes we denote the $\Phi $ of Corollary \ref%
{corollary-representations} by $\left\langle T,\cdot \right\rangle _{f}^{n}$
and when no confusion may arise, we will denote $\left\langle T,\cdot
\right\rangle _{f}^{k}$ simply as $\left\langle T,\cdot \right\rangle $.
That is, 
\begin{equation*}
\left( \left\langle T,W_{1}^{f}\right\rangle _{f}^{k},\ldots ,\left\langle
T,W_{n}^{f}\right\rangle _{f}^{k}\right) =T\text{.}
\end{equation*}%
We also will write $\varphi (T)$ for $\left\langle T,\varphi \right\rangle $
for any $\varphi \in \mathcal{A}\left( \mathcal{D}_{f}\right) $ and $T\in 
\mathcal{D}_{f}\left( \mathbb{C}^{k}\right) $ following the notation in \cite%
{Po-domain} that emphasizes the functional calculus of $\mathcal{A}\left( 
\mathcal{D}_{f}\right) $ (see Lemma \ref{Holo}). The notation $\left\langle
.,.\right\rangle $ is meant to emphasize the role of duality in our present
work.
\end{notation}

\bigskip For the reader convenience, and as a mean to fix our notation, we
present a sktech of the proof of Theorem (\ref{Popescu2}). We refer the
reader to \cite{Po-domain} for the details of this proof and the development
of the general theory of noncommutative domain algebras.

\bigskip We fix a positive regular $n$-free formal power series $%
f=\sum_{\alpha \in \mathbb{F}_{n}^{+}}a_{\alpha }X_{\alpha }$. Let us choose 
$r>0$ such that $rM<\frac{1}{2}$. Denote by $\left\Vert .\right\Vert $ the
norm of operators acting on $\mathcal{F}_{n},$ the full Fock space. We have:%
\begin{eqnarray*}
\left\Vert \sum_{\left\vert \alpha \right\vert =k}a_{\alpha }r^{k}S_{\alpha
}\right\Vert ^{2} &=&r^{2k}\left\Vert \left( \sum_{\left\vert \alpha
\right\vert =k}a_{\alpha }r^{k}S_{\alpha }\right) ^{\ast }\left(
\sum_{\left\vert \alpha \right\vert =k}a_{\alpha }r^{k}S_{\alpha }\right)
\right\Vert \\
&=&r^{2k}\left\Vert \sum_{\left\vert \alpha \right\vert =\left\vert \beta
\right\vert =k}a_{\alpha }a_{\beta }S_{\alpha }^{\ast }S_{\beta }\right\Vert
\\
&=&r^{2k}\left\Vert \sum_{\left\vert \alpha \right\vert =k}a_{\alpha
}^{2}\right\Vert
\end{eqnarray*}%
so $\left\Vert \sum_{\left\vert \alpha \right\vert =k}a_{\alpha
}r^{k}S_{\alpha }\right\Vert \leq \left( rM\right) ^{k}$ and thus $%
\left\Vert \sum_{\left\vert \alpha \right\vert \geq 1}a_{\alpha
}r^{\left\vert \alpha \right\vert }S_{\alpha }\right\Vert <1$ and hence that 
$I-\sum_{\left\vert \alpha \right\vert \geq 1}a_{\alpha }r^{\left\vert
\alpha \right\vert }S_{\alpha }$ is invertible in $\mathcal{A}_{n}$.
Therefore there exist coefficients $\left( b_{\alpha }\right) _{\alpha \in 
\mathbb{F}_{n}^{+}}$ such that:%
\begin{equation}
\left( I-\sum_{\left\vert \alpha \right\vert \geq 1}a_{\alpha }r^{\left\vert
\alpha \right\vert }S_{\alpha }\right) ^{-1}=\sum_{\left\vert \alpha
\right\vert \geq 1}b_{\alpha }r^{\left\vert \alpha \right\vert }S_{\alpha
}\in \mathcal{A}_{n}\text{.}  \label{popescu2-1}
\end{equation}

The relation between $\left( b_{\alpha }\right) _{\alpha \in \mathbb{F}%
_{n}^{+}}$ and $\left( a_{\alpha }\right) _{\alpha \in \mathbb{F}_{n}^{+}}$
is given by the equalities:%
\begin{equation}
\left\{ 
\begin{tabular}{lll}
$\sum_{k=0}^{\infty }\left( \sum_{\left\vert \alpha \right\vert \geq
1}a_{\alpha }r^{\left\vert \alpha \right\vert }S_{\alpha }\right) ^{k}$ & $=$
& $\sum_{\left\vert \alpha \right\vert \geq 1}b_{\alpha }r^{\left\vert
\alpha \right\vert }S_{\alpha }$ \\ 
$\left( I-\sum_{\left\vert \alpha \right\vert \geq 1}a_{\alpha
}r^{\left\vert \alpha \right\vert }S_{\alpha }\right) \left(
\sum_{\left\vert \alpha \right\vert \geq 1}b_{\alpha }r^{\left\vert \alpha
\right\vert }S_{\alpha }\right) $ & $=$ & $1$ \\ 
$\left( \sum_{\left\vert \alpha \right\vert \geq 1}b_{\alpha }r^{\left\vert
\alpha \right\vert }S_{\alpha }\right) \left( I-\sum_{\left\vert \alpha
\right\vert \geq 1}a_{\alpha }r^{\left\vert \alpha \right\vert }S_{\alpha
}\right) $ & $=$ & $1$%
\end{tabular}%
\right.  \label{popescu2-2}
\end{equation}%
The first equality in (\ref{popescu2-2}) is valid because in any Banach
algebras $B$, if $t\in B$ and $\left\Vert t\right\Vert <1$ then $\sum_{k\geq
0}t^{k}=\left( 1-t\right) ^{-1}$. The other two are clear.

From the first equality in (\ref{popescu2-2}), we deduce that $b_{0}=1$, $%
b_{g_{i}}=a_{g_{i}}$ for $i=1,\ldots n$, $%
b_{g_{i}g_{j}}=a_{g_{i}g_{j}}+a_{g_{i}}a_{g_{j}},$ and more generally that
for $\alpha \in \mathbb{F}_{n}^{+}$:

\begin{equation}
b_{\alpha }=\sum_{j=1}^{\left\vert \alpha \right\vert }\sum_{\substack{ %
\gamma _{1}\cdots \gamma _{j}=\alpha  \\ \left\vert \gamma _{1}\right\vert
\geq 1,\ldots ,\left\vert \gamma _{j}\right\vert \geq 1}}a_{\gamma
_{1}}a_{\gamma _{2}}\cdots a_{\gamma _{j}}>0\text{.}  \label{popescu2-5}
\end{equation}%
This implies that 
\begin{equation}
b_{\alpha \beta }\geq b_{\alpha }b_{\beta }\text{.}  \label{popescu2-3}
\end{equation}

From the second and third equalities in (\ref{popescu2-2}) we deduce that
for $\left\vert \alpha \right\vert \geq 1$:%
\begin{equation}
b_{\alpha }=\sum_{\substack{ \beta \gamma =\alpha  \\ \left\vert \beta
\right\vert \geq 1}}a_{\beta }b_{\gamma }=\sum_{\substack{ \beta \gamma
=\alpha  \\ \left\vert \gamma \right\vert \geq 1}}b_{\beta }a_{\gamma }\text{%
.}  \label{popescu2-4}
\end{equation}

For $i\in \left\{ 1,\ldots ,n\right\} $ and $\alpha \in \mathbb{F}_{n}^{+}$
we define $W_{i}^{f}$ as the linear extension of:%
\begin{equation*}
W_{i}^{f}\delta _{\alpha }=\sqrt{\frac{b_{\alpha }}{b_{g_{i}\alpha }}}\delta
_{g_{i}\alpha }\text{.}
\end{equation*}%
It follows from Inequality (\ref{popescu2-3}) that for all $i=1,\ldots ,n$
we have $\sqrt{\frac{b_{\alpha }}{b_{g_{i}\alpha }}}\leq \sqrt{\frac{b_{0}}{%
b_{g_{i}}}}=\sqrt{\frac{1}{b_{g_{i}}}}$ and thus $\left\Vert
W_{i}^{f}\right\Vert =\sqrt{\frac{1}{b_{g_{i}}}}$. So $W_{i}^{f}$ can be
extended uniquely as a linear operator on $\mathcal{F}_{n}$.

\bigskip We now check that $\left( W_{1}^{f},\ldots ,W_{n}^{f}\right) \in 
\mathcal{D}_{f}\left( \mathcal{F}_{n}\right) $. Let $\alpha ,\beta \in 
\mathbb{F}_{n}^{+}$ with $\left\vert \beta \right\vert \geq 1$ we have:%
\begin{equation*}
W_{\alpha }^{f}W_{\alpha }^{f\ast }\delta _{\beta }=\left\{ 
\begin{array}{c}
\frac{b_{\gamma }}{b_{\beta }}\delta _{\beta }\text{ if }\beta =\gamma
\alpha \text{,} \\ 
0\text{ otherwise.}%
\end{array}%
\right.
\end{equation*}

\bigskip We deduce that $\left( \sum_{\left\vert \alpha \right\vert \geq
1}a_{\alpha }W_{\alpha }^{f}W_{\alpha }^{f\ast }\right) \delta _{\beta
}=\left( \sum_{\substack{ \alpha \gamma =\beta  \\ \left\vert \alpha
\right\vert \geq 1}}\frac{a_{\alpha }b_{\gamma }}{b_{\beta }}\right) \delta
_{\beta }=\delta _{\beta }$

Then if $\beta \in \mathbb{F}_{n}^{+}$ and $\left\vert \beta \right\vert
\geq 1$ and we have: 
\begin{equation*}
\left( \sum_{\left\vert \alpha \right\vert \geq 1}a_{\alpha }W_{\alpha
}^{f}W_{\alpha }^{f\ast }\right) \delta _{\beta }=\left( \sum_{\substack{ %
\alpha \gamma =\beta  \\ \left\vert \alpha \right\vert \geq 1}}\frac{%
a_{\alpha }b_{\gamma }}{b_{\beta }}\right) \delta _{\beta }=\delta _{\beta }
\end{equation*}%
by Equality (\ref{popescu2-4}). Then it follows that $\sum_{\left\vert
\alpha \right\vert \geq 1}a_{\alpha }W_{\alpha }^{f}W_{\alpha }^{f\ast }$ is
the projection orthogonal to $\mathbb{C}\delta _{0}$ and therefore is less
than or equal to the identity as desired.

\bigskip To conclude, we note that we can use a Poisson kernel again to
prove the characterization of completely bounded unital representations of $%
\mathcal{A}\left( \mathcal{D}_{f}\right) $. For any Hilbert space $\mathcal{H%
}$, any $\left( T_{1},\ldots ,T_{n}\right) \in \mathcal{D}_{f}\left( 
\mathcal{H}\right) $ satisfies the $C_{.0}$-condition when for all $A\in 
\mathcal{B}\left( \mathcal{H}\right) $ we have that $\lim_{k\rightarrow
\infty }\varphi ^{k}(A)=0$ in the strong operator topology with $\varphi
:A\in \mathcal{B}\left( \mathcal{H}\right) \mapsto \sum_{\alpha \in \mathbb{F%
}_{n}^{+}}a_{\alpha }T_{\alpha }AT_{\alpha }^{\ast }$. Given an element $%
\left( T_{1},\ldots ,T_{n}\right) $ of $\mathcal{D}_{f}\left( \mathcal{H}%
\right) $ satisfying the $C_{\cdot 0}$-condition, we define $\Delta =\sqrt[2]%
{1-\sum_{\left\vert \alpha \right\vert \geq 1}a_{\alpha }T_{\alpha
}T_{\alpha }^{\ast }}$ and set:%
\begin{equation*}
K:h\in \mathcal{H}\mapsto \sum_{\alpha \in \mathbb{F}_{n}^{+}}\sqrt[2]{%
b_{\alpha }}\delta _{\alpha }\otimes \Delta (T_{\alpha })^{\ast }\left(
h\right) \in \mathcal{F}_{n}\otimes \mathcal{H}\text{.}
\end{equation*}%
Then $K$ is an isometry such that $K^{\ast }\left( W_{i}^{f}\otimes 1\right)
K=T_{i}$ for $i=1,\ldots ,n$. The computations which remain are along the
same line as for Theorem (\ref{Popescu1}).

\bigskip We now set informally the problem which this paper begin to address:

\begin{problem}
Given positive regular free formal power series $f$ and $g,$ under what
conditions is $\mathcal{A}\left( \mathcal{D}_{f}\right) $ isomorphic to $%
\mathcal{A}\left( \mathcal{D}_{g}\right) $?
\end{problem}

\bigskip To be more precise, we introduce the proper notion of isomorphism
for this paper:

\begin{definition}
Let \textrm{NCD} be the category of noncommutative domains, defined as
follows: the objects of \textrm{NCD} are the noncommutative domain algebras $%
\mathcal{A}\left( \mathcal{D}_{f}\right) $ for all positive regular free
formal power series $f$ and the morphisms between these algebras are the
completely isometric algebra unital isomorphisms.
\end{definition}

Given a Hilbert space $\mathcal{H}$, Definition (\ref{NCDomain})\ shows how
to associate to any object in \textrm{NCD} a set of operators. We will see
that when $\mathcal{H}$ is finite dimensional then we can extends this map
between objects into a functor of well-chosen categories. This functor will
be the invariant we shall use further on to distinguish between many objects
in \textrm{NCD}.

\section{Isomorphisms between noncommutative domains}

We start by observing that a completely bounded isomorphism between
noncommutative domain algebras gives rise to a sequence of multivariate
holomorphic maps. We thus construct a functor from \textrm{NCD} to the
category of domains in $%
\mathbb{C}
^{k}$. We start by recalling basic definitions \cite{Krantz-book} from
multivariate analysis to set our notations.

A $n$-index is an element $k\in \mathbb{N}^{n}$. Given $z=\left(
z_{1},\ldots ,z_{n}\right) \in \mathbb{C}^{n}$ and $y=\left( y_{1},\ldots
,y_{n}\right) \in \mathbb{C}^{n}$, we define $x\cdot y=\left(
x_{1}y_{1},\ldots ,x_{n}y_{n}\right) $. In the same spirit, for any $n$%
-index $k=\left( k_{1},\ldots ,k_{n}\right) $, we write $z^{k}$ for $\left(
z_{1}^{k_{1}},\ldots ,z_{n}^{k_{n}}\right) $.

\begin{definition}
Let $U\subseteq \mathbb{C}^{n}$ be a nonempty open set. A function $%
f:U\longrightarrow \mathbb{C}^{n}$ is holomorphic on $U$ when, for all $z\in
U$, there exists a neighborhood $V\subseteq U$ of $z$ and complex numbers $%
\left( \alpha _{k}\right) _{k\in \mathbb{N}^{\ast k}}$ such that for all $%
y\in V$ we have:%
\begin{equation*}
f(y)=\sum \alpha _{k}\cdot \left( y-z\right) ^{k}\text{.}
\end{equation*}
\end{definition}

Note in particular that if we write $f=\left( f_{1},\ldots ,f_{n}\right) $
then $f_{i}$ is a function from $U$ to $\mathbb{C}$ which can be written as
a power series in its $n$ variables. Conversely, if $f_{1},\ldots ,f_{n}$
all are power series in their $n$ variables then $f$ is holomorphic.

To ease the discussion in this paper, we will allow ourselves the following
mild extension of the definition of holomorphy to sets with nonempty
interior:

\begin{definition}
Let $F\subseteq \mathbb{C}^{n}$ be a set of nonempty interior $\overset{%
\circ }{F}$. Then $f:F\longrightarrow \mathbb{C}^{n}$ is holomorphic on $F$
is $f$ restricted to $\overset{\circ }{F}$ is holomorphic on $\overset{\circ 
}{F}$ and $f$ is continuous on $F$.
\end{definition}

We will be interested in the matter of mapping a set onto another one by
means of a holomorphic map with a holomorphic inverse.

\begin{definition}
Let $U,V\subseteq \mathbb{C}^{n}$ be two sets with nonempty interiors. If
there exists a holomorphic map $f:U\longrightarrow V$ and a holomorphic map $%
g:V\longrightarrow U$ such that $f\circ g=\limfunc{Id}_{U}$ and $g\circ f=%
\limfunc{Id}_{V}$ then $U$ and $V$ are biholomorphically equivalent, and $f$
and $g$ are called biholomorphic maps.
\end{definition}

We refer to \cite{isaev-krantz} for a survey of the problem of holomorphic
mapping, which was a great motivation for the development of multivariate
complex analysis. Unlike the case of domains of the complex plane, where the
Riemann mapping theorem shows that any simply connected proper open subset
of $\mathbb{C}$ is bi-holomorphically equivalent to the open unit disk,
there are many biholomorphic equivalence classes of domains in $\mathbb{C}%
^{n}$ for $n>1$; in other words there is great rigidity for biholomorphic
maps between domains in $\mathbb{C}^{n}$ ($n>1$). We shall exploit this
rigidity in this paper to help classify the noncommutative domain algebras.

\subsection{Holomorphic Maps from Isomorphisms}

The fundamental observation of this paper is the following result which
creates a bridge between noncommutative domain algebra theory and the theory
of holomorphic mapping of domains in $\mathbb{C}^{m}$. To be formal, we set:

\begin{definition}
Let $k\in \mathbb{N}$. Let \textrm{HD}$_{k}$\textrm{\ }be the category of
domains in $\mathbb{C}^{k}$, i.e. the category whose objects are open
connected subsets of $\mathbb{C}^{k}$ for any $k\in \mathbb{N}$, and whose
morphisms are holomorphic maps.
\end{definition}

We shall now construct a contravariant functor from \textrm{NCD }to \textrm{%
HD}$_{k}$. To do so, we shall find the following two lemmas useful. These
two lemmas are part of Popescu's construction found in \cite{Po-domain}. We
include a proof of these results for the reader's convenience, as they can
be established directly, and it is again helpful to fix notations for the
rest of this paper.

The first result shows that any element in $\mathcal{A}\left( \mathcal{D}%
_{f}\right) $ admits a form of Fourier series expansion, where convergence
is understood in the radial sense. The second lemma proves that the pairings
between noncommutative domain algebras and the complex domains introduced in
(\ref{NCDomain}) define holomorphic functions.

\begin{lemma}[Popescu, \protect\cite{Po-domain}]
\label{radialCv}Let $a\in \mathcal{A}\left( \mathcal{D}_{f}\right) $ for
some positive regular free formal series $f$. Then there exists a unique
collection $\left( c_{\alpha }\right) _{\alpha \in \mathbb{F}_{n}^{+}}$ of
complex numbers such that for all $r\in \left( 0,1\right) $ the series:%
\begin{equation*}
\sum_{j=0}^{\infty }\sum_{\substack{ \alpha \in \mathbb{F}_{n}^{+}  \\ %
\left\vert \alpha \right\vert =j}}c_{\alpha }r^{\left\vert \alpha
\right\vert }W_{\alpha }
\end{equation*}%
converge in norm in $\mathcal{A}\left( \mathcal{D}_{f}\right) $ to $a_{r}$
defined by $\left\langle \left( rW_{1},\ldots ,rW_{n}\right) ,a\right\rangle
_{\ell ^{2}\left( \mathbb{F}_{n}^{+}\right) }$. Note that $%
\lim_{r\rightarrow 1^{-}}a_{r}=a$ in norm in $\mathcal{A}\left( \mathcal{D}%
_{f}\right) $.

We will denote $\sum_{\substack{ \alpha \in \mathbb{F}_{n}^{+}  \\ %
\left\vert \alpha \right\vert =j}}c_{\alpha }W_{\alpha }$ by $\left[ a\right]
_{j}$. Then $\left\Vert \left[ a\right] _{j}\right\Vert \leq \left\Vert
a\right\Vert $ in $\mathcal{A}\left( \mathcal{D}_{f}\right) $.
\end{lemma}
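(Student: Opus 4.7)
The key idea is to introduce a gauge action of the torus on $\mathcal{A}(\mathcal{D}_f)$ and then read off the Fourier coefficients. For any $\lambda\in\mathbb{C}$ with $|\lambda|\leq 1$, the tuple $(\lambda W_1^f,\ldots,\lambda W_n^f)$ lies in $\mathcal{D}_f(\mathcal{F}_n)$: indeed
\[
\sum_{|\alpha|\geq 1} a_\alpha (\lambda W_\alpha^f)(\lambda W_\alpha^f)^\ast = \sum_{|\alpha|\geq 1}|\lambda|^{2|\alpha|} a_\alpha W_\alpha^f W_\alpha^{f\ast}\le \sum_{|\alpha|\geq 1} a_\alpha W_\alpha^f W_\alpha^{f\ast}\le 1,
\]
since $|\lambda|^{2|\alpha|}\le 1$ and the summands are positive. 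Theorem~\ref{Popescu2} therefore produces a completely contractive unital morphism $\Psi_\lambda:\mathcal{A}(\mathcal{D}_f)\to\mathcal{B}(\mathcal{F}_n)$ with $\Psi_\lambda(W_i^f)=\lambda W_i^f$. When $|\lambda|=1$, the image is contained in $\mathcal{A}(\mathcal{D}_f)$ (polynomials in $\lambda W_i^f$ are polynomials in $W_i^f$) and $\Psi_\lambda\circ\Psi_{\overline\lambda}=\mathrm{id}$, so each $\Psi_{e^{i\theta}}$ is a completely isometric automorphism, and $a_r:=\Psi_r(a)$ of the statement satisfies $\|a_r\|\le\|a\|$ for $r\in(0,1)$.

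Next I would define the Fourier components via a Bochner integral. For $a$ polynomial, $\theta\mapsto\Psi_{e^{i\theta}}(a)$ is a trigonometric polynomial in $\theta$, hence continuous, and a standard $\varepsilon/3$ argument using density of polynomials and $\|\Psi_{e^{i\theta}}\|=1$ shows that $\theta\mapsto\Psi_{e^{i\theta}}(a)$ is norm-continuous for every $a$. Set
\[
[a]_j \;=\; \frac{1}{2\pi}\int_0^{2\pi} e^{-ij\theta}\,\Psi_{e^{i\theta}}(a)\,d\theta,
\]
so that $\|[a]_j\|\le\|a\|$. For a polynomial $p=\sum c_\alpha W_\alpha^f$ an immediate computation gives $[p]_j=\sum_{|\alpha|=j} c_\alpha W_\alpha^f$. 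Since the space $V_j=\mathrm{span}\{W_\alpha^f:|\alpha|=j\}$ is finite-dimensional (hence closed), and polynomials are dense, one concludes $[a]_j\in V_j$ for every $a$, so $[a]_j=\sum_{|\alpha|=j} c_\alpha W_\alpha^f$ for coefficients $(c_\alpha)$ which are unique because the $W_\alpha^f$ of fixed length are linearly independent (they send $\delta_e$ to pairwise orthogonal vectors $\sqrt{b_e/b_\alpha}\,\delta_\alpha$).

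Finally, for fixed $r\in(0,1)$, the estimate $\|[a]_j\|\le\|a\|$ gives absolute norm convergence of $\sum_{j\ge 0} r^j [a]_j$, since $\sum_{j\ge 0} r^j\|a\|=\|a\|/(1-r)<\infty$. For polynomials the equality $\Psi_r(p)=\sum_j r^j [p]_j$ holds as a finite sum; for general $a$, approximating by polynomials $p_k\to a$ in norm we have $\Psi_r(p_k)\to\Psi_r(a)=a_r$ and $\sum_j r^j[p_k]_j\to\sum_j r^j[a]_j$, yielding $a_r=\sum_{j\ge 0} r^j[a]_j$. The limit assertion $a_r\to a$ as $r\to 1^-$ is immediate on polynomials (finite sums), and extends to all $a$ by another $\varepsilon/3$ argument using density of polynomials and the contractivity $\|\Psi_r\|\le 1$. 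The principal conceptual step—and the place where one must be careful—is the identification $[a]_j\in V_j$ for all $a$, which hinges on the finite-dimensionality of $V_j$ together with the continuity properties of the gauge action; everything else is then either linear algebra or routine $\varepsilon/3$ reasoning.
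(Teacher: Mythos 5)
Your proof is correct and follows essentially the same outline as the paper's: define the homogeneous components $\left[ a\right] _{j}$, establish $\left\Vert \left[ a\right] _{j}\right\Vert \leq \left\Vert a\right\Vert $, sum the resulting geometric series to obtain $a_{r}$, and pass to the radial limit by an $\varepsilon /3$ argument over the dense set of polynomials. The only real difference is that you spell out the ``standard Fourier-type argument'' (averaging against characters over the gauge automorphisms $\Psi _{e^{i\theta }}$, plus the finite-dimensionality of each homogeneous subspace) that the paper merely invokes, whereas the paper reads the coefficients off $a\delta _{0}$ and notes that the norm bound also follows from the explicit formula of Lemma~\ref{NormLemma2}.
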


\begin{proof}
Let $a\in \mathcal{A}\left( \mathcal{D}_{f}\right) $ be fixed. By Definition
(\ref{NCDomainAlgDef}), the bounded linear operator $a$ acts on the Fock
space $\ell ^{2}\left( \mathbb{F}_{n}^{+}\right) $. In particular, there
exists a collection $\left( c_{\alpha }\right) _{\alpha \in \mathbb{F}%
_{n}^{+}}$ of complex numbers such that:%
\begin{equation*}
a\left( \delta _{0}\right) =\sum_{\alpha \in \mathbb{F}_{n}^{+}}c_{\alpha }%
\sqrt[2]{b_{\alpha }}\delta _{\alpha }
\end{equation*}%
where $\left\{ \delta _{\alpha }:\alpha \in \mathbb{F}_{n}^{+}\right\} $ is
the usual Hilbert basis of $\ell ^{2}\left( \mathbb{F}_{n}^{+}\right) $. We
now wish to prove that $a$ is the radial limit of $\sum_{j\in \mathbb{N}}%
\left[ a\right] _{j}$ in the sense described above. A standard Fourier-type
argument shows that we have for all $j\in \mathbb{N}$:%
\begin{equation}
\left\Vert \left[ a\right] _{j}\right\Vert \leq \left\Vert a\right\Vert 
\text{.}  \label{HoloIneq1}
\end{equation}%
Note that Inequality (\ref{HoloIneq1}) also follows from Lemma (\ref%
{NormLemma2}) later in this paper, and that lemma is independent from the
one we are now proving.

We conclude from Inequality (\ref{HoloIneq1}) that if $r\in (0,1)$ then $%
\sum_{j\in \mathbb{N}}r^{j}\left[ a\right] _{j}$ converges in norm in $%
\mathcal{A}\left( \mathcal{D}_{f}\right) $, and we denote this limit $a_{r}$%
. Using Theorem (\ref{Popescu2}), we also see that we must have:%
\begin{equation*}
a_{r}=\left\langle \left( rW_{1},\ldots ,rW_{n}\right) ,a\right\rangle \text{%
.}
\end{equation*}%
In particular, we note that $\left\Vert a_{r}\right\Vert \leq \left\Vert
a\right\Vert $ for all $r\in \left( 0,1\right) $. Last, if $a$ is a finite
linear combination of the operators $W_{\alpha }^{f}$ ($\alpha \in \mathbb{F}%
_{n}^{+}$) then clearly $\lim_{r\rightarrow 1^{-}}a_{r}=a$. In general, for
any $a$ and any $\varepsilon >0$ we can find a finite linear combination $p$
of the operators $W_{\alpha }^{f}$ ($\alpha \in \mathbb{F}_{n}^{+}$) such
that $\left\Vert a-p\right\Vert <\frac{1}{3}\varepsilon $. It is then
immediate that $\left\Vert a_{r}-p_{r}\right\Vert <\frac{1}{3}\varepsilon $
as well. Moreover, using continuity in $r$ there exists some $R\in \left(
0,1\right) $ such that $\left\Vert p-p_{r}\right\Vert <\frac{1}{3}%
\varepsilon $ for $r\in \left( R,1\right) $. Hence $\lim_{r\rightarrow
1^{-}}\left\Vert a-a_{r}\right\Vert =0$ as desired.
\end{proof}

As an observation and using the notation of Lemma (\ref{radialCv}), we note
that if $T$ is in $\mathcal{D}_{f}\left( \mathbb{C}^{k}\right) $ for some $%
k\in \mathbb{N}^{\ast }$ then the convergence of $a\left( T\right)
=\left\langle T,a\right\rangle =\sum_{j=0}^{\infty }\left( \sum_{\substack{ %
\alpha \in \mathbb{F}_{n}^{+}  \\ \left\vert \alpha \right\vert =j}}%
c_{\alpha }T_{\alpha }\right) $ is understood in the following sense:%
\begin{equation}
a\left( T_{1},\ldots ,T_{n}\right) =\left\langle T,a\right\rangle
=\lim_{r\rightarrow 1^{-}}\sum_{j=0}^{\infty }r^{j}\left( \sum_{\substack{ %
\alpha \in \mathbb{F}_{n}^{+}  \\ \left\vert \alpha \right\vert =j}}%
c_{\alpha }T_{\alpha }\right) \text{.}  \label{radial-convergence-of-T}
\end{equation}

In general, we will write that $a\in \mathcal{A}\left( \mathcal{D}%
_{f}\right) $ is the sum of the series $\left( \sum c_{\alpha }W_{\alpha
}\right) _{\alpha \in \mathbb{F}_{n}^{+}}$, or simply $a=\sum_{\alpha \in 
\mathbb{F}_{n}^{+}}c_{\alpha }W_{\alpha }$, where convergence is understood
as in Lemma (\ref{radialCv}).

\begin{lemma}[Popescu, \protect\cite{Po-domain}]
\label{Holo}Let $a\in \mathcal{A}\left( \mathcal{D}_{f}\right) $ for some
positive regular free formal series $f$. Then for all $k\in \mathbb{N}^{\ast
}$ the function:%
\begin{equation*}
T\in \mathcal{D}_{f}\left( \mathbb{C}^{k}\right) \mapsto a\left( T\right)
=\left\langle T,a\right\rangle _{k}\in M_{k\times k}
\end{equation*}%
is continuous on $\mathcal{D}_{f}\left( \mathbb{C}^{k}\right) $ and
holomorphic on the interior of $\mathcal{D}_{f}\left( \mathbb{C}^{k}\right) $%
.
\end{lemma}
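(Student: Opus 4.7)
The plan is to approximate $a$ in $\mathcal{A}\left( \mathcal{D}_{f}\right) $ by the radial truncations provided by Lemma \ref{radialCv}, and transfer this approximation to the evaluation map $T\mapsto a(T)$ via the complete contractivity of Corollary \ref{corollary-representations}. For each $r\in \left( 0,1\right) $, set $a_{r}=\sum_{j=0}^{\infty }r^{j}\left[ a\right] _{j}$, a norm-convergent series in $\mathcal{A}\left( \mathcal{D}_{f}\right) $ with $a_{r}\to a$ in norm as $r\to 1^{-}$. For any $T\in \mathcal{D}_{f}\left( \mathbb{C}^{k}\right) $, Corollary \ref{corollary-representations} supplies a completely contractive unital morphism $\Phi _{T}=\left\langle T,\cdot \right\rangle _{k}:\mathcal{A}\left( \mathcal{D}_{f}\right) \longrightarrow M_{k\times k}$, and continuity of $\Phi _{T}$ yields
\[
a_{r}(T)=\sum_{j=0}^{\infty }r^{j}\left[ a\right] _{j}(T),
\]
where each $\left[ a\right] _{j}(T)=\sum_{\left\vert \alpha \right\vert =j}c_{\alpha }T_{\alpha }$ is a polynomial in the matrix entries of $T_{1},\ldots ,T_{n}$, hence entire on $M_{k\times k}^{n}$.

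I would then show that each $a_{r}$, viewed as a function of $T$, is continuous on $\mathcal{D}_{f}\left( \mathbb{C}^{k}\right) $ and holomorphic on its interior. Indeed, contractivity of $\Phi _{T}$ combined with the bound $\left\Vert \left[ a\right] _{j}\right\Vert \leq \left\Vert a\right\Vert $ from Lemma \ref{radialCv} gives $\left\Vert \left[ a\right] _{j}(T)\right\Vert \leq \left\Vert a\right\Vert $ uniformly in $T$, so the series $\sum_{j}r^{j}\left[ a\right] _{j}(T)$ is dominated termwise by the geometric $r^{j}\left\Vert a\right\Vert $. The Weierstrass $M$-test gives uniform convergence on $\mathcal{D}_{f}\left( \mathbb{C}^{k}\right) $, and each partial sum is a polynomial in $T$, so the limit $a_{r}$ is continuous on the closure and holomorphic on the interior.

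To conclude, contractivity yields $\left\Vert a(T)-a_{r}(T)\right\Vert \leq \left\Vert a-a_{r}\right\Vert $, so $a_{r}\to a$ uniformly on $\mathcal{D}_{f}\left( \mathbb{C}^{k}\right) $. A uniform limit of continuous functions is continuous, which gives the continuity of $a$ on the closed domain, and the several-variable Weierstrass theorem (locally uniform limits of holomorphic maps are holomorphic) gives holomorphy of $a$ on the interior. The main delicate point, which this layered structure is designed to handle, is juggling two limits: the $\mathcal{A}\left( \mathcal{D}_{f}\right) $-norm limit $r\to 1^{-}$ from Lemma \ref{radialCv}, and the summation over $j$. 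The series $\sum_{j}\left[ a\right] _{j}(T)$ cannot be expected to converge at $r=1$ (only the radial averages do), so the geometric damping $r^{j}$ is essential for the $M$-test, and one can take $r\to 1^{-}$ only after the holomorphy of each $a_{r}$ has been established.
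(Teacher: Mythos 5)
Your proposal is correct, and its continuity half is essentially the paper's argument: both reduce to the fact that $\left\langle T,\cdot \right\rangle _{k}$ is contractive, so uniform-in-$T$ approximation of $a$ by polynomials in the $W_{\alpha }$ (your $a_{r}$, the paper's finite linear combination $p$ with $\left\Vert a-p\right\Vert <\varepsilon /3$) transfers to uniform approximation of $T\mapsto a\left( T\right) $ by continuous polynomial maps. The holomorphy halves genuinely differ. The paper fixes a compact $K$ in the interior, dilates it to $\left( 1+\tau \right) K$ still inside the interior, and uses $\left\langle \left( 1+\tau \right) T,\left[ a\right] _{j}\right\rangle =\left( 1+\tau \right) ^{j}\left\langle T,\left[ a\right] _{j}\right\rangle $ together with $\left\Vert \left[ a\right] _{j}\right\Vert \leq \left\Vert a\right\Vert $ to get $\left\Vert \left\langle T,\left[ a\right] _{j}\right\rangle \right\Vert \leq \left( 1+\tau \right) ^{-j}\left\Vert a\right\Vert $ on $K$; thus the \emph{undamped} series $\sum_{j}\left\langle T,\left[ a\right] _{j}\right\rangle $ converges uniformly on compacts of the interior and holomorphy follows with a single limit. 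You instead keep the radial damping $r^{j}$, obtain global uniform convergence of $\sum_{j}r^{j}\left[ a\right] _{j}\left( T\right) $ for each fixed $r<1$ by the $M$-test, and then pass to the limit $r\rightarrow 1^{-}$ using the several-variables Weierstrass convergence theorem. Both routes are valid, and yours has the aesthetic advantage of treating continuity and holomorphy with one piece of machinery; what it gives up is the by-product the paper records immediately after the lemma, namely that for $T$ in the interior the series $\sum_{j}\left( \sum_{\left\vert \alpha \right\vert =j}c_{\alpha }T_{\alpha }\right) $ converges in norm without radial damping. Your closing remark that this undamped series ``cannot be expected to converge'' is accurate only on the boundary of $\mathcal{D}_{f}\left( \mathbb{C}^{k}\right) $; on the interior the dilation argument shows it does converge, locally uniformly and geometrically.
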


\begin{proof}
Fix $a\in \mathcal{A}\left( \mathcal{D}_{f}\right) $ and $k\in \mathbb{N}%
^{\ast }$, and endow $\mathbb{C}^{nk^{2}}$ and $\mathbb{C}^{k^{2}}$ with an
arbitrary norm. The continuity of $\left\langle \cdot ,a\right\rangle _{k}$
is addressed first. Let $\varepsilon >0$ and $T\in \mathcal{D}_{f}\left( 
\mathbb{C}^{k}\right) $. Let $p\in \mathcal{A}\left( \mathcal{D}_{f}\right) $
be a finite linear combinations of the operators $W_{\alpha }$ for some
finite subset of indices $\alpha $ in $\mathbb{F}_{n}^{+}$ such that $%
\left\Vert a-p\right\Vert <\frac{1}{3}\varepsilon $. The map $\left\langle
\cdot ,p\right\rangle $, as seen as a function from $\mathbb{C}^{nk^{2}}$
into $\mathbb{C}^{k^{2}}$, is a $k^{2}$-tuple of polynomials in $nk^{2}$
variables, and thus is continuous on $\mathcal{D}_{f}\left( \mathbb{C}%
^{k}\right) $. Therefore there exists $\delta >0$ such that if $T^{\prime
}\in \mathcal{D}_{f}\left( \mathbb{C}^{k}\right) $ and $\left\Vert
T-T^{\prime }\right\Vert <\delta $ then $\left\Vert p\left( T\right)
-p\left( T^{\prime }\right) \right\Vert <\frac{1}{3}\varepsilon $. We
conclude:%
\begin{eqnarray*}
\left\Vert a\left( T\right) -a\left( T^{\prime }\right) \right\Vert &\leq
&\left\Vert a\left( T\right) -p\left( T\right) \right\Vert +\left\Vert
p\left( T\right) -p\left( T^{\prime }\right) \right\Vert +\left\Vert p\left(
T^{\prime }\right) -a\left( T^{\prime }\right) \right\Vert \\
&<&\frac{1}{3}\left( \varepsilon +\varepsilon +\varepsilon \right)
=\varepsilon \text{.}
\end{eqnarray*}%
This concludes the proof of continuity of $\left\langle \cdot
,a\right\rangle _{k}$ on $\mathcal{D}_{f}\left( \mathbb{C}^{k}\right) $.

We now turn to the question of holomorphy. Let $K$ be an arbitrary compact
subset of the interior of $\mathcal{D}_{f}\left( \mathbb{C}^{k}\right) $.
Let $\tau >0$ be chosen so that $\left( 1+\tau \right) K$ is also in the
interior of $\mathcal{D}_{f}\left( \mathbb{C}^{k}\right) $. Then, using
Lemma (\ref{radialCv}), we have for every $T\in K$ and $j\in \mathbb{N}$
that since $\left\langle \left( 1+\tau \right) T,\left[ a\right]
_{j}\right\rangle =\left( 1+\tau \right) ^{j}\left\langle T,\left[ a\right]
_{j}\right\rangle $ by definition:%
\begin{eqnarray}
\left\Vert \left\langle T,\left[ a\right] _{j}\right\rangle \right\Vert &=&%
\frac{1}{\left( 1+\tau \right) ^{j}}\left\Vert \left( 1+\tau \right)
^{j}\left\langle T,\left[ a\right] _{j}\right\rangle \right\Vert
\label{inquality-interior} \\
&=&\frac{1}{\left( 1+\tau \right) ^{j}}\left\Vert \left\langle \left( 1+\tau
\right) T,\left[ a\right] _{j}\right\rangle \right\Vert  \notag \\
&\leq &\frac{1}{\left( 1+\tau \right) ^{j}}\left\Vert \left[ a\right]
_{j}\right\Vert \leq \frac{1}{\left( 1+\tau \right) ^{j}}\left\Vert
a\right\Vert \text{.}  \notag
\end{eqnarray}%
Hence for any $N\in \mathbb{N}$:%
\begin{equation*}
\left\Vert \left\langle T,a\right\rangle -\left\langle T,\sum_{j=0}^{N}\left[
a\right] _{j}\right\rangle \right\Vert \leq \left\Vert a\right\Vert
\sum_{j=N+1}^{\infty }\frac{1}{\left( 1+\tau \right) ^{j}}\underset{%
N\rightarrow \infty }{\longrightarrow }0\text{.}
\end{equation*}%
Hence $\left\langle \cdot ,a\right\rangle $ is the uniform limit of the
functions $p_{N}=\left\langle \cdot ,\sum_{j=0}^{N}\left[ a\right]
_{j}\right\rangle $. These functions are given as $k^{2}$ polynomials in $%
nk^{2}$ variables, and are thus holomorphic the interior of $\mathcal{D}%
_{f}\left( \mathbb{C}^{k}\right) $. Hence $\left\langle \cdot
,a\right\rangle $ is holomorphic on the interior of $\mathcal{D}_{f}\left( 
\mathbb{C}^{k}\right) $.
\end{proof}

\bigskip

Using Inequality (\ref{inquality-interior}) and the notation of Lemma (\ref%
{radialCv}), we note that if $T\ $is in the interior of $\mathcal{D}%
_{f}\left( \mathbb{C}^{k}\right) $ for some $k\in \mathbb{N}^{\ast }$ then
we in fact have the norm convergence in $%
\mathbb{C}
^{k^{2}}$:%
\begin{equation*}
a\left( T_{1},\ldots ,T_{n}\right) =\left\langle T,a\right\rangle
=\sum_{j=0}^{\infty }\left( \sum_{\substack{ \alpha \in \mathbb{F}_{n}^{+} 
\\ \left\vert \alpha \right\vert =j}}c_{\alpha }T_{\alpha }\right) \text{.}
\end{equation*}

\bigskip We are now ready to deduce our main theorem:

\begin{theorem}
\label{Functor}Let $f,g$ be two positive regular free formal power series,
and let $n$ be the number of indeterminates of $f$. Let $\Phi :\mathcal{A}%
\left( \mathcal{D}_{f}\right) \mapsto \mathcal{A}\left( \mathcal{D}%
_{g}\right) $ be an isomorphism in \textrm{NCD}. Then for all $k\in \mathbb{N%
}^{\ast }$ there exists a biholomorphic function $\widehat{\Phi }_{k}:%
\mathcal{D}_{g}\left( \mathbb{C}^{k}\right) \longrightarrow \mathcal{D}%
_{f}\left( \mathbb{C}^{k}\right) $ such that for any $T\in \mathcal{D}%
_{g}\left( \mathbb{C}^{k}\right) $ we have:%
\begin{equation*}
\left\langle T,\Phi \left( \cdot \right) \right\rangle _{g}^{k}=\left\langle 
\widehat{\Phi }_{k}\left( T\right) ,\cdot \right\rangle _{f}^{k}\text{.}
\end{equation*}%
for $i=1,\ldots ,n$.

Moreover, for all $k\in \mathbb{N}^{\ast }$ the function $\widehat{\Phi }%
_{k} $ maps the interior of $\mathcal{D}_{g}\left( \mathbb{C}^{k}\right) $
onto the interior of $\mathcal{D}_{f}\left( \mathbb{C}^{k}\right) $, where
the interior is computed in the topology of $\mathbb{C}^{nk^{2}}$.

For $k\in \mathbb{N}^{\ast }$ we can thus define a contravariant functor $%
\mathbb{D}^{k}$ from \textrm{NCD} into \textrm{HD}$_{nk^{2}}$ where, for any
object $\mathcal{A}\left( \mathcal{D}_{f}\right) $ and isomorphism $\Phi $
in \textrm{NCD} in $\mathbb{D}^{k}\left( \mathcal{A}\left( \mathcal{D}%
_{f}\right) \right) $ is the interior of $\mathcal{D}_{f}\left( \mathbb{C}%
^{k}\right) $ and $\mathbb{D}^{k}\left( \Phi \right) =\widehat{\Phi }_{k}$.
\end{theorem}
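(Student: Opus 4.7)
The plan is to realize $\widehat{\Phi}_k$ as the dual of $\Phi$ under the pairing of Corollary \ref{corollary-representations}. Given $T\in\mathcal{D}_g(\mathbb{C}^k)$, the map $\langle T,\cdot\rangle_g^k:\mathcal{A}(\mathcal{D}_g)\longrightarrow M_{k\times k}$ is a completely contractive unital algebra morphism, so precomposing with $\Phi$ (which, being a morphism of \textrm{NCD}, is completely isometric and unital) yields another completely contractive unital morphism $\langle T,\Phi(\cdot)\rangle_g^k:\mathcal{A}(\mathcal{D}_f)\longrightarrow M_{k\times k}$. Evaluating this at the generators $W_1^f,\ldots,W_n^f$ gives an $n$-tuple in $M_{k\times k}^n$, and the converse direction of Corollary \ref{corollary-representations} forces this tuple to belong to $\mathcal{D}_f(\mathbb{C}^k)$. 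I would therefore set
\begin{equation*}
\widehat{\Phi}_k(T)\;=\;\bigl(\langle T,\Phi(W_1^f)\rangle_g^k,\ldots,\langle T,\Phi(W_n^f)\rangle_g^k\bigr).
\end{equation*}
The uniqueness part of Corollary \ref{corollary-representations} then automatically gives the intertwining identity $\langle T,\Phi(\cdot)\rangle_g^k=\langle\widehat{\Phi}_k(T),\cdot\rangle_f^k$, since both sides are completely contractive unital morphisms $\mathcal{A}(\mathcal{D}_f)\to M_{k\times k}$ that agree on the generators.

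Next I would establish the analytic regularity of $\widehat{\Phi}_k$. Each component $T\mapsto\langle T,\Phi(W_i^f)\rangle_g^k$ is of the form covered by Lemma \ref{Holo}, hence continuous on $\mathcal{D}_g(\mathbb{C}^k)$ and holomorphic on its interior. Applying the same construction to $\Phi^{-1}$ produces a continuous map $\widehat{\Phi^{-1}}_k:\mathcal{D}_f(\mathbb{C}^k)\to\mathcal{D}_g(\mathbb{C}^k)$, holomorphic on the interior. To check that these are mutually inverse, I would compute, for $T\in\mathcal{D}_g(\mathbb{C}^k)$ and any $a\in\mathcal{A}(\mathcal{D}_g)$,
\begin{equation*}
\langle\widehat{\Phi^{-1}}_k\circ\widehat{\Phi}_k(T),a\rangle_g^k=\langle\widehat{\Phi}_k(T),\Phi^{-1}(a)\rangle_f^k=\langle T,\Phi\circ\Phi^{-1}(a)\rangle_g^k=\langle T,a\rangle_g^k,
\end{equation*}
and appeal once more to the uniqueness in Corollary \ref{corollary-representations} to conclude $\widehat{\Phi^{-1}}_k\circ\widehat{\Phi}_k=\mathrm{Id}$; the other direction is symmetric.

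It remains to show that $\widehat{\Phi}_k$ sends the interior of $\mathcal{D}_g(\mathbb{C}^k)$ onto the interior of $\mathcal{D}_f(\mathbb{C}^k)$, the subtlety being that these interiors are taken inside $\mathbb{C}^{nk^2}$ and $\widehat{\Phi}_k$ is a priori only known to be a continuous bijection between the closed sets. The cleanest route is Brouwer's invariance of domain: since $\widehat{\Phi}_k$ is a continuous injection from an open subset $U$ of $\mathbb{C}^{nk^2}\simeq\mathbb{R}^{2nk^2}$ (namely a small open neighborhood, in $\mathbb{C}^{nk^2}$, of an interior point of $\mathcal{D}_g(\mathbb{C}^k)$) into the same Euclidean space, its image is open; hence the image of an interior point is an interior point of $\mathcal{D}_f(\mathbb{C}^k)$. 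Applying the same reasoning to $\widehat{\Phi^{-1}}_k$ yields the reverse inclusion, so $\widehat{\Phi}_k$ restricts to a homeomorphism between the interiors, and its components are holomorphic there by Lemma \ref{Holo}, making it biholomorphic.

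The functoriality claim follows almost tautologically from the intertwining identity: if $\Phi:\mathcal{A}(\mathcal{D}_f)\to\mathcal{A}(\mathcal{D}_g)$ and $\Psi:\mathcal{A}(\mathcal{D}_g)\to\mathcal{A}(\mathcal{D}_h)$ are morphisms of \textrm{NCD}, then for $T\in\mathcal{D}_h(\mathbb{C}^k)$ the two maps $\widehat{\Phi}_k\circ\widehat{\Psi}_k(T)$ and $\widehat{\Psi\circ\Phi}_k(T)$ both represent the morphism $\langle T,\Psi\circ\Phi(\cdot)\rangle_h^k$ in the sense of Corollary \ref{corollary-representations}, and uniqueness identifies them; similarly $\widehat{\mathrm{Id}}_k=\mathrm{Id}$. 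I expect the main conceptual obstacle to be the interior-preservation step, since it is the only place where a nontrivial topological fact (invariance of domain) enters rather than the pure algebraic duality of Corollary \ref{corollary-representations} and Lemma \ref{Holo}.
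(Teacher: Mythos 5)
Your proposal is correct and follows essentially the same route as the paper: the paper defines $\widehat{\Phi}_k(T)$ by writing out the series $\left(\sum_{\alpha}c_{i,\alpha}T_{\alpha}\right)_{i\leq n}$ for $\Phi(W_i^f)=\sum_{\alpha}c_{i,\alpha}W_{\alpha}^g$ (which is exactly your $\langle T,\Phi(W_i^f)\rangle_g^k$), invokes Lemma \ref{Holo} for continuity and holomorphy, uses $\Phi^{-1}$ for the inverse map, and applies invariance of domain for the interior-preservation step. The only point you omit is the verification that the interior of $\mathcal{D}_f\left(\mathbb{C}^k\right)$ is connected (needed for it to be an object of \textrm{HD}$_{nk^2}$), which the paper handles by noting that $\left(tT_1,\ldots,tT_n\right)\in\mathcal{D}_f\left(\mathbb{C}^k\right)$ for $t\in[0,1]$, so the set is star-shaped about $0$.
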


\begin{proof}
Let $k\in \mathbb{N}^{\ast }$ and let $T\in \mathcal{D}_{g}\left( \mathbb{C}%
^{k}\right) $. It follows from the proof of Theorem (\ref{Popescu2}) that:%
\begin{equation*}
\widehat{\Phi _{k}}(T)=\left( \sum_{\alpha \in \mathbb{F}_{n}^{+}}c_{1,%
\alpha }T_{\alpha },\ldots ,\sum_{\alpha \in \mathbb{F}_{n}^{+}}c_{n,\alpha
}T_{\alpha }\right)
\end{equation*}%
where the convergence of each entry is understood in the sense of (\ref%
{radial-convergence-of-T}), and where $\Phi (W_{i}^{f})=\sum_{\alpha \in 
\mathbb{F}_{n}^{+}}c_{i,\alpha }W_{\alpha }^{g}$ for $i\leq n$.

By Lemma (\ref{Holo}), the function $T\in \mathcal{D}_{g}\left( \mathbb{C}%
^{k}\right) \mapsto \left\langle T,\Phi _{k}\left( W_{i}^{f}\right)
\right\rangle $ is continuous on $\mathcal{D}_{g}\left( \mathbb{C}%
^{k}\right) $ and holomorphic on the interior of $\mathcal{D}_{g}\left( 
\mathbb{C}^{k}\right) $ for all $i=1,\ldots ,n$. Therefore, $\widehat{\Phi
_{k}}$ is continuous on $\mathcal{D}_{g}\left( \mathbb{C}^{k}\right) $ and
holomorphic on the interior of $\mathcal{D}_{g}\left( \mathbb{C}^{k}\right)
. $ Moreover, since $\Phi ^{-1}:\mathcal{A}\left( \mathcal{D}_{g}\right)
\rightarrow \mathcal{A}\left( \mathcal{D}_{f}\right) $ is also an
isomorphism in \textrm{NCD}, we deduce the same properties for $\widehat{%
\Phi _{k}^{-1}}:\mathcal{D}_{f}\left( \mathbb{C}^{k}\right) \rightarrow 
\mathcal{D}_{g}\left( \mathbb{C}^{k}\right) .$

Last, let $T\in \mathcal{D}_{g}\left( \mathbb{C}^{k}\right) $ be an interior
point of $\mathcal{D}_{g}\left( \mathbb{C}^{k}\right) $ in the topology of $%
\mathbb{C}^{k^{2}n}$ and let $U\subseteq \mathcal{D}_{g}\left( \mathbb{C}%
^{k}\right) $ be an open set in $\mathbb{C}^{nk^{2}}$ with $T\in U$. By
assumption, $\widehat{\Phi }_{k}$ is an homeomorphism from $\mathcal{D}%
_{g}\left( \mathbb{C}^{k}\right) $ onto $\mathcal{D}_{f}\left( \mathbb{C}%
^{k}\right) $ (in their relative topology). In particular, identifying $%
\mathbb{C}^{nk^{2}}$ with $\mathbb{R}^{2nk^{2}}$ (via the canonical linear
isomorphism), $\widehat{\Phi }_{k}$ restricted to $U\subseteq \mathbb{R}%
^{2nk^{2}}$ is a continuous injective $\mathbb{R}^{2nk^{2}}$-valued
function, so by the invariance of domain principle \cite[Theorem 16, page 199%
]{spa}, we conclude that $\widehat{\Phi }_{k}(U)$ is open in $\mathbb{R}%
^{2nk^{2}}$ since $U$ is open in $\mathbb{R}^{2nk^{2}}$. Hence since $%
\widehat{\Phi }_{k}(T)\in \widehat{\Phi }_{k}(U)\subseteq \mathcal{D}%
_{f}\left( \mathbb{C}^{k}\right) $, we conclude that $\widehat{\Phi }%
_{k}\left( T\right) $ is indeed an interior point of $\mathcal{D}_{f}\left( 
\mathbb{C}^{k}\right) $.

Last, if $\left( T_{1},\ldots ,T_{n}\right) \in \mathcal{D}_{f}\left( 
\mathbb{C}^{k}\right) $ then so does $\left( tT_{1},\ldots ,tT_{n}\right) $
for $t\in \lbrack 0,1]$ so every point in $\mathcal{D}_{f}\left( \mathbb{C}%
^{k}\right) $ is path connected to $0$. Hence $\mathcal{D}_{f}\left( \mathbb{%
C}^{k}\right) $ is connected. Therefore the interior $\mathbb{D}^{k}\left( 
\mathcal{A}\left( \mathcal{D}_{f}\right) \right) $ of $\mathcal{D}_{f}\left( 
\mathbb{C}^{k}\right) $ is an object in \textrm{HD}$_{nk^{2}}$. It is then
easy to check that $\mathbb{D}^{k}$ defines a contravariant functor.
\end{proof}

\begin{remark}
\label{Functor1}When $k=1$, we do not need to assume that $\Phi $ is
completely isometric. Indeed, if $\Phi $ is a continuous unital algebra
isomorphism, then for any $\chi \in \mathcal{D}_{g}\left( \mathbb{C}\right) $
the map $\left\langle \chi ,\cdot \right\rangle \circ \Phi $ is continuous
and scalar valued, hence completely contractive, so the proof of Theorem\ (%
\ref{Functor}) applies.
\end{remark}

\begin{remark}
It is worth noticing that if the interiors of $\mathcal{D}_{f}\left( \mathbb{%
C}\right) $ and $\mathcal{D}_{g}\left( \mathbb{C}\right) $ are not
biholomorphically equivalent, then we can already conclude that there is no
bounded isomorphism between $\mathcal{A}\left( \mathcal{D}_{f}\right) $ and $%
\mathcal{A}\left( \mathcal{D}_{g}\right) $ --- not just no completely
isometric isomorphism.
\end{remark}

We introduce a notation to ease the presentation in this paper. Given an
object $\mathcal{A}\left( \mathcal{D}_{f}\right) $ in \textrm{NCD} we denote
the domain $\mathbb{D}^{k}\left( \mathcal{A}\left( \mathcal{D}_{f}\right)
\right) $ simply by $\mathbb{D}_{f}^{k}$. It is also worth pointing out a
terminology conflict between the literature in complex analysis and the
literature on non selfadjoint operator algebras. A domain in complex
analysis usually refers to a connected open subset of some Hermitian space,
while it is common in operator theory to call $\mathcal{D}_{f}\left( 
\mathcal{H}\right) $ domains for any Hilbert space $\mathcal{H}$, even
though the sets $\mathcal{D}_{f}\left( \mathcal{H}\right) $ are closed and
connected. We hope that the definition of the functors $\mathbb{D}^{k}$
clarify this point and establishes the bridge not only formally, but from a
lexical point of view as well.

Thus, the functors $\mathbb{D}^{k}$ ($k\in \mathbb{N}^{\ast }$) define a new
family of invariants for the noncommutative domains. We are thus led to
study the geometry of the sets $\mathbb{D}_{f}^{k}$ (seen as objects in 
\textrm{HD}$_{nk^{2}}$)\ in order to classify objects in \textrm{NCD}.

\subsection{The invariant $\mathbb{D}^{1}$}

As observed in Theorem(\ref{Functor})\ and Remark (\ref{Functor1}), the
biholomorphic equivalence of $\mathbb{D}_{f}^{1}$ is an isomorphism
invariant for the noncommutative domain $\mathcal{A}\left( \mathcal{D}%
_{f}\right) $ for any positive regular free formal series $f$. We thus can
provide a first easy partial classification result for the algebras $%
\mathcal{A}\left( \mathcal{D}_{f}\right) $ by studying the geometry of $%
\mathbb{D}_{f}^{1}$. We start by noticing that they have many symmetries, in
the following sense:

\begin{definition}
Let $D\subseteq \mathbb{C}^{m}$ be an open connected set. Then $D$ is a
Reinhardt domain if for all $\omega \in \mathbb{T}^{m}$ and $z\in D$ we have 
$\omega \cdot z=\left( \omega _{1}z_{1},\ldots ,\omega _{m}z_{m}\right) \in
D $.
\end{definition}

\begin{proposition}
Let $f$ be a positive regular free series. Then $\mathbb{D}_{f}^{1}$ is a
Reinhardt domain.
\end{proposition}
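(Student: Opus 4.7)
The plan is to unpack the defining inequality of $\mathcal{D}_{f}\left(\mathbb{C}\right)$ at scalars, show that it depends only on the moduli of the coordinates, and then argue connectedness of its interior by star-shapedness at $0$.

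First, I would observe that for $z=\left(z_{1},\ldots,z_{n}\right)\in\mathbb{C}^{n}$, each $z_{i}$ commutes, so $z_{\alpha}=z_{i_{1}}\cdots z_{i_{\left\vert \alpha\right\vert }}$ and $z_{\alpha}z_{\alpha}^{\ast}=\left\vert z_{\alpha}\right\vert ^{2}$, which depends only on $\left\vert z_{1}\right\vert ,\ldots,\left\vert z_{n}\right\vert $. Hence
\begin{equation*}
\mathcal{D}_{f}\left(\mathbb{C}\right)=\left\{ z\in\mathbb{C}^{n}:\sum_{\alpha\in\mathbb{F}_{n}^{+}}a_{\alpha}\left\vert z_{\alpha}\right\vert ^{2}\leq1\right\} .
\end{equation*}
For any $\omega=\left(\omega_{1},\ldots,\omega_{n}\right)\in\mathbb{T}^{n}$ and $z\in\mathbb{C}^{n}$ we have $\left\vert \left(\omega\cdot z\right)_{\alpha}\right\vert =\left\vert z_{\alpha}\right\vert $, so $z\in\mathcal{D}_{f}\left(\mathbb{C}\right)$ if and only if $\omega\cdot z\in\mathcal{D}_{f}\left(\mathbb{C}\right)$.

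Next, I would identify the interior. The function $F\left(z\right)=\sum_{\alpha}a_{\alpha}\left\vert z_{\alpha}\right\vert ^{2}$ is continuous where it is finite (in fact it is continuous on $\mathcal{D}_{f}\left(\mathbb{C}\right)$, since the partial sums converge uniformly on compact subsets of the interior via the same estimate used in Lemma \ref{Holo}). Hence the interior $\mathbb{D}_{f}^{1}$ of $\mathcal{D}_{f}\left(\mathbb{C}\right)$ in $\mathbb{C}^{n}$ contains $\left\{ z:F\left(z\right)<1\right\} $; conversely a point with $F\left(z\right)=1$ cannot be interior because arbitrarily small outward radial perturbations $\left(1+\varepsilon\right)z$ make $F>1$ (as $F$ is monotone in the moduli, with $a_{g_{i}}>0$ forcing at least one coordinate to contribute). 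Since the set $\left\{ z:F\left(z\right)<1\right\} $ is manifestly invariant under the coordinatewise $\mathbb{T}^{n}$-action, so is $\mathbb{D}_{f}^{1}$.

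To finish I would verify that $\mathbb{D}_{f}^{1}$ is open and connected. Openness is immediate from the description $\left\{ F<1\right\} $. For connectedness, $0\in\mathbb{D}_{f}^{1}$ (since $a_{e}=0$ gives $F\left(0\right)=0$), and for any $z\in\mathbb{D}_{f}^{1}$ the path $t\mapsto tz$ for $t\in\left[0,1\right]$ satisfies $F\left(tz\right)\leq F\left(z\right)<1$, so the whole segment lies in $\mathbb{D}_{f}^{1}$. Thus $\mathbb{D}_{f}^{1}$ is star-shaped at $0$, hence connected, and therefore a Reinhardt domain.

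The only mildly delicate step is justifying the continuity and monotonicity of $F$ on the closure of the interior so as to identify $\mathbb{D}_{f}^{1}$ with $\left\{ F<1\right\} $; everything else is formal from the definition.
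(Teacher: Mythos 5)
Your central observation---that for scalar tuples the defining condition $\sum_{\alpha }a_{\alpha }\left\vert z_{\alpha }\right\vert ^{2}\leq 1$ depends only on the moduli of the coordinates, hence is invariant under the coordinatewise action of $\mathbb{T}^{n}$---is exactly the paper's proof (and you carry out the full $\mathbb{T}^{n}$ computation, which is what a Reinhardt domain requires, rather than only the single-scalar circular action). The star-shapedness argument for connectedness is likewise the one the paper uses elsewhere. However, the step you yourself flag as delicate, the identification $\mathbb{D}_{f}^{1}=\left\{ F<1\right\} $, is false in general, and it is the inclusion $\left\{ F<1\right\} \subseteq \mathbb{D}_{f}^{1}$ that fails. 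Take $n=1$ and $a_{k}=c/k^{3}$ with $c=1/\left( 2\zeta (3)\right) $: condition (\ref{Condition1}) holds with $M=1$, and $F(z)=\sum_{k}a_{k}\left\vert z\right\vert ^{2k}$ equals $\frac{1}{2}$ at $\left\vert z\right\vert =1$ but is $+\infty $ for $\left\vert z\right\vert >1$, so $\left\{ F<1\right\} $ is the \emph{closed} unit disk while the interior of $\mathcal{D}_{f}\left( \mathbb{C}\right) $ is the open disk. The point is that $F$ is a priori only lower semicontinuous: it can converge with value strictly less than $1$ at a point arbitrarily close to which it diverges, so $\left\{ F<1\right\} $ need not be open and need not lie in the interior. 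Your appeal to the estimate behind Lemma \ref{Holo} does not repair this, since that estimate gives uniform convergence only on compact subsets of the interior, which is circular here; and your other inclusion (that $F(z)=1$ forbids interiority) does not rule out non-interior points with $F(z)<1$.

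Fortunately the flawed step is unnecessary. Each $\omega \in \mathbb{T}^{n}$ acts on $\mathbb{C}^{n}$ as a linear homeomorphism mapping $\mathcal{D}_{f}\left( \mathbb{C}\right) $ onto itself (by your modulus computation), hence mapping its interior $\mathbb{D}_{f}^{1}$ onto itself; no explicit description of the interior is needed. Similarly, for $t\in (0,1]$ the homeomorphism $z\mapsto tz$ maps $\mathcal{D}_{f}\left( \mathbb{C}\right) $ into itself by monotonicity of $F$ in the moduli, hence maps $\mathbb{D}_{f}^{1}$ into $\mathbb{D}_{f}^{1}$; and $0\in \mathbb{D}_{f}^{1}$ because the Cauchy--Schwarz estimate $\sum_{\left\vert \alpha \right\vert =k}a_{\alpha }\left\vert z_{\alpha }\right\vert ^{2}\leq \left( M\sum_{i}\left\vert z_{i}\right\vert ^{4}\right) ^{k/2}$ shows $F$ converges uniformly, hence is continuous, on a neighborhood of $0$ where $F(0)=0$. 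Thus $\mathbb{D}_{f}^{1}$ is star-shaped at $0$, so it is a connected open $\mathbb{T}^{n}$-invariant set, i.e.\ a Reinhardt domain.
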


\begin{proof}
Let $\chi \in \mathbb{D}_{f}^{1}$ and $\omega =\exp \left( i\theta \right)
\in \mathbb{T}$ (where $\theta \in \left[ 0,2\pi \right] $). Then by
definition $\sum_{\alpha \in \mathbb{F}_{n}^{+}}a_{\alpha }\left\vert \chi
_{\alpha }\right\vert ^{2}<1$ so:%
\begin{equation*}
\sum_{\alpha \in \mathbb{F}_{n}^{+}}a_{\alpha }\left\vert \omega
^{\left\vert \alpha \right\vert }\chi _{\alpha }\right\vert
^{2}=\sum_{\alpha \in \mathbb{F}_{n}^{+}}a_{\alpha }\left\vert \chi _{\alpha
}\right\vert ^{2}<1\text{.}
\end{equation*}
\end{proof}

Now, two Reinhardt domains containing $0$ are biholomorphic only under very
rigid conditions, as proven by Sunada in \cite{Sunada}:

\begin{theorem}[Sunada \protect\cite{Sunada}]
\label{Sunada}Let $D$ and $D^{\prime }$ be two Reinhardt domains containing $%
0$. Then $D$ and $D^{\prime }$ are biholomorphic if and only if there exists
a permutation $\sigma $ and scalars $\lambda _{1},\ldots ,\lambda _{n}$ such
that the map:%
\begin{equation*}
\left( z_{1},\ldots ,z_{n}\right) \mapsto \left( \lambda _{1}z_{\sigma
(1)},\ldots ,\lambda _{n}z_{\sigma (n)}\right)
\end{equation*}%
is a biholomorphic map.
\end{theorem}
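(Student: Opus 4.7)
The reverse implication is immediate: every map of the form $\left( z_{1},\ldots ,z_{n}\right) \mapsto \left( \lambda _{1}z_{\sigma (1)},\ldots ,\lambda _{n}z_{\sigma (n)}\right) $ is a polynomial automorphism of $\mathbb{C}^{n}$ with polynomial inverse, and, being a composition of scalings of coordinates with a permutation of coordinates, it sends Reinhardt domains to Reinhardt domains whenever it matches $D$ with $D'$. All the work is in the forward direction, which I would approach in three steps, exploiting at every turn the natural torus action on each side given by $R_{\theta }(z)=\theta \cdot z$ for $\theta \in \mathbb{T}^{n}$.

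The first step is to reduce to the case where the biholomorphism $F\colon D\to D^{\prime }$ fixes the origin. The torus action realizes $\mathbb{T}^{n}$ as a compact subgroup of $\mathrm{Aut}(D^{\prime })$ sitting inside the isotropy group at $0\in D^{\prime }$; a parallel statement holds in $D$. The map $F$ conjugates the torus action on $D$ to a compact subgroup of $\mathrm{Aut}(D^{\prime })$ contained in the isotropy group at $F(0)$. Using that $D^{\prime }$ is bounded and hence its automorphism group is a real Lie group whose isotropy groups are compact, and using conjugacy of maximal compact tori inside such isotropy groups, one can produce an automorphism $\psi$ of $D^{\prime }$ so that $\psi \circ F$ intertwines the standard torus actions on $D$ and $D^{\prime }$. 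This forces $\psi \circ F(0)$ to be a common fixed point of the $\mathbb{T}^{n}$-action on $D^{\prime }$, hence $\psi \circ F(0)=0$. Replacing $F$ by $\psi \circ F$, we may assume $F(0)=0$.

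The second step is to show that such an $F$ must be linear. Since $D$ and $D^{\prime }$ are bounded domains containing $0$, H.\ Cartan's uniqueness theorem applies: any biholomorphism fixing a point is determined by its derivative there. Setting $L=dF_{0}\in \mathrm{GL}_{n}(\mathbb{C})$, and comparing $L^{-1}\circ F$ to the identity on a suitably shrunken Reinhardt neighborhood of $0$ (using the torus-equivariance from Step~1 to control the Taylor expansion), one concludes $F=L$. The third step then pins down $L$. Torus-equivariance $F\circ R_{\theta }=R_{\tau (\theta )}\circ F$ for some continuous group endomorphism $\tau \colon \mathbb{T}^{n}\to \mathbb{T}^{n}$ translates into an intertwining relation $L\circ R_{\theta }=R_{\tau (\theta )}\circ L$ among linear maps. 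Since the weights of the standard diagonal representation of $\mathbb{T}^{n}$ on $\mathbb{C}^{n}$ are pairwise distinct, the linear intertwiner $L$ must carry each weight-$e_{i}$ line to a weight-$e_{\sigma (i)}$ line for some permutation $\sigma $, yielding $L(z)=(\lambda _{1}z_{\sigma (1)},\ldots ,\lambda _{n}z_{\sigma (n)})$.

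The main obstacle is unquestionably Step~1: showing that, after composition with an element of $\mathrm{Aut}(D^{\prime })$, the biholomorphism sends $0$ to $0$. This is the content of Sunada's paper and rests on a nontrivial analysis of the Lie-theoretic structure of $\mathrm{Aut}(D)$ and $\mathrm{Aut}(D^{\prime })$, in particular identifying the coordinate torus as a distinguished maximal torus. Steps~2 and~3 are comparatively standard applications of Cartan's linearization principle and elementary representation theory of the torus.
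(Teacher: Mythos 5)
The paper gives no proof of this statement at all: Theorem \ref{Sunada} is imported verbatim from Sunada's paper \cite{Sunada} and used as a black box (only through Theorem \ref{SunadaApp}). So there is no internal argument to compare yours against; what can be judged is whether your sketch would stand on its own. Your reverse implication and your Steps 2 and 3 are essentially correct: once the biholomorphism $F$ fixes $0$, linearity is exactly the paper's quoted Cartan Lemma (Theorem \ref{Cartan}) applied to Reinhardt (hence circular) domains, and the weight-space bookkeeping for the intertwiner $L$ correctly forces the form $z\mapsto(\lambda_{1}z_{\sigma(1)},\ldots,\lambda_{n}z_{\sigma(n)})$, since the standard characters $e_{1},\ldots,e_{n}$ of $\mathbb{T}^{n}$ are distinct and their weight lines are one-dimensional.

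The genuine gap is Step 1, and your justification of it does not work as written. You invoke ``conjugacy of maximal compact tori inside such isotropy groups,'' but the two $n$-tori in question lie in isotropy subgroups at \emph{different} points ($0$ and $F(0)$), so no conjugation inside a single isotropy group can relate them. What is actually needed is conjugacy inside the full group $\operatorname{Aut}(D^{\prime})$, which in turn requires: (a) boundedness of the domains --- a hypothesis absent from the statement as printed here but present in Sunada's theorem and satisfied in all of the paper's applications --- so that $\operatorname{Aut}(D^{\prime})$ is a Lie group acting properly and Cartan's theorems apply at all; (b) the existence and conjugacy of maximal compact subgroups of $\operatorname{Aut}(D^{\prime})$; and (c) the fact that the coordinate torus is maximal among tori in $\operatorname{Aut}(D^{\prime})$, which one gets by linearizing the isotropy representation into $U(n)$ so that every torus has dimension at most $n$. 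Assembling (a)--(c) correctly is precisely the nontrivial content of \cite{Sunada}; you flag this honestly, but it means your text is a roadmap plus a citation rather than a proof --- which, to be fair, is no less than the paper itself provides.
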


We can now conclude by putting Theorem (\ref{Functor})\ and Theorem (\ref%
{Sunada})\ together:

\begin{theorem}
\label{SunadaApp}Let $f,g$ be positive regular free formal power series.\ If 
$\mathcal{A}\left( \mathcal{D}_{f}\right) $ and $\mathcal{A}\left( \mathcal{D%
}_{g}\right) $ are isomorphic in \textrm{NCD} then:

\begin{enumerate}
\item The series $f$ and $g$ have the same number of indeterminate, denoted
by $n$,

\item There exists a permutation $\sigma $ of the set $\left\{ 1,\ldots
,n\right\} $ and a function $\lambda :\left\{ 1,\ldots ,n\right\}
\longrightarrow \mathbb{C}$ such that the map:%
\begin{equation*}
\left( z_{1},\ldots ,z_{n}\right) \in \mathbb{C}^{n}\mapsto \left( \lambda
_{1}z_{\sigma (1)},\ldots ,\lambda _{n}z_{\sigma (n)}\right)
\end{equation*}%
induces an isomorphism from $\mathbb{D}_{f}^{1}$ onto $\mathbb{D}_{g}^{1}$.
\end{enumerate}
\end{theorem}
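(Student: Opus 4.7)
The plan is to combine the functor from Theorem \ref{Functor} (specialized to $k=1$) with Sunada's rigidity theorem \ref{Sunada}, using the Reinhardt structure of $\mathbb{D}_f^1$ established in the proposition above.

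First, let $n$ be the number of indeterminates of $f$ and $m$ the number of indeterminates of $g$, so that $\mathbb{D}_f^1 \subseteq \mathbb{C}^n$ and $\mathbb{D}_g^1 \subseteq \mathbb{C}^m$. Applying Theorem \ref{Functor} with $k=1$ to the isomorphism $\Phi$ produces a biholomorphic map $\widehat{\Phi}_1 : \mathbb{D}_g^1 \longrightarrow \mathbb{D}_f^1$. Because biholomorphic (hence in particular diffeomorphic as real manifolds) open connected subsets of $\mathbb{C}^n$ and $\mathbb{C}^m$ must have the same real dimension, we conclude $n=m$, which settles (1).

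Next, I would verify the hypotheses of Sunada's theorem. Both $\mathbb{D}_f^1$ and $\mathbb{D}_g^1$ are Reinhardt domains in $\mathbb{C}^n$ by the preceding proposition, and both contain the origin: since $a_e=0$ for a positive regular free formal power series, the tuple $(0,\ldots,0)$ satisfies $\sum_{\alpha \in \mathbb{F}_n^+} a_\alpha |0_\alpha|^2 = 0 < 1$, so $0 \in \mathbb{D}_f^1$ and similarly $0 \in \mathbb{D}_g^1$. Theorem \ref{Sunada} therefore applies to the biholomorphism $\widehat{\Phi}_1$ and yields a permutation $\sigma$ of $\{1,\ldots,n\}$ and scalars $\lambda_1,\ldots,\lambda_n \in \mathbb{C}$ such that the linear map $(z_1,\ldots,z_n) \mapsto (\lambda_1 z_{\sigma(1)}, \ldots, \lambda_n z_{\sigma(n)})$ is itself a biholomorphism between $\mathbb{D}_g^1$ and $\mathbb{D}_f^1$, which is exactly statement (2).

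There is no real obstacle here: both tools have already been established. The only minor point of care is to confirm that $0$ lies in each of the relevant domains so that Sunada's theorem applies; this follows immediately from the condition $a_e = 0$ in the definition of a positive regular free formal power series. A careful reader might also want to note that while Sunada's theorem is usually stated for open domains, this causes no issue since $\mathbb{D}_f^1$ is, by definition of the functor $\mathbb{D}^1$, the interior of $\mathcal{D}_f(\mathbb{C})$, hence open and connected (connectedness following from the straight-line contraction $t \mapsto (tT_1,\ldots,tT_n)$ to the origin noted in the proof of Theorem \ref{Functor}).
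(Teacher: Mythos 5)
Your proposal is correct and follows exactly the route the paper intends: the paper's proof of Theorem \ref{SunadaApp} consists precisely of invoking Theorem \ref{Functor} with $k=1$ together with Sunada's Theorem \ref{Sunada}, and you have merely supplied the routine verifications (equality of dimensions, $0\in\mathbb{D}_f^1$, the Reinhardt and openness hypotheses) that the paper leaves implicit.
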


We can already prove that there are many different noncommutative domain
algebras (i.e. non isomorphic) using Theorem (\ref{SunadaApp}).

Moreover, we shall see in a later subsection that it is possible to
construct isomorphisms in \textrm{NCD }from simply rescaling weighted shifts
in a manner suggested by Theorem (\ref{SunadaApp}). However, it is not true
that isomorphisms in \textrm{HD}$_{n\cdot 1^{2}}$ can be lifted to
isomorphisms in \textrm{NCD} even when they are presented in as nice a form
as in Theorem (\ref{SunadaApp}). The following example is our main
illustration of the results of this paper, as it displays the role of the
higher invariants $\mathbb{D}^{k}$ ($k>1$) to capture some of the
noncommutative aspects of the classification problem in \textrm{NCD}:

\begin{example}
\label{MainEx}Let $f=X_{1}+X_{2}+X_{1}X_{2}$ and $g=X_{1}+X_{2}+\frac{1}{2}%
\left( X_{1}X_{2}+X_{2}X_{1}\right) $. Then by definition $\mathbb{D}%
_{f}^{1}=\mathbb{D}_{g}^{1}=\left\{ \left( \lambda _{1},\lambda _{2}\right)
:\left\vert \lambda _{1}\right\vert ^{2}+\left\vert \lambda _{2}\right\vert
^{2}+\left\vert \lambda _{1}\lambda _{2}\right\vert ^{2}<1\right\} $. Yet we
shall see in the next section that $\mathcal{A}\left( \mathcal{D}_{f}\right) 
$ and $\mathcal{A}\left( \mathcal{D}_{g}\right) $ are not isomorphic in 
\textrm{NCD}.
\end{example}

In other words, Theorem (\ref{SunadaApp}) does not have a converse. \bigskip

The higher invariants $\mathbb{D}^{k}$ ($k>1$) will be used to show that
unital completely isometric isomorphisms that map the zero character to the
zero character are very simple. To illustrate this suppose that $f$ and $g$
are free formal power series and that $\Phi :\mathcal{A}\left( \mathcal{D}%
_{f}\right) \rightarrow \mathcal{A}\left( \mathcal{D}_{g}\right) $ is a
unital isometric isomorphism with $\widehat{\Phi }_{1}\left( 0\right) =0.$
Then $\widehat{\Phi }_{1}:\mathbb{D}_{g}^{1}\rightarrow \mathbb{D}_{f}^{1}$
is given by%
\begin{equation*}
\widehat{\Phi }_{1}\left( \lambda _{1},\ldots ,\lambda _{n}\right) =\left(
\sum_{\left\vert \alpha \right\vert \geq 1}c_{1,\alpha }\lambda _{\alpha
},\ldots ,\sum_{\left\vert \alpha \right\vert \geq 1}c_{n,a}\lambda _{\alpha
}\right) ,
\end{equation*}%
where $\Phi \left( W_{i}^{f}\right) =\sum_{\left\vert \alpha \right\vert
\geq 1}c_{i,\alpha }W_{\alpha }^{g}$ for $i\leq n.$ Now, by Cartan's Lemma,
which appears below, $\widehat{\Phi }_{1}$ is linear and we conclude that
for $i\leq n,$ 
\begin{equation*}
\sum_{\left\vert \alpha \right\vert \geq 1}c_{i,\alpha }\lambda _{\alpha
}=c_{i,g_{1}}\lambda _{1}+\cdots +c_{i,g_{n}}\lambda _{n}.
\end{equation*}%
Most terms cancel. Look at the left hand side and notice that the term $%
\left( \lambda _{1}\right) ^{2}$ appears only once and has coefficient $%
c_{1,g_{1}g_{1}}.$ Hence $c_{1,g_{1}g_{1}}=0.$ The term $\lambda _{1}\lambda
_{2}$ appears twice and has coefficients $c_{i,g_{1}g_{2}}$ and $%
c_{i,g_{2}g_{1}}.$ Hence $c_{i,g_{1}g_{2}}+c_{i,g_{2}g_{1}}=0.$ The main
point of the next subsection is that the higher invariants $\mathbb{D}^{k}$ (%
$k>1$) can be used to separate $c_{i,g_{1}g_{2}}$ from $c_{i,g_{2}g_{1}},$
and more generally, to conclude that $c_{i,\alpha }=c_{i,\alpha }=0$
whenever $\left\vert \alpha \right\vert >1.$

\subsection{The invariants $\mathbb{D}^{k}$}

Let $f$ be a positive regular free series. The domains $\mathbb{D}_{f}^{k}$
are not usually Reinhardt domains for $k\in \mathbb{N}^{\ast }\backslash
\left\{ 1\right\} $, yet they still enjoy enough symmetry to make some very
useful observations:

\begin{definition}
Let $D\subseteq \mathbb{C}^{m}$ be an open connected set. Then $D$ is
circular if for all $\omega \in \mathbb{T}$ and $z\in D$ then $\omega
z=\left( \omega z_{1},\ldots ,\omega z_{m}\right) \in D$.
\end{definition}

\begin{proposition}
For any positive regular free series $f$ and any $k\in \mathbb{N}^{\ast }$
the domain $\mathbb{D}_{f}^{k}$ is circular.
\end{proposition}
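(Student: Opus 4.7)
The plan is to reduce the claim to the elementary observation that the defining inequality for $\mathcal{D}_f(\mathbb{C}^k)$ is invariant under scaling the entire tuple by a unimodular scalar, and then transfer this symmetry from the closed set $\mathcal{D}_f(\mathbb{C}^k)$ to its interior $\mathbb{D}_f^k$ by a topological argument.

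First, I would fix $\omega \in \mathbb{T}$ and $T = (T_1,\ldots,T_n) \in \mathcal{D}_f(\mathbb{C}^k)$, and compute $(\omega T)_\alpha = \omega^{|\alpha|} T_\alpha$ for each $\alpha \in \mathbb{F}_n^+$, using the multiplicativity of the map $\alpha \mapsto T_\alpha$. Consequently
\begin{equation*}
\sum_{\alpha \in \mathbb{F}_n^+} a_\alpha (\omega T)_\alpha (\omega T)_\alpha^\ast
= \sum_{\alpha \in \mathbb{F}_n^+} a_\alpha |\omega|^{2|\alpha|} T_\alpha T_\alpha^\ast
= \sum_{\alpha \in \mathbb{F}_n^+} a_\alpha T_\alpha T_\alpha^\ast \leq 1,
\end{equation*}
so that $\omega T \in \mathcal{D}_f(\mathbb{C}^k)$. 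This shows the closed set $\mathcal{D}_f(\mathbb{C}^k)$ is stable under multiplication by elements of $\mathbb{T}$.

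Next, I would observe that for each $\omega \in \mathbb{T}$ the map $\Theta_\omega : T \mapsto \omega T$ is a linear homeomorphism of $\mathbb{C}^{nk^2}$ onto itself, with inverse $\Theta_{\bar\omega}$. Since $\Theta_\omega$ and $\Theta_{\bar\omega}$ both map $\mathcal{D}_f(\mathbb{C}^k)$ into itself by the previous step, the restriction $\Theta_\omega\colon \mathcal{D}_f(\mathbb{C}^k)\to \mathcal{D}_f(\mathbb{C}^k)$ is a homeomorphism, and therefore sends interior points (in the ambient topology of $\mathbb{C}^{nk^2}$) to interior points. Hence $\omega \cdot \mathbb{D}_f^k \subseteq \mathbb{D}_f^k$ for every $\omega \in \mathbb{T}$, which is the circularity condition.

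Finally, to match the definition verbatim, I would briefly record that $\mathbb{D}_f^k$ is open by construction and connected by the argument already given at the end of the proof of Theorem \ref{Functor} (the path $t\mapsto tT$ connects any $T\in \mathcal{D}_f(\mathbb{C}^k)$ to $0$, and the same path lies in the interior for $t\in [0,1)$ when $T$ is interior). There is no genuine obstacle here; the only subtlety worth flagging is the passage from the closed set $\mathcal{D}_f(\mathbb{C}^k)$ to its interior, which is why the topological homeomorphism argument rather than a direct inequality argument is the cleanest route.
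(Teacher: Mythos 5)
Your proof is correct, and it follows the evident intended argument: the paper states this proposition without proof, but your computation $(\omega T)_\alpha(\omega T)_\alpha^{\ast}=|\omega|^{2|\alpha|}T_\alpha T_\alpha^{\ast}=T_\alpha T_\alpha^{\ast}$ is exactly the matrix analogue of the paper's proof of the $k=1$ (Reinhardt) case. Your extra step of transferring the $\mathbb{T}$-invariance from the closed set $\mathcal{D}_f\left(\mathbb{C}^k\right)$ to its interior via the homeomorphism $T\mapsto\omega T$ is a welcome refinement that the paper's $k=1$ argument glosses over by writing the defining inequality as strict.
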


A powerful tool to classify circular domains is Cartan's lemma which we now
quote for the convenience of the reader.

\begin{theorem}[Cartan's Lemma \protect\cite{Cartan}]
\label{Cartan}Let $m\in \mathbb{N}^{\ast }$. Let $D,D^{\prime }$ be bounded
circular domains in $\mathbb{C}^{m}$ containing $0$. Let $\Psi
:D\longrightarrow D^{\prime }$ be biholomorphic such that $\Psi (0)=0$. Then 
$\Psi $ is the restriction of a linear map of $\mathbb{C}^{m}$.
\end{theorem}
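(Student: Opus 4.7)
The plan is to use Cartan's classical rotation-averaging trick, exploiting the circular symmetry of both $D$ and $D^{\prime}$. For each $\theta \in \mathbb{R}$, define $\Psi_\theta : D \to D^{\prime}$ by $\Psi_\theta(z) = e^{-i\theta}\Psi(e^{i\theta} z)$. Circularity of $D$ guarantees $e^{i\theta} z \in D$ whenever $z \in D$, while circularity of $D^{\prime}$ ensures the output $e^{-i\theta}\Psi(e^{i\theta} z)$ lies in $D^{\prime}$. Thus $\Psi_\theta$ is a biholomorphism $D \to D^{\prime}$ with $\Psi_\theta(0) = 0$, and a direct computation gives $\Psi_\theta^{\prime}(0) = \Psi^{\prime}(0) =: L$. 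Composing, $g_\theta := \Psi^{-1} \circ \Psi_\theta$ is a holomorphic automorphism of $D$ fixing $0$ whose derivative at $0$ is $L^{-1}L = I$.

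The heart of the proof — and the main obstacle — is the following Schwarz-type uniqueness statement: if $g : D \to D$ is a holomorphic self-map of the bounded domain $D$ with $g(0) = 0$ and $g^{\prime}(0) = I$, then $g$ is the identity. I would establish this by homogeneous expansion around $0$: write $g(z) = z + \sum_{k \geq 2} Q_k(z)$, with $Q_k$ vector-valued homogeneous polynomials of degree $k$, and suppose for contradiction that $Q_{k_0}$ is the first nonzero term. An easy induction on the number of iterations shows that the $n$-th iterate has the expansion $g^n(z) = z + n Q_{k_0}(z) + \text{higher order terms}$. Boundedness of $D$ yields a uniform sup-norm bound on the family $\{g^n\}_{n \in \mathbb{N}}$ over any fixed closed polydisk contained in $D$, and the multivariable Cauchy estimates then bound the degree-$k_0$ Taylor coefficients of $g^n$ uniformly in $n$. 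This forces the linearly growing quantity $n\|Q_{k_0}\|$ to stay bounded, so $Q_{k_0} = 0$, a contradiction.

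Applying this sub-lemma to each $g_\theta$ gives $g_\theta = \mathrm{id}$, i.e.\ $\Psi_\theta = \Psi$ for every $\theta$. To conclude, I expand $\Psi = \sum_{k \geq 1} P_k$ as a sum of vector-valued homogeneous polynomials (the expansion converges on a polydisk about $0$ inside $D$). A direct computation yields $\Psi_\theta(z) = \sum_{k \geq 1} e^{i(k-1)\theta} P_k(z)$, and the identity $\Psi_\theta = \Psi$ for every $\theta$ forces $e^{i(k-1)\theta} P_k = P_k$ for all $\theta$, whence $P_k = 0$ for every $k \geq 2$. Thus the Taylor series of $\Psi$ around $0$ reduces to its linear part, $\Psi = P_1 = L$, and since $D$ is connected this linear identity extends from a neighborhood of $0$ to all of $D$ by the identity theorem. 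Therefore $\Psi$ is the restriction of a linear map of $\mathbb{C}^m$, as claimed.
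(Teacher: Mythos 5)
The paper does not prove this statement; it is quoted as a classical result with a citation to Cartan's 1932 article, so there is no internal proof to compare against. Your argument is correct and is precisely the standard proof of Cartan's linearity theorem: the rotation trick $\Psi_\theta(z)=e^{-i\theta}\Psi(e^{i\theta}z)$ reduces the claim to Cartan's uniqueness theorem (a self-map of a bounded domain fixing $0$ with derivative $I$ is the identity), which you establish correctly via iteration and Cauchy estimates, and the final Fourier-type comparison of homogeneous terms $e^{i(k-1)\theta}P_k=P_k$ cleanly kills all terms of degree $k\geq 2$.
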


Therefore, if two noncommutative domains $\mathcal{A}\left( \mathcal{D}%
_{f}\right) $ and $\mathcal{A}\left( \mathcal{D}_{g}\right) $ are isomorphic
in \textrm{NCD} then for all $k\in \mathbb{N}^{\ast }$ the domains $\mathbb{D%
}_{f}^{k}$ and $\mathbb{D}_{g}^{k}$ are linearly isomorphic, provided the
dual map of the isomorphism maps the zero character to the zero character.
Using this, we can conclude the main result of this paper:

\begin{theorem}
\label{Linear}Let $f,g$ be positive regular free series. Let $n$ be the
number of variables of $f$. Then if there exists an isomorphism $\Phi :%
\mathcal{A}\left( \mathcal{D}_{f}\right) \longrightarrow \mathcal{A}\left( 
\mathcal{D}_{g}\right) $ in \textrm{NCD }such that $\widehat{\Phi }_{1}(0)=0$
then there exists an invertible scalar $n\times n$ matrix $M$ such that:%
\begin{equation*}
\left[ 
\begin{array}{c}
\Phi (W_{1}^{f}) \\ 
\Phi (W_{2}^{f}) \\ 
\vdots \\ 
\Phi (W_{n}^{f})%
\end{array}%
\right] =\left( M\otimes 1_{\ell ^{2}\left( \mathbb{F}_{n}^{+}\right)
}\right) \left[ 
\begin{array}{c}
W_{1}^{g} \\ 
W_{2}^{g} \\ 
\vdots \\ 
W_{n}^{g}%
\end{array}%
\right] \text{.}
\end{equation*}%
(i.e. for each $i\leq n,$ $\Phi (W_{i}^{f})$ is a linear combinations of the
set of weighted shifts $\left\{ W_{1}^{g},\ldots ,W_{n}^{g}\right\} $).
\end{theorem}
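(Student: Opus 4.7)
The plan is to apply the already-developed functorial machinery to each $k$, then combine Cartan's Lemma (to force $\widehat{\Phi}_k$ to be linear) with the noncommutativity of matrices for $k\ge 2$ (to separate the individual Fourier coefficients of $\Phi(W_i^f)$).

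First, I would fix $k\in\mathbb{N}^*$ and invoke Theorem (\ref{Functor}) to obtain the biholomorphism $\widehat{\Phi}_k:\mathbb{D}_g^k\longrightarrow \mathbb{D}_f^k$. Writing $\Phi(W_i^f)=\sum_{\alpha\in\mathbb{F}_n^+}c_{i,\alpha}W_\alpha^g$ as in Lemma (\ref{radialCv}), the $i^{th}$ coordinate of $\widehat{\Phi}_k(T)$ is $\sum_\alpha c_{i,\alpha}T_\alpha$, so $\widehat{\Phi}_k(0)=(c_{1,e},\ldots,c_{n,e})\cdot I_k$. The hypothesis $\widehat{\Phi}_1(0)=0$ gives $c_{i,e}=0$, hence $\widehat{\Phi}_k(0)=0$ for every $k$. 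Next I would observe that $\mathbb{D}_g^k$ and $\mathbb{D}_f^k$ are bounded (from $a_{g_i}T_iT_i^*\le 1$ one gets $\|T_i\|\le a_{g_i}^{-1/2}$) and circular (the analogue of the proposition stated for $k=1$ is proved identically, replacing the scalars $\omega$ by $\omega\cdot I_k$). Cartan's Lemma (Theorem (\ref{Cartan})) then applies and shows that $\widehat{\Phi}_k$ is the restriction of a $\mathbb{C}$-linear map on $\mathbb{C}^{nk^2}$.

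Grouping by homogeneity, let $P_j^{(i)}(T)=\sum_{|\alpha|=j}c_{i,\alpha}T_\alpha$, so that $\widehat{\Phi}_k(T)=\sum_{j\ge 1}(P_j^{(1)}(T),\ldots,P_j^{(n)}(T))$, where the convergence is in norm on a neighborhood of $0$ (by the estimate in the proof of Lemma (\ref{Holo})). Linearity of $\widehat{\Phi}_k$ and the uniqueness of the Taylor expansion at $0$ force $P_j^{(i)}\equiv 0$ on an open neighborhood of $0$ in $\mathbb{C}^{nk^2}$ for every $j\ge 2$. Since each $P_j^{(i)}$ is a polynomial in the matrix entries of $T$, it vanishes identically on $M_{k\times k}^n$.

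The main obstacle is the last step: converting the polynomial identity $\sum_{|\alpha|=j}c_{i,\alpha}T_\alpha=0$ into the vanishing of each individual coefficient. For $k=1$ the monomials $\lambda_\alpha$ of the same commutative shape collapse (as pointed out in the paragraph preceding Theorem (\ref{Linear})), so only linear relations among the $c_{i,\alpha}$'s are obtained. To separate them I would exploit that the identity above must hold for \emph{every} $k$. I will plug in the compressions $T_i=P_NW_i^gP_N$, where $P_N$ is the orthogonal projection onto $\mathrm{span}\{\delta_\beta:|\beta|\le N\}\subseteq\mathcal{F}_n$; these lie in an $M_{k\times k}$ with $k=1+n+\cdots+n^N$, and by construction $T_\alpha\delta_e$ is a strictly positive multiple of $\delta_\alpha$ whenever $|\alpha|\le N$. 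Evaluating the identity at the vector $\delta_e$ therefore yields
\begin{equation*}
\sum_{|\alpha|=j}c_{i,\alpha}\sqrt{\tfrac{b_e}{b_\alpha}}\,\delta_\alpha=0,
\end{equation*}
and linear independence of the basis vectors $\{\delta_\alpha:|\alpha|=j\}$ forces $c_{i,\alpha}=0$ for every word $\alpha$ of length $j\le N$. Letting $N\to\infty$ kills every Fourier coefficient $c_{i,\alpha}$ with $|\alpha|\ge 2$, so $\Phi(W_i^f)=\sum_{j=1}^n c_{i,g_j}W_j^g$. Assembling these identities into the matrix $M=(c_{i,g_j})_{i,j}$ gives the desired formula; invertibility of $M$ follows by applying the same argument to $\Phi^{-1}$ (whose associated biholomorphism still fixes $0$).
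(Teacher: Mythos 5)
Your proposal is correct, and it follows the paper's skeleton up to a point: both arguments pass through Theorem (\ref{Functor}) to get the biholomorphisms $\widehat{\Phi}_k$, check that $\widehat{\Phi}_1(0)=0$ forces $c_{i,e}=0$ and hence $\widehat{\Phi}_k(0)=0$ for all $k$, invoke Cartan's Lemma on the bounded circular domains $\mathbb{D}_g^k$, and then use linearity plus uniqueness of the homogeneous (Taylor) expansion at $0$ to conclude that each degree-$j$ piece $P_j^{(i)}$ vanishes for $j\geq 2$. Where you genuinely diverge is the final, and most delicate, step of separating the individual coefficients $c_{i,\alpha}$ inside the vanishing homogeneous polynomial. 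The paper fixes a word $\beta=g_{\beta_1}\cdots g_{\beta_k}$ of length $k$, works in $\mathbb{D}_g^k$, extracts the $(1,k)$ matrix entry of the $i^{th}$ component, and carries out a combinatorial analysis of the monomial $t_{11}^{\beta_1}t_{12}^{\beta_2}t_{23}^{\beta_3}\cdots t_{(k-1)k}^{\beta_k}$ to show that the set $\Sigma_\beta$ of words contributing to it is exactly $\left\{ \beta\right\} $, whence $c_{i,\beta}=0$. You instead first upgrade the vanishing of $P_j^{(i)}$ from the open set $\mathbb{D}_g^k$ to all of $M_{k\times k}^n$ (a polynomial vanishing on a nonempty open set vanishes identically), and then evaluate on the compressions $P_NW_i^gP_N$ of the universal weighted shifts applied to the vacuum vector $\delta_e$, so that $T_\alpha\delta_e=\sqrt{1/b_\alpha}\,\delta_\alpha$ and linear independence of the $\delta_\alpha$ kills each coefficient at once. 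Your route buys a cleaner, more conceptual endgame that sidesteps the monomial bookkeeping and reuses the universal model itself as the separating representation; the paper's route is more elementary in that it stays entirely inside explicit matrix entries and makes visible exactly which finite-dimensional representation distinguishes $c_{i,g_1g_2}$ from $c_{i,g_2g_1}$. Both require the invariants $\mathbb{D}^k$ for arbitrarily large $k$. The only points you should make explicit in a final write-up are the justification that $g$ has the same number $n$ of variables as $f$ (which the paper extracts from $\widehat{\Phi}_1$ being a biholomorphism between open subsets of Hermitian spaces) and the uniform convergence near $0$ that legitimizes identifying the $P_j^{(i)}$ with the homogeneous components of the Taylor series; both are minor and already available from Theorem (\ref{Functor}) and Lemma (\ref{Holo}).
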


\begin{proof}
For $i\in \left\{ 1,\ldots ,n\right\} $ we write (see the proof of Theorem (%
\ref{Functor})):%
\begin{equation*}
\Phi \left( W_{i}^{f}\right) =\sum_{\alpha \in \mathbb{F}_{n}^{+}}c_{i,%
\alpha }W_{\alpha }^{g}\text{.}
\end{equation*}%
Let $k\in \mathbb{N}^{\ast }$ be fixed. By Theorem (\ref{Functor}), the maps 
$\widehat{\Phi }_{k}=\mathbb{D}_{k}\left( \Phi \right) $ are biholomorphic
maps from $\mathbb{D}_{g}^{k}$ onto $\mathbb{D}_{f}^{k}$, both being
circular domains containing $0$. Hence $\mathbb{D}_{g}^{1}$ is an open
subset of $\mathbb{C}^{n}$ and thus $g$ has as many variables as $f$. Since
for $\chi \in \mathbb{D}_{g}^{1},$ $(\chi _{1},\ldots ,\chi _{n})\in \mathbb{%
C}^{n}$ we have:%
\begin{equation*}
\widehat{\Phi }_{1}\left( \chi \right) =\left( \sum_{\alpha \in \mathbb{F}%
_{n}^{+}}c_{1,\alpha }\chi _{\alpha },\ldots ,\sum_{\alpha \in \mathbb{F}%
_{n}^{+}}c_{n,\alpha }\chi _{\alpha }\right)
\end{equation*}%
we conclude that $\widehat{\Phi }_{1}\left( 0\right) =0$ imposes that $%
c_{1,0}=\cdots =c_{n,0}=0$. Consequently, by construction $\widehat{\Phi }%
_{k}(0)=0$. Since for all $k\in \mathbb{N}^{\ast }$ the bounded domains $%
\mathbb{D}_{f}^{1}$ and $\mathbb{D}_{g}^{1}$ are circular, contain $0$ and
the maps $\widehat{\Phi }_{k}$ are biholomorphic, we conclude by Cartan's
Lemma (\ref{Cartan}) that $\widehat{\Phi }_{k}$ is the restriction of a
linear map. We now show that this implies that $\Phi $ is induced by a
linear map as stated in the theorem.

Fix $k\geq 2$ and $i\in \{1,\ldots ,n\}$. We shall exploit the fact that two
power series agree on an open set if and only if their coefficients agree.
Let $\beta \in \mathbb{F}_{n}^{+}$ be fixed and of length $k$; we write $%
\beta =g_{\beta _{1}}\ldots g_{\beta _{k}}$ where $\beta _{1},\ldots ,\beta
_{k}\in \left\{ 1,\ldots ,n\right\} $.

\medskip

Our goal is to prove that $c_{i,\beta }=0.$

\medskip

For any $j,l\in \left\{ 1,\ldots ,k\right\} $ and any matrix $N\in
M_{k\times k}\left( \mathbb{C}\right) $ we write $N_{j,l}$ for the $\left(
j,l\right) $ entry of $N$. Let $T=\left( T_{1},\ldots ,T_{n}\right) \in 
\mathbb{D}_{g}^{k},$ then viewing $\widehat{\Phi }_{k}\left( T\right) $ as
an $n$-tuple of matrices in $M_{k\times k}\left( \mathbb{C}\right) $, we let 
$\widehat{\Phi }_{k,i}\left( T\right) $ be the $i^{th}$ component of this $n$%
-tuple. We then set:%
\begin{equation*}
\eta =\left( \widehat{\Phi }_{k,i}\right) _{1,k}:\mathbb{D}%
_{g}^{k}\rightarrow 
\mathbb{C}%
\end{equation*}%
Then $\eta $ is a scalar valued holomorphic function, and in fact it has
expression:%
\begin{equation*}
\eta \left( T_{1},\ldots ,T_{n}\right) =\sum_{\alpha \in \mathbb{F}%
_{n}^{+}}c_{i,\alpha }\left( T_{\alpha }\right) _{1,k}.
\end{equation*}%
To ease notation, for $T=\left( T_{1},\ldots ,T_{n}\right) \in M_{k\times
k}\left( \mathbb{C}\right) ,$ $m\in \left\{ 1,\ldots ,n\right\} ,$ and $%
j,l\in \left\{ 1,\ldots ,k\right\} ,$ will denote $\left( T_{m}\right)
_{j,l} $ by $t_{jl}^{m}$ . Then $\eta $ is a holomorphic function on the
variables $t_{jl}^{m}$.

For a fixed $\alpha \in \mathbb{F}_{n}^{+}$ with $\left\vert \alpha
\right\vert =s$, the function $\left( T_{1},\ldots ,T_{n}\right) \mapsto
(T_{\alpha })_{1,k}$ is the sum of all homogeneous polynomials in the
entries of the matrices $T_{1},\ldots ,T_{n}$ of the form:%
\begin{eqnarray*}
\left( T_{1},\ldots ,T_{n}\right) &\mapsto &\left( T_{\alpha _{1}}\right)
_{1,i_{1}}\left( T_{\alpha _{2}}\right) _{i_{1},i_{2}}\cdots \left(
T_{\alpha _{s}}\right) _{i_{s-1},k} \\
&=&t_{1i_{1}}^{\alpha _{1}}\cdots t_{i_{s-1}k}^{\alpha _{s}}\text{.}
\end{eqnarray*}%
In particular, we note that $\eta $ is the sum of homogeneous polynomials,
and $\left( T_{\alpha }\right) _{1,k}$ is homogeneous of degree the length
of $\alpha $ for all $\alpha \in \mathbb{F}_{n}^{+}$. However, since $%
\widehat{\Phi }_{k}$ is linear, $\eta $ is also linear and therefore the sum
of all homogeneous polynomials of a degree greater than or equal to $2$
vanish.

We now wish to identity the coefficient of the polynomial $t_{11}^{\beta
_{1}}t_{12}^{\beta _{2}}t_{23}^{\beta _{3}}\cdots t_{(k-1)k}^{\beta _{k}}$
in $\eta $. Let:%
\begin{equation*}
\Sigma _{\beta }=\left\{ \alpha \in \mathbb{F}_{n}^{+}:\left( T_{\alpha
}\right) _{1,k}\text{ contains the term }t_{11}^{\beta _{1}}t_{12}^{\beta
_{2}}t_{23}^{\beta _{3}}\cdots t_{(k-1)k}^{\beta _{k}}\right\} \text{.}
\end{equation*}%
The coefficient of $t_{11}^{\beta _{1}}t_{12}^{\beta _{2}}t_{23}^{\beta
_{3}}\cdots t_{(k-1)k}^{\beta _{k}}$ in $\eta $ is equal to $\sum_{\alpha
\in \Sigma _{\beta }}c_{i,\alpha }$ and since $\eta $ is linear and $k\geq 2$
it follows that $\sum_{\alpha \in \Sigma _{\beta }}c_{i,\alpha }=0.$ To
finish the proof we will check that $\sum_{\beta }$ contains only one
element, which has to be $\beta .$ Then it will follow that $c_{i,\beta }=0$
as desired.

First, if $\alpha \in \Sigma _{\beta }$ then the length of $\alpha $ is the
length of $\beta $ i.e. $k$. Let us write $\alpha =g_{\alpha _{1}}\cdots
g_{\alpha _{k}}$. Now, $\left( T_{\alpha }\right) _{1,k}$ will be the sum of
the terms $t_{1i_{1}}^{\alpha _{1}}\cdots t_{i_{k-1}k}^{\alpha _{k}}$ for $%
i_{1},\ldots ,i_{k-1}\in \left\{ 1,\ldots ,n\right\} $. Hence there exists $%
i_{1},\ldots ,i_{k-1}\in \left\{ 1,\ldots ,k\right\} $ such that on $\mathbb{%
D}_{g}^{k}$ we have:%
\begin{equation}
t_{1i_{1}}^{\alpha _{1}}\cdots t_{i_{k-1}k}^{\alpha _{k}}=t_{11}^{\beta
_{1}}t_{12}^{\beta _{2}}t_{23}^{\beta _{3}}\cdots t_{(k-1)k}^{\beta _{k}}%
\text{.}  \label{LinearId1}
\end{equation}

Let us view the map $t_{jl}^{m}$ as defined from $\mathbb{C}^{nk^{2}}$ and
mapping the coordinate $mk^{2}+jk+l$, i.e. we linearly identify $M_{k\times
k}\left( \mathbb{C}\right) $ with $\mathbb{C}^{k^{2}}$ using canonical
bases. Since $\mathbb{D}_{g}^{k}$ is open in $\mathbb{C}^{nk^{2}}$\ and
contain $0$, Identity (\ref{LinearId1}) must hold on some open hypercube in $%
\mathbb{C}^{nk^{2}}$ around $0$. Hence the factors in Identity (\ref%
{LinearId1}) are identical up to some permutation. Now, there is a unique
factor of the form $t_{jk}^{m}$ on the right hand side, so there must be a
unique one on the left hand side. Moreover these two must agree. We deduce
that $i_{k-1}=k-1$ and $\alpha _{k}=\beta _{k}$. Now, there is a unique
factor of the form $t_{j\left( k-1\right) }^{m}$ on the right hand side, so
once again since $i_{k-2}=k-2$ and $\beta _{k-1}=\alpha _{k-1}$. By an easy
induction, we conclude that $\alpha _{m}=\beta _{m}$ for $m\in \left\{
1,\ldots ,k\right\} $. Consequently, $\Sigma _{\beta }=\left\{ \beta
\right\} .$

Since $\beta $ is arbitrary of length at least $2$ we conclude that $%
c_{i,\beta }=0$ for all $\beta $ of length at least $2$ and all $i\in
\left\{ 1,\ldots ,n\right\} $. Hence if we set:%
\begin{equation*}
M=\left[ 
\begin{array}{cccc}
c_{1,1} & c_{1,2} & \cdots & c_{1,n} \\ 
c_{2,1} & c_{2,2} & \cdots & c_{2,n} \\ 
\vdots & \vdots & \ddots & \vdots \\ 
c_{n,1} & c_{n,2} & \cdots & c_{n,n}%
\end{array}%
\right]
\end{equation*}%
it is now clear that $\Phi =\left( M\otimes 1_{\ell ^{2}\left( \mathbb{F}%
_{n}^{+}\right) }\right) $.
\end{proof}

\bigskip We now turn to some important examples of applications of Theorem (%
\ref{Linear}), including the important Example\ (\ref{MainEx}).

\section{Applications and Examples}

We now use Theorem (\ref{Linear}) to prove that various important examples
of pure formal power series give rise to non isomorphic algebras in \textrm{%
NCD}. First, we show that combining Thullen's classification of two
dimensional domains with Theorem (\ref{Linear}), we can distinguish in 
\textrm{NCD }the noncommutative domain algebras associated to $%
X_{1}+X_{2}+X_{1}X_{2}$ and $X_{1}+X_{2}+\frac{1}{2}\left(
X_{1}X_{2}+X_{2}X_{1}\right) $ which we encountered in Example\ (\ref{MainEx}%
). Note that the invariant $\mathbb{D}^{1}$ was not enough to do so, thus
showing the need to consider higher dimensional characters of noncommutative
domain algebras to capture the noncommutativity of the symbol defining them.

We then prove that the noncommutative disk algebras are exactly described by
degree one positive regular free series, thus allowing for a very simple
characterization of these algebras among all objects in \textrm{NCD}. In
particular, the algebras of Example\ (\ref{MainEx})\ are not disk algebras
--- and are, informally, the simplest such examples. In the process of our
investigation, we will also point out new examples of domains in $\mathbb{C}%
^{nk^{2}}$ with noncompact automorphism groups which occur naturally within
our framework.

\subsection{An Application of Thullen Characterization}

In 1931, Thullen \cite{Thullen} proved that if a bounded Reinhardt domain in 
$%
\mathbb{C}
^{2}$ has a biholomorphic map that does not map zero to zero, the domain is
linearly equivalent to one of the following:

\begin{enumerate}
\item Polydisc $\left\{ \left( z,w\right) \in 
\mathbb{C}
^{2}:\left\vert z\right\vert <1,\left\vert w\right\vert <1\right\} $,

\item Unit ball $\left\{ \left( z,w\right) \in 
\mathbb{C}
^{2}:\left\vert z\right\vert ^{2}+\left\vert w\right\vert ^{2}<1\right\} ,$

\item Thullen domain $\left\{ \left( z,w\right) \in 
\mathbb{C}
^{2}:\left\vert z\right\vert ^{2}+\left\vert w\right\vert ^{2/p}<1\right\} ,$
$p>0,p\neq 1.$
\end{enumerate}

We will use Thullen's Theorem to prove that the algebras in Example\ (\ref%
{MainEx})\ are non isomorphic in \textrm{NCD}. Indeed, an isomorphism $\Phi :%
\mathcal{A}\left( \mathcal{D}_{f}\right) \rightarrow \mathcal{A}\left( 
\mathcal{D}_{g}\right) $ induces a biholomorphic map $\widehat{\Phi }:%
\mathbb{D}_{g}^{1}\rightarrow \mathbb{D}_{f}^{1}.$ Since 
\begin{equation*}
\mathbb{D}_{f}^{1}=\mathbb{D}_{g}^{1}=\left\{ \left( \lambda _{1},\lambda
_{2}\right) \in 
\mathbb{C}
^{2}:\left\vert \lambda _{1}\right\vert ^{2}+\left\vert \lambda
_{2}\right\vert ^{2}+\left\vert \lambda _{1}\lambda _{2}\right\vert
^{2}<1\right\}
\end{equation*}
is not linearly equivalent to the three examples listed above, we conclude
that $\widehat{\Phi }\left( 0\right) =0.$

\bigskip

To proceed with the proof we list two useful lemmas, using the notations
established in Section 2:

\begin{lemma}
\label{NormLemma}For every $\alpha \in \mathbb{F}_{n}^{+}$ we have $%
\left\Vert W_{\alpha }\right\Vert =\sqrt[2]{\frac{1}{b_{\alpha }}}$.
\end{lemma}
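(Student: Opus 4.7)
The plan is to compute $W_\alpha \delta_\beta$ for an arbitrary basis vector $\delta_\beta$ and then read off the norm as the supremum of the resulting weights, using the submultiplicativity of $(b_\gamma)_{\gamma \in \mathbb{F}_n^+}$ established in Inequality (\ref{popescu2-3}).

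First I would write $\alpha = g_{i_1}\cdots g_{i_k}$, so that $W_\alpha = W_{i_1}^f \cdots W_{i_k}^f$. Applying the defining formula $W_i^f \delta_\gamma = \sqrt{b_\gamma/b_{g_i\gamma}}\,\delta_{g_i\gamma}$ iteratively from right to left, the factors telescope to give
\begin{equation*}
W_\alpha \delta_\beta \;=\; \sqrt{\frac{b_\beta}{b_{\alpha\beta}}}\,\delta_{\alpha\beta}
\qquad \text{for every } \beta \in \mathbb{F}_n^+.
\end{equation*}
This is the key computation, and it is a straightforward induction on $|\alpha|$.

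Since $W_\alpha$ sends each element of the orthonormal basis $\{\delta_\beta : \beta \in \mathbb{F}_n^+\}$ to a scalar multiple of another basis element, and since the map $\beta \mapsto \alpha\beta$ is injective on $\mathbb{F}_n^+$, these images are mutually orthogonal. Hence $W_\alpha$ is a weighted shift and
\begin{equation*}
\|W_\alpha\| \;=\; \sup_{\beta \in \mathbb{F}_n^+} \sqrt{\frac{b_\beta}{b_{\alpha\beta}}}.
\end{equation*}
By Inequality (\ref{popescu2-3}) we have $b_{\alpha\beta} \geq b_\alpha b_\beta$ for every $\beta$, which yields the upper bound $\|W_\alpha\| \leq \sqrt{1/b_\alpha}$. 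For the matching lower bound, I would simply take $\beta = e$, the identity of $\mathbb{F}_n^+$: then $b_e = 1$ and $\alpha\beta = \alpha$, so $W_\alpha \delta_e = \sqrt{1/b_\alpha}\,\delta_\alpha$, giving $\|W_\alpha\| \geq \sqrt{1/b_\alpha}$.

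There is no real obstacle here; the whole proof hinges on the telescoping identity and on the previously established inequality $b_{\alpha\beta} \geq b_\alpha b_\beta$. The only place requiring mild care is verifying the telescoping step and checking that the supremum is actually attained at $\beta = e$ (which it is, by direct substitution).
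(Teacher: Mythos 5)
Your proof is correct and follows essentially the same route as the paper's: compute $W_{\alpha}\delta_{\beta}=\sqrt{b_{\beta}/b_{\alpha\beta}}\,\delta_{\alpha\beta}$, observe that the orthonormal basis is mapped to an orthogonal family so the norm is the supremum of the weights, bound it above via $b_{\alpha\beta}\geq b_{\alpha}b_{\beta}$, and note the supremum is attained at the empty word. The paper's version is just a terser rendering of the same argument.
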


\begin{proof}
Let $\alpha \in \mathbb{F}_{n}^{+}$. The operator $W_{\alpha }$ maps the
orthonormal basis $\left\{ \delta _{\beta }:\beta \in \mathbb{F}%
_{n}^{+}\right\} $ into an orthogonal family. Therefore:%
\begin{equation*}
\left\Vert W_{\alpha }\right\Vert =\sup_{\beta \in \mathbb{F}%
_{n}^{+}}\left\Vert W_{\alpha }\delta _{\beta }\right\Vert =\sup_{\beta \in 
\mathbb{F}_{n}^{+}}\sqrt[2]{\frac{b_{\beta }}{b_{\alpha \beta }}}\leq \sqrt[2%
]{\frac{b_{0}}{b_{\alpha }}}=\left\Vert W_{\alpha }\delta _{0}\right\Vert 
\text{.}
\end{equation*}%
Hence the lemma is proven.
\end{proof}

\begin{lemma}
\label{NormLemma2}For every $k\in \mathbb{N}$ and scalar family $\left(
x_{\alpha }\right) _{\alpha \in \mathbb{F}_{n}^{+}}$ we have:%
\begin{equation*}
\left\Vert \sum_{\left\vert \alpha \right\vert =k}x_{\alpha }W_{\alpha
}\right\Vert =\sqrt[2]{\sum_{\left\vert \alpha \right\vert =k}\frac{%
\left\vert x_{\alpha }\right\vert ^{2}}{b_{\alpha }}}\text{.}
\end{equation*}
\end{lemma}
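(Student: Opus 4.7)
The plan is to diagonalize $T^{\ast}T$ in the canonical basis, where $T=\sum_{|\alpha|=k}x_{\alpha}W_{\alpha}$. First, by an induction on $|\alpha|$ that telescopes the defining weights $\sqrt{b_{\gamma}/b_{g_{i}\gamma}}$ of $W_{i}^{f}$, I would establish the formula
\[
W_{\alpha}\delta_{\beta}=\sqrt{\frac{b_{\beta}}{b_{\alpha\beta}}}\,\delta_{\alpha\beta}
\]
for all $\alpha,\beta\in\mathbb{F}_{n}^{+}$. This is the direct analogue of the computation used right before Lemma \ref{NormLemma} and generalizes the case $|\alpha|=1$ used implicitly in Popescu's construction.

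The key observation is then that for $\alpha,\alpha^{\prime}\in\mathbb{F}_{n}^{+}$ both of length $k$ and any $\beta,\beta^{\prime}\in\mathbb{F}_{n}^{+}$, the products $\alpha\beta$ and $\alpha^{\prime}\beta^{\prime}$ coincide in the free semigroup $\mathbb{F}_{n}^{+}$ only when $\alpha=\alpha^{\prime}$ and $\beta=\beta^{\prime}$ (one reads off the first $k$ letters). Hence the vectors $\{W_{\alpha}\delta_{\beta}:|\alpha|=k,\ \beta\in\mathbb{F}_{n}^{+}\}$ form an orthogonal family. Consequently $T^{\ast}T$ is diagonal in the basis $\{\delta_{\beta}\}$, with
\[
T^{\ast}T\,\delta_{\beta}=\Bigl(\sum_{|\alpha|=k}|x_{\alpha}|^{2}\,\frac{b_{\beta}}{b_{\alpha\beta}}\Bigr)\delta_{\beta},
\]
so that $\|T\|^{2}=\sup_{\beta\in\mathbb{F}_{n}^{+}}\sum_{|\alpha|=k}|x_{\alpha}|^{2}\,b_{\beta}/b_{\alpha\beta}$.

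To finish, I would invoke Popescu's submultiplicativity relation $b_{\alpha\beta}\geq b_{\alpha}b_{\beta}$ from inequality (\ref{popescu2-3}), which yields $b_{\beta}/b_{\alpha\beta}\leq 1/b_{\alpha}$ uniformly in $\beta$. Taking the supremum gives the upper bound $\|T\|^{2}\leq\sum_{|\alpha|=k}|x_{\alpha}|^{2}/b_{\alpha}$, while the choice $\beta=e$ (the empty word, for which $b_{e}=1$) already achieves this value in $\|T\delta_{e}\|^{2}$, producing equality. The main obstacle is not any single estimate but rather the combinatorial bookkeeping around the freeness of $\mathbb{F}_{n}^{+}$ that makes the ranges of the $W_{\alpha}$ (for $|\alpha|=k$) pairwise orthogonal; once this is in place, submultiplicativity of $(b_{\alpha})$ together with $b_{e}=1$ immediately pins the supremum at the vacuum vector.
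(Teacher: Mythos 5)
Your proposal is correct and follows essentially the same route as the paper: both rest on the identity $W_{\alpha }\delta _{\beta }=\sqrt{b_{\beta }/b_{\alpha \beta }}\,\delta _{\alpha \beta }$, the orthogonality of the images coming from unique factorization in $\mathbb{F}_{n}^{+}$, and the submultiplicativity $b_{\alpha \beta }\geq b_{\alpha }b_{\beta }$ to show the supremum over $\beta $ is attained at the vacuum vector $\delta _{e}$. Phrasing it as diagonalizing $T^{\ast }T$ rather than as ``an operator sending an orthonormal basis to an orthogonal family has norm equal to the supremum of the image norms'' is only a cosmetic difference.
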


\begin{proof}
Similarly to the proof of Lemma (\ref{NormLemma}), the operator $%
\sum_{\left\vert \alpha \right\vert =k}x_{\alpha }W_{\alpha }$ maps the
orthonormal family $\left\{ \delta _{\beta }:\beta \in \mathbb{F}%
_{n}^{+}\right\} $ onto an orthogonal family. Hence:%
\begin{eqnarray*}
\left\Vert \sum_{\left\vert \alpha \right\vert =k}x_{\alpha }W_{\alpha
}\right\Vert &=&\sup_{\beta \in \mathbb{F}_{n}^{+}}\left\Vert
\sum_{\left\vert \alpha \right\vert =k}x_{\alpha }W_{\alpha }\delta _{\beta
}\right\Vert =\sup_{\beta \in \mathbb{F}_{n}^{+}}\left\Vert \sum_{\left\vert
\alpha \right\vert =k}x_{\alpha }\sqrt[2]{\frac{b_{\beta }}{b_{\alpha \beta }%
}}\delta _{\alpha \beta }\right\Vert \\
&=&\sqrt[2]{\sum_{\left\vert \alpha \right\vert =k}\left\vert x_{\alpha
}\right\vert ^{2}\frac{b_{\beta }}{b_{\alpha \beta }}}\leq \sqrt[2]{%
\sum_{\left\vert \alpha \right\vert =k}\left\vert x_{\alpha }\right\vert ^{2}%
\frac{b_{0}}{b_{\alpha }}} \\
&=&\left\Vert \sum_{\left\vert \alpha \right\vert =k}x_{\alpha }W_{\alpha
}\delta _{0}\right\Vert \text{.}
\end{eqnarray*}%
This completes our proof.
\end{proof}

\bigskip We now can prove:

\begin{theorem}
\label{MainExThm}Let:%
\begin{equation*}
\left\{ 
\begin{tabular}{lll}
$f$ & $=$ & $X_{1}+X_{2}+X_{1}X_{2}$, \\ 
$g$ & $=$ & $X_{1}+X_{2}+\frac{1}{2}X_{1}X_{2}+\frac{1}{2}X_{2}X_{1}$.%
\end{tabular}%
\right.
\end{equation*}%
Then the noncommutative domain algebras $\mathcal{A}\left( \mathcal{D}%
_{f}\right) $ and $\mathcal{A}\left( \mathcal{D}_{g}\right) $ are not
isomorphic in \textrm{NCD}.
\end{theorem}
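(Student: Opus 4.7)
The plan is a reductio: suppose $\Phi : \mathcal{A}\left( \mathcal{D}_{f}\right) \to \mathcal{A}\left( \mathcal{D}_{g}\right)$ is an isomorphism in \textrm{NCD}, and reduce $\Phi$ to a form so rigid that it cannot be isometric. Since the defining inequalities for $f$ and $g$ both collapse to $\left\vert \lambda_{1}\right\vert ^{2}+\left\vert \lambda_{2}\right\vert ^{2}+\left\vert \lambda_{1}\lambda_{2}\right\vert ^{2}<1$, Theorem \ref{Functor} produces a biholomorphic self-map $\widehat{\Phi}_{1}$ of this common bounded Reinhardt domain $\mathbb{D}_{f}^{1}=\mathbb{D}_{g}^{1}$. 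I would first check that this domain is not linearly equivalent to any of Thullen's three types: its defining function is a polynomial in $\left\vert z\right\vert ^{2},\left\vert w\right\vert ^{2}$ of total degree $4$ with smooth zero set away from $0$, whereas the ball has degree $2$, the polydisc has non-smooth boundary, and the Thullen domains with $p\neq 1$ have non-polynomial defining functions. Thullen's Theorem then forces $\widehat{\Phi}_{1}(0)=0$.

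Next, Theorem \ref{Linear} yields an invertible matrix $M=(m_{ij})\in M_{2\times 2}(\mathbb{C})$ with $\Phi (W_{i}^{f})=m_{i1}W_{1}^{g}+m_{i2}W_{2}^{g}$, so $\widehat{\Phi}_{1}$ acts on $\mathbb{D}_{f}^{1}$ as the linear map $\lambda \mapsto M\lambda $. Because this is a linear biholomorphism of a Reinhardt domain onto itself, Sunada's Theorem \ref{Sunada} forces $M$ to be either diagonal or antidiagonal, and testing preservation of the boundary at $(z,0)$ and $(0,w)$ with $\left\vert z\right\vert =\left\vert w\right\vert =1$ pins the two nonzero entries of $M$ to $\mathbb{T}$.

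The final step is a direct norm computation. In either subcase one obtains $\Phi (W_{1}^{f}W_{2}^{f})=\mu \cdot W_{i}^{g}W_{j}^{g}$ for some $\mu \in \mathbb{T}$ and some $(i,j)\in \{(1,2),(2,1)\}$. The recursion (\ref{popescu2-5}) gives $b_{g_{1}g_{2}}^{g}=b_{g_{2}g_{1}}^{g}=\tfrac{1}{2}+1=\tfrac{3}{2}$ and $b_{g_{1}g_{2}}^{f}=1+1\cdot 1=2$, so Lemma \ref{NormLemma} yields $\Vert \Phi (W_{1}^{f}W_{2}^{f})\Vert =\sqrt{2/3}$ while $\Vert W_{1}^{f}W_{2}^{f}\Vert =\sqrt{1/2}$. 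Since $\Phi $ is completely isometric these values must coincide, contradicting $\sqrt{2/3}\neq \sqrt{1/2}$. The main obstacle I anticipate is the Thullen non-equivalence step: although visually clear, a fully rigorous verification requires exploiting the degree (as a polynomial in $\left\vert z\right\vert ^{2},\left\vert w\right\vert ^{2}$) and the local smoothness of the boundary, both of which are preserved by linear changes of coordinates.
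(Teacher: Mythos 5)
Your overall strategy is the same as the paper's (Thullen to force $\widehat{\Phi}_{1}(0)=0$, Theorem \ref{Linear} to make $\Phi$ linear on the generators, then a norm computation using $b_{g_{1}g_{2}}^{f}=2$ versus $b_{g_{1}g_{2}}^{g}=b_{g_{2}g_{1}}^{g}=\tfrac{3}{2}$), and your final contradiction $\sqrt{2/3}\neq\sqrt{1/2}$ is correct. The genuine gap is the step where you conclude that $M$ is diagonal or antidiagonal. Sunada's Theorem as stated in (\ref{Sunada}) is an existence statement: if two Reinhardt domains containing $0$ are biholomorphic, then \emph{some} biholomorphism between them is a permutation composed with coordinate scalings. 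It does not say that \emph{every} linear biholomorphism has this form, and that stronger claim is false in general: the unit ball of $\mathbb{C}^{2}$ is a Reinhardt domain whose linear automorphisms are exactly the unitaries, most of which are neither diagonal nor antidiagonal. For the specific domain $\left\{ \left\vert z\right\vert ^{2}+\left\vert w\right\vert ^{2}+\left\vert zw\right\vert ^{2}<1\right\}$ your conclusion happens to be true, but it needs its own argument (for instance, a linear automorphism must carry the top-degree part $\left\vert zw\right\vert ^{2}$ of the defining polynomial to a scalar multiple of itself, and factoring $\left\vert m_{11}z+m_{12}w\right\vert ^{2}\left\vert m_{21}z+m_{22}w\right\vert ^{2}=c\left\vert z\right\vert ^{2}\left\vert w\right\vert ^{2}$ forces the rows of $M$ to be proportional to the standard basis vectors in some order). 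Alternatively, you can bypass the issue the way the paper does: Lemma \ref{NormLemma2} applied to degree-one elements shows $M$ is unitary, and then $\left\Vert \Phi (W_{1}^{f}W_{1}^{f})\right\Vert ^{2}=\left\vert t_{11}\right\vert ^{4}+\tfrac{4}{3}\left\vert t_{11}t_{12}\right\vert ^{2}+\left\vert t_{12}\right\vert ^{4}=1-\tfrac{2}{3}\left\vert t_{11}t_{12}\right\vert ^{2}$, which equals $\left\Vert W_{1}^{f}W_{1}^{f}\right\Vert ^{2}=1$ only if $t_{11}t_{12}=0$; that is exactly the diagonal/antidiagonal dichotomy.

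A secondary remark on the Thullen step: your criterion that Thullen domains with $p\neq 1$ have non-polynomial defining functions fails for $p=\tfrac{1}{2}$, where the domain is $\left\vert z\right\vert ^{2}+\left\vert w\right\vert ^{4}<1$ --- polynomial, of the same total degree $4$, and with smooth boundary --- so degree, smoothness, and polynomiality alone do not exclude all three Thullen types. A clean fix is to note that a linear equivalence between Reinhardt domains containing $0$ may, by Sunada, be replaced by a permutation-plus-scaling, and no such map matches $\left\vert z\right\vert ^{2}+\left\vert w\right\vert ^{2}+\left\vert zw\right\vert ^{2}=1$ with the boundary equation of the ball, the polydisc, or any $\left\vert z\right\vert ^{2}+\left\vert w\right\vert ^{2/p}=1$. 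Since the paper asserts this non-equivalence without proof, this is a detail both arguments leave implicit rather than a defect specific to yours.
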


\begin{proof}
Remaining consistent with the notations we used in this paper, we will write 
$f=\sum_{\alpha \in \mathbb{F}_{n}^{+}}a_{\alpha }^{f}X_{\alpha }$ and $%
g=\sum_{\alpha \in \mathbb{F}_{n}^{+}}a_{\alpha }^{g}X_{\alpha }$ (so $%
a_{g_{1}}^{f}=a_{g_{2}}^{f}=a_{g_{1}}^{g}=a_{g_{2}}^{g}=1=a_{g_{1}g_{2}}^{f}$
yet $a_{g_{1}g_{2}}^{g}=a_{g_{2}g_{1}}^{g}=\frac{1}{2}$ and all other
coefficients are null). We also denote by $\left( b_{\alpha }^{f}\right)
_{\alpha \in \mathbb{F}_{n}^{+}}$ (resp. $\left( b_{\alpha }^{g}\right)
_{\alpha \in \mathbb{F}_{n}^{+}}$) the weight given by Equality (\ref%
{popescu2-5}) for the coefficients $\left( a_{\alpha }^{f}\right) _{\alpha
\in \mathbb{F}_{n}^{+}}$ (resp. $\left( a_{\alpha }^{g}\right) _{\alpha \in 
\mathbb{F}_{n}^{+}}$). The weighted shifts associated to $f$ (resp. $g$) are
denoted by $W_{\alpha }^{f}$ (resp. $W_{\alpha }^{g}$) for all $\alpha \in 
\mathbb{F}_{n}^{+}$.

Since, from Equality (\ref{popescu2-5}), we have $a_{g_{i}}=b_{g_{i}}$ and $%
b_{g_{i}g_{j}}=b_{g_{i}}a_{g_{i}}+a_{g_{i}g_{j}}$ ($i\not=j\in \left\{
1,2\right\} $) so we easily compute:%
\begin{equation}
\left\{ 
\begin{array}{l}
b_{g_{1}}^{f}=b_{g_{2}}^{f}=b_{g_{1}}^{g}=b_{g_{2}}^{g}=1\text{,} \\ 
b_{g_{1}g_{1}}^{f}=b_{g_{2}g_{1}}^{f}=b_{g_{2}g_{2}}^{f}=1\text{ and }%
b_{g_{1}g_{2}}^{f}=2\text{,} \\ 
b_{g_{1}g_{1}}^{g}=b_{g_{2}g_{2}}^{g}=1\text{ and }%
b_{g_{1}g_{2}}^{g}=b_{g_{2}g_{1}}^{g}=\frac{3}{2}\text{.}%
\end{array}%
\right.  \label{ExampleWeights1}
\end{equation}%
We conclude from Lemma\ (\ref{NormLemma}) and Equalities (\ref%
{ExampleWeights1}) that:%
\begin{equation*}
\left\{ 
\begin{array}{l}
\left\Vert W_{g_{1}}^{f}\right\Vert =\left\Vert W_{g_{2}}^{f}\right\Vert
=\left\Vert W_{g_{1}}^{g}\right\Vert =\left\Vert W_{g_{2}}^{g}\right\Vert =1%
\text{,} \\ 
\vspace{-0.1in} \\ 
\left\Vert W_{g_{1}}^{f}W_{g_{1}}^{f}\right\Vert =\left\Vert
W_{g_{2}}^{f}W_{g_{1}}^{f}\right\Vert =\left\Vert
W_{g_{2}}^{f}W_{g_{2}}^{f}\right\Vert =1\text{ and }\left\Vert
W_{g_{1}}^{f}W_{g_{2}}^{f}\right\Vert =\frac{1}{\sqrt{2}}\text{,} \\ 
\left\Vert W_{g_{1}}^{g}W_{g_{1}}^{g}\right\Vert =\left\Vert
W_{g_{2}}^{g}W_{g_{2}}^{g}\right\Vert =1\text{ and }\left\Vert
W_{g_{2}g_{1}}^{g}\right\Vert =\left\Vert W_{g_{1}g_{2}}^{g}\right\Vert =%
\sqrt{\frac{2}{3}}\text{.}%
\end{array}%
\right.
\end{equation*}%
Assume now that there exists an $\Phi $ isomorphism in \textrm{NCD} from $%
\mathcal{A}\left( \mathcal{D}_{f}\right) $ onto $\mathcal{A}\left( \mathcal{D%
}_{g}\right) $. By Thullen, we have $\widehat{\Phi }_{1}(0)=0$. Now, by
Theorem (\ref{Linear}), there exist $t_{11},t_{12},t_{21},t_{22}\in \mathbb{C%
}$ such that:%
\begin{equation*}
\left\{ 
\begin{array}{l}
\Phi (W_{g_{1}}^{f})=t_{11}W_{g_{1}}^{g}+t_{12}W_{g_{2}}^{g}\text{,} \\ 
\Phi (W_{g_{2}}^{f})=t_{21}W_{g_{1}}^{g}+t_{22}W_{g_{2}}^{g}\text{.}%
\end{array}%
\right.
\end{equation*}%
Moreover, Lemma (\ref{NormLemma2}) implies that $T=\left[ 
\begin{array}{cc}
t_{11} & t_{12} \\ 
t_{21} & t_{22}%
\end{array}%
\right] $ is unitary. Now:%
\begin{eqnarray*}
\Phi (W_{g_{1}}^{f}W_{g_{1}}^{f}) &=&\left(
t_{11}W_{g_{1}}^{g}+t_{12}W_{g_{2}}^{g}\right) \left(
t_{11}W_{g_{1}}^{g}+t_{12}W_{g_{2}}^{g}\right) \\
&=&t_{11}^{2}W_{g_{1}g_{1}}^{g}+t_{11}t_{12}\left(
W_{g_{1}g_{2}}^{g}+W_{g_{2}g_{1}}^{g}\right) +t_{12}^{2}W_{g_{2}g_{2}}^{g}%
\text{.}
\end{eqnarray*}%
From Lemma (\ref{NormLemma2}) and since $\Phi $ is an isometry we conclude
that:%
\begin{equation*}
1=\left\Vert W_{g_{1}}^{f}W_{g_{1}}^{f}\right\Vert =\left\Vert \Phi \left(
W_{g_{1}}^{f}W_{g_{1}}^{f}\right) \right\Vert =\sqrt{\left\vert
t_{11}\right\vert ^{2}+\frac{4}{3}\left\vert t_{11}t_{12}\right\vert
^{2}+\left\vert t_{12}\right\vert ^{2}}\text{.}
\end{equation*}%
Since $\left\vert t_{11}\right\vert ^{2}+\left\vert t_{12}\right\vert ^{2}=1$
we conclude that $\left\vert t_{11}t_{12}\right\vert =0$. Hence either $%
t_{11}=0$ or $t_{12}=0$.

If $t_{11}=0$ and since $T$ is unitary, we conclude that then $\left\vert
t_{12}\right\vert =\left\vert t_{21}\right\vert =1$ and $\left\vert
t_{22}\right\vert =0$. We then have:%
\begin{equation*}
1=\left\Vert W_{g_{2}g_{1}}^{f}\right\Vert =\left\Vert \Phi \left(
W_{g_{2}g_{1}}^{f}\right) \right\Vert =\left\Vert
W_{g_{1}g_{2}}^{g}\right\Vert =\sqrt{\frac{2}{3}}
\end{equation*}%
which is obviously a contradiction! A similar computation shows that if
instead $t_{12}=0$ then we are led to a similar contradiction. Hence, $\Phi $
does not exist and our theorem is proven.
\end{proof}

\bigskip We wish to point out that this proof is an example where, while $%
\mathbb{D}^{1}$ was not able to distinguish between these two examples,
results from multivariate complex analysis together with our invariant $%
\mathbb{D}^{k}$ allows us to prove that they are not isomorphic. We chose to
use Thullen's result in our proof, yet other routes would have been
possible. For instance, we could have used the results of \cite{Sunada} and 
\cite{fu-isaev-krantz} as well. Since this example only requires $\mathbb{D}%
^{1}$ and $\mathbb{D}^{2}$, Thullen's classification is sufficient. Yet, for
many examples, one may need to use $\mathbb{D}^{k}$ for $k\geq 2$, in which
case \cite{Sunada} and \cite{fu-isaev-krantz} are appropriate.

\subsection{Characterization of Noncommutative Disk Algebras}

In this section we characterize the noncommutative disk algebras $\mathcal{A}%
\left( \mathcal{D}_{f}\right) $ in terms of their symbol $f$. We also note
that the domains of $\mathbb{C}^{n}$ obtained from our algebras appear to be
new and interesting examples.

The following simple result will prove useful:

\begin{lemma}
\label{Scaling}Let $f=\sum_{\alpha \in \mathbb{F}_{n}^{+}}a_{\alpha
}X_{\alpha }$ be a positive regular free series. Let $W_{1},\ldots ,W_{n}$
be the weighted shifts associated with $f$. If $c_{1},\ldots ,c_{n}$ are
non-negative numbers, then $c_{1}W_{1},c_{2}W_{2},\ldots ,c_{n}W_{n}$ are
the weighted shifts spaces associated to some positive regular free series $%
g=\sum_{\alpha \in \mathbb{F}_{n}^{+}}a_{\alpha }^{\prime }X_{\alpha }$.
Therefore $\mathcal{A}_{n}\left( \mathcal{D}_{f}\right) \cong \mathcal{A}%
_{n}(\mathcal{D}_{g}),$ and $\mathcal{D}_{g}\left( 
\mathbb{C}
\right) $ is obtained from $\mathcal{D}_{f}\left( 
\mathbb{C}
\right) $ by scaling the coordinates in $%
\mathbb{C}
^{n}$ (i.e., by a diagonal linear map with respect to the canonical basis of 
$%
\mathbb{C}
^{n}$).
\end{lemma}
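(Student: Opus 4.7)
The plan is to produce $g$ explicitly. Guided by the formula $W_i^f\delta_\alpha=\sqrt{b_\alpha/b_{g_i\alpha}}\,\delta_{g_i\alpha}$ from Theorem (\ref{Popescu2}), I want the associated weights $b'_\alpha$ for $g$ to satisfy $\sqrt{b'_\alpha/b'_{g_i\alpha}}=c_i\sqrt{b_\alpha/b_{g_i\alpha}}$. This suggests $b'_\alpha=b_\alpha/c_\alpha^2$, where for $\alpha=g_{i_1}\cdots g_{i_k}$ I set $c_\alpha=c_{i_1}c_{i_2}\cdots c_{i_k}$ (with $c_e=1$). In view of the formula (\ref{popescu2-5}) expressing the $b_\alpha$'s in terms of the $a_\alpha$'s and the obvious multiplicativity $c_{\gamma_1\cdots\gamma_j}=c_{\gamma_1}\cdots c_{\gamma_j}$, the natural candidate for $g$ is
\begin{equation*}
g=\sum_{\alpha\in\mathbb{F}_n^+} a'_\alpha X_\alpha,\qquad a'_\alpha:=\frac{a_\alpha}{c_\alpha^2}.
\end{equation*}
(I am implicitly assuming each $c_i>0$; otherwise $c_iW_i=0$ is not a weighted shift with a nonzero weight and one should restrict to the subfamily of strictly positive $c_i$'s.)

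First, I would check that $g$ is a positive regular $n$-free formal power series in the sense of (\ref{Condition1}). Clearly $a'_e=0$ and $a'_{g_i}=a_{g_i}/c_i^2>0$. For the growth bound, setting $C=\max_i c_i^{-2}$, one has $(a'_\alpha)^2\leq C^{2k}a_\alpha^2$ for $|\alpha|=k$, so
\begin{equation*}
\Bigl(\sum_{|\alpha|=k}(a'_\alpha)^2\Bigr)^{1/k}\leq C^2\Bigl(\sum_{|\alpha|=k}a_\alpha^2\Bigr)^{1/k},
\end{equation*}
and the right-hand side is bounded uniformly in $k$ by hypothesis on $f$. Second, using (\ref{popescu2-5}) I would verify directly that the weights $b'_\alpha$ produced by $g$ are exactly $b_\alpha/c_\alpha^2$; the computation is simply the observation that every factorization $\alpha=\gamma_1\cdots\gamma_j$ with $|\gamma_i|\geq1$ contributes $a'_{\gamma_1}\cdots a'_{\gamma_j}=a_{\gamma_1}\cdots a_{\gamma_j}/c_\alpha^2$. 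Consequently,
\begin{equation*}
W_i^g\delta_\alpha=\sqrt{\tfrac{b'_\alpha}{b'_{g_i\alpha}}}\,\delta_{g_i\alpha}=c_i\sqrt{\tfrac{b_\alpha}{b_{g_i\alpha}}}\,\delta_{g_i\alpha}=c_iW_i^f\delta_\alpha,
\end{equation*}
so $W_i^g=c_iW_i^f$ as operators on $\mathcal{F}_n$.

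Since all $c_i$ are nonzero, the unital algebras generated by $\{W_1^f,\dots,W_n^f\}$ and by $\{c_1W_1^f,\dots,c_nW_n^f\}=\{W_1^g,\dots,W_n^g\}$ coincide inside $\mathcal{B}(\mathcal{F}_n)$, so their norm closures $\mathcal{A}(\mathcal{D}_f)$ and $\mathcal{A}(\mathcal{D}_g)$ are literally equal, with the identity as the desired isomorphism. Finally, for the scalar domains, $(z_1,\ldots,z_n)\in\mathcal{D}_g(\mathbb{C})$ means $\sum_\alpha a_\alpha|z_\alpha/c_\alpha|^2\leq1$, i.e. $(z_1/c_1,\ldots,z_n/c_n)\in\mathcal{D}_f(\mathbb{C})$, so $\mathcal{D}_g(\mathbb{C})$ is the image of $\mathcal{D}_f(\mathbb{C})$ under the diagonal linear map $(w_1,\ldots,w_n)\mapsto(c_1w_1,\ldots,c_nw_n)$. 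There is no substantive obstacle; the only delicate point is the weight bookkeeping, which reduces entirely to the multiplicativity of $\alpha\mapsto c_\alpha$ on words.
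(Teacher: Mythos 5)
Your proof is correct and follows essentially the same route as the paper: both define $a'_\alpha$ by dividing $a_\alpha$ by the square of the product of the $c_i$'s occurring in $\alpha$ (the paper writes this as $c_1^{2r_1(\alpha)}$ and reduces to rescaling one generator at a time, which is your $c_\alpha^2$ done coordinate-by-coordinate), then checks that the $b_\alpha$'s rescale identically via (\ref{popescu2-5}) and concludes. Your explicit verification of the growth condition in (\ref{Condition1}) and your remark that the $c_i$ must in fact be strictly positive are both welcome additions that the paper leaves implicit.
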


\begin{proof}
It is clearly enough to scale only one of the $W_{i}$'s. Assume that $%
W_{1}^{\prime }=c_{1}W_{1},$ and that $W_{j}^{\prime }=W_{j}$ for $j\in
\left\{ 2,\ldots ,n\right\} $. We claim that there exist scalars $\left(
a_{\alpha }^{\prime }\right) _{\left\vert \alpha \right\vert \geq 1}$
satisfying the conditions (\ref{Condition1}) such that $W_{1}^{\prime
},W_{2}^{\prime },\ldots ,W_{n}^{\prime }$ are the weighted shifts
associated to the positive regular free series $g=\sum_{\alpha \in \mathbb{F}%
_{n}^{+}}a_{\alpha }^{\prime }X_{\alpha }$.

Let $r_{1}:\mathbb{F}_{n}^{+}\rightarrow 
\mathbb{N}
$ be the function that counts the number of $g_{1}$'s in a word of $\mathbb{F%
}_{n}^{+}.$ For example, $r_{1}\left( g_{2}g_{1}g_{3}g_{1}\right) =2$ and $%
r_{1}\left( g_{2}g_{4}\right) =0$. For $\alpha \in \mathbb{F}_{n}^{+},$
define%
\begin{equation*}
a_{\alpha }^{\prime }=\frac{1}{c_{1}^{2r_{1}\left( \alpha \right) }}%
a_{\alpha }.
\end{equation*}%
Notice that $\left( a_{\alpha }^{\prime }\right) _{\left\vert \alpha
\right\vert \geq 1}$ satisfies the conditions (\ref{Condition1}). Then from
Theorem \ref{Popescu2} there exist positive real numbers $b_{\alpha
}^{\prime }$ and $n$ weighted shifts $V_{1},\ldots ,V_{n}$ associated to $%
g=\sum_{\alpha \in \mathbb{F}_{n}^{+}}a_{\alpha }^{\prime }X_{\alpha }$. The 
$b_{\alpha }^{\prime }$'s must satisfy equation $(\ref{popescu2-2})$ and we
can easily check that these are given by 
\begin{equation*}
b_{\alpha }^{\prime }=\frac{1}{c_{1}^{2r_{1}\left( \alpha \right) }}%
b_{\alpha }.
\end{equation*}%
Therefore, the weights derived from $\left( a_{\alpha }^{\prime }\right)
_{\left\vert \alpha \right\vert \geq 1}$ are given by the formulas:%
\begin{eqnarray*}
V_{1}\delta _{\alpha } &=&\sqrt{\frac{b_{\alpha }^{\prime }}{b_{g_{1}\alpha
}^{\prime }}}\delta _{g_{1}\alpha }=c_{1}\sqrt{\frac{b_{\alpha }}{%
b_{g_{1}\alpha }}}\delta _{g_{1}\alpha }=c_{1}W_{1}\delta _{a} \\
V_{j}\delta _{\alpha } &=&\sqrt{\frac{b_{\alpha }^{\prime }}{b_{g_{j}\alpha
}^{\prime }}}\delta _{g_{j}\alpha }=W_{j}\delta _{a}\text{ }\qquad \text{for 
}j\geq 2,
\end{eqnarray*}%
and the proof is complete.

We will now see that $\mathcal{D}_{g}\left( 
\mathbb{C}
\right) $ can be obtained from $\mathcal{D}_{f}\left( 
\mathbb{C}
\right) $ by scaling the canonical coordinates. For $\alpha \in \mathbb{F}%
_{n}^{+},$ notice that 
\begin{equation*}
\left\vert \lambda _{\alpha }\right\vert ^{2}=\left\vert \lambda
_{1}\right\vert ^{2r_{1}\left( \alpha \right) }\left\vert \lambda
_{2}\right\vert ^{2r_{2}\left( \alpha \right) }\cdots \left\vert \lambda
_{n}\right\vert ^{2r_{n}\left( \alpha \right) },
\end{equation*}%
where for $i\leq n,$ $r_{i}\left( \alpha \right) $ is the number of $g_{i}$%
's in the word $\alpha .$ Then we check easily that $a_{a}^{\prime
}\left\vert \lambda _{\alpha }\right\vert ^{2}=a_{\alpha }\left\vert \lambda
_{\alpha }^{\prime }\right\vert ,$ where 
\begin{equation*}
\lambda ^{\prime }=\left( \frac{\lambda _{1}}{c_{1}},\lambda _{2},\ldots
,\lambda _{n}\right) \in 
\mathbb{C}
^{n}.
\end{equation*}%
Then it follows that $\sum_{\left\vert \alpha \right\vert \geq 1}a_{\alpha
}^{\prime }\left\vert \lambda _{\alpha }\right\vert ^{2}\leq 1$ iff $%
\sum_{\left\vert \alpha \right\vert \geq 1}a_{\alpha }\left\vert \lambda
_{\alpha }^{\prime }\right\vert ^{2}\leq 1,$ which proves the result.
\end{proof}

\bigskip

\bigskip

Let $f,g$ be positive regular free series, and assume that there exists an
isomorphism $\Phi :\mathcal{A}\left( \mathcal{D}_{f}\right) \longrightarrow 
\mathcal{A}\left( \mathcal{D}_{g}\right) $ in \textrm{NCD }such that $%
\widehat{\Phi }_{1}(0)=0.$ After rescailing we can assume that for $i\leq
n,a_{i}^{f}=a_{i}^{g}=1$ and $b_{i}^{f}=b_{i}^{g}=1.$ Then by Lemma \ref%
{NormLemma2}, $\left\Vert \sum_{i=1}^{n}c_{i}W_{i}^{f}\right\Vert
=\left\Vert \sum_{i=1}^{n}c_{i}W_{i}^{g}\right\Vert =\sqrt{%
\sum_{i=1}^{n}\left\vert c_{i}\right\vert ^{2}}$ for all $c_{1},\ldots
,c_{n}\in 
\mathbb{C}
.$ Since $\sqrt{\sum_{i=1}^{n}\left\vert c_{i}\right\vert ^{2}}=\left\Vert
\sum_{i=1}^{n}c_{i}W_{i}^{f}\right\Vert =\left\Vert \Phi \left(
\sum_{i=1}^{n}c_{i}W_{i}^{f}\right) \right\Vert =$ $\left\Vert
\sum_{i=1}^{n}c_{i}\sum_{j=1}^{n}m_{ij}W_{j}^{g}\right\Vert =\left\Vert
\sum_{j=1}^{n}\left( \sum_{i=1}^{n}c_{i}m_{ij}\right) W_{j}^{g}\right\Vert =%
\sqrt{\sum_{j=1}^{m}\left\vert \sum_{i=1}^{n}c_{i}m_{ij}\right\vert ^{2}},$
we conclude that $M$ is unitary.

\bigskip

\begin{corollary}
\label{unitary-corollary}After rescaling, we can assume that the $M$ of
Theorem \ref{Linear}\ is unitary.
\end{corollary}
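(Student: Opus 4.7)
The plan is to use Lemma \ref{Scaling} to rescale the generating shifts of both $\mathcal{A}(\mathcal{D}_{f})$ and $\mathcal{A}(\mathcal{D}_{g})$ by positive constants so that, in the rescaled positive regular free series $f'$ and $g'$, one has $a_{g_{i}}^{f'}=a_{g_{i}}^{g'}=1$ for every $i\leq n$. By formula (\ref{popescu2-5}), this normalization automatically forces $b_{g_{i}}^{f'}=b_{g_{i}}^{g'}=1$ as well.

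Since the rescalings provided by Lemma \ref{Scaling} are isomorphisms $\mathcal{A}(\mathcal{D}_{f})\cong \mathcal{A}(\mathcal{D}_{f'})$ and $\mathcal{A}(\mathcal{D}_{g})\cong \mathcal{A}(\mathcal{D}_{g'})$ that merely multiply each generator by a positive scalar, composing them with $\Phi$ yields a completely isometric unital isomorphism $\Phi':\mathcal{A}(\mathcal{D}_{f'})\to \mathcal{A}(\mathcal{D}_{g'})$ which still sends the zero character to the zero character. Applying Theorem \ref{Linear} to $\Phi'$ then produces a matrix $M=(m_{ij})$ satisfying $\Phi'(W_{i}^{f'})=\sum_{j}m_{ij}W_{j}^{g'}$, and it suffices to prove that this particular $M$ is unitary.

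The final step is a direct computation via Lemma \ref{NormLemma2}. Because every $b_{g_{i}}^{f'}$ and every $b_{g_{j}}^{g'}$ equals $1$, for any $c=(c_{1},\ldots ,c_{n})\in \mathbb{C}^{n}$ we obtain
\[
\Bigl\|\sum_{i=1}^{n}c_{i}W_{i}^{f'}\Bigr\|=\sqrt{\sum_{i=1}^{n}|c_{i}|^{2}},
\]
together with the analogous identity on the $g'$-side. Combined with the fact that $\Phi'$ is an isometry, this gives
\[
\sqrt{\sum_{i=1}^{n}|c_{i}|^{2}}=\Bigl\|\Phi'\Bigl(\sum_{i}c_{i}W_{i}^{f'}\Bigr)\Bigr\|=\Bigl\|\sum_{j}\Bigl(\sum_{i}c_{i}m_{ij}\Bigr)W_{j}^{g'}\Bigr\|=\sqrt{\sum_{j=1}^{n}\Bigl|\sum_{i=1}^{n}c_{i}m_{ij}\Bigr|^{2}}
\]
for every $c\in \mathbb{C}^{n}$, so the map $c\mapsto M^{\mathrm{T}}c$ is a Euclidean isometry of $\mathbb{C}^{n}$. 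Equivalently, $M^{\mathrm{T}}$ is unitary, and therefore so is $M$.

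I do not expect any serious obstacle. The only substantive ingredient is Lemma \ref{Scaling}, which certifies that the preliminary renormalization can be carried out within the category \textrm{NCD}; after that, the norm of any linear combination of first-level shifts becomes exactly the Euclidean norm of its coefficients, and the isometric hypothesis on $\Phi$ translates without further work into unitarity of $M$.
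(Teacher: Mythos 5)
Your proposal is correct and follows essentially the same route as the paper: rescale via Lemma \ref{Scaling} so that $a_{g_i}=b_{g_i}=1$ on both sides, then use Lemma \ref{NormLemma2} to identify the norm of a linear combination of first-level shifts with the Euclidean norm of its coefficients, and deduce unitarity of $M$ from the isometry of $\Phi$. Your write-up is in fact slightly more careful than the paper's, since you explicitly check that the rescaling isomorphisms preserve the zero-character condition before reapplying Theorem \ref{Linear}.
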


We denote by $S_{1},\ldots ,S_{n}$ the unilateral shifts on $\ell ^{2}\left( 
\mathbb{F}_{n}^{+}\right) $ generating $\mathcal{A}_{n}$ as in section 2. We
need the following:

\begin{theorem}[Davidson-Pitts \protect\cite{Davidson-Pitts-automorphism}]
\label{davidson-pitts}For every $\chi \in \mathbb{D}^{1}\left( \mathcal{A}%
_{n}\right) $ (the open unit ball of $%
\mathbb{C}
^{n}$) there exists an automorphism $\Phi :\mathcal{A}_{n}\rightarrow 
\mathcal{A}_{n}$ in \textrm{NCD }such that $\widehat{\Phi }_{1}(\chi )=0.$
\end{theorem}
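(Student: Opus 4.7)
\textbf{Plan for Theorem \ref{davidson-pitts}.} The strategy is to mimic, inside $\mathcal{A}_n$, the classical Möbius involution of the open unit ball $\mathbb{B}_n\subset\mathbb{C}^n$ that interchanges a given point $\chi\in\mathbb{B}_n$ with the origin. Recall that this involution is the biholomorphism
$$\phi_\chi(z) \;=\; \frac{\chi - P_\chi z - s_\chi Q_\chi z}{1 - \langle z,\chi\rangle}, \qquad s_\chi=\sqrt{1-|\chi|^2},$$
where $P_\chi$ is the orthogonal projection of $\mathbb{C}^n$ onto $\mathbb{C}\chi$ and $Q_\chi=I-P_\chi$. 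I would lift $\phi_\chi$ to the noncommutative setting by replacing each scalar coordinate $z_i$ with the shift $S_i$ and the inner product $\langle z,\chi\rangle$ with $\sum_j\bar\chi_j S_j$. Since $\|\sum_j\bar\chi_j S_j\|=|\chi|<1$, the operator $I-\sum_j\bar\chi_j S_j$ is invertible in $\mathcal{A}_n$, its inverse being given by a norm-convergent geometric series in the $S_i$'s, so the resulting $n$-tuple $T^\chi=(T^\chi_1,\ldots,T^\chi_n)$ lies componentwise in $\mathcal{A}_n$.

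Next I would check three properties of $T^\chi$. First, the tuple $(T^\chi_1,\ldots,T^\chi_n)$ is a row contraction on $\mathcal{F}_n$: this follows from a direct calculation that exploits the factorization built into the definition of $\phi_\chi$ together with the fact that $[S_1\ \cdots\ S_n]$ is already a row contraction of isometries with orthogonal ranges. Second, once the row-contraction property is in hand, Popescu's Theorem (\ref{Popescu1}) immediately produces a completely contractive unital homomorphism $\Phi_\chi:\mathcal{A}_n\to\mathcal{A}_n$ with $\Phi_\chi(S_i)=T^\chi_i$ for each $i$. Third, for any character $\lambda\in\mathbb{B}_n$, substituting the scalars $\lambda_i$ for the shifts $S_i$ in the formulas defining $T^\chi_i$ recovers precisely the $i$-th coordinate of $\phi_\chi(\lambda)$; applied to $\lambda=\chi$ this yields $\widehat{\Phi_\chi}_1(\chi)=\phi_\chi(\chi)=0$, which is the desired identity.

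The main obstacle, and the step that requires the most care, is promoting this completely contractive unital homomorphism to a \emph{completely isometric automorphism}, since \textrm{NCD} demands this stronger condition. The cleanest route I would take is to verify that the construction is involutive: a resolvent computation with $(I-\sum_j\bar\chi_j S_j)^{-1}$, mirroring the classical identity $\phi_\chi\circ\phi_\chi=\mathrm{id}_{\mathbb{B}_n}$, should yield $\Phi_\chi\circ\Phi_\chi=\mathrm{id}_{\mathcal{A}_n}$ (possibly after composition with a fixed unitary rotation of the generators dictated by the behaviour of $\phi_\chi$ at $0$). Once this involutive identity is established, $\Phi_\chi^{-1}$ exists and is itself completely contractive, so $\Phi_\chi$ is automatically completely isometric and hence a morphism in \textrm{NCD}. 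All remaining ingredients of the proof are either formal or reduce to direct substitutions on the generators; the algebraic verification of the involution is the technical heart of the argument.
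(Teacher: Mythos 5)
The paper offers no proof of this statement: it is quoted as an external result of Davidson and Pitts \cite{Davidson-Pitts-automorphism} and used as a black box, so there is no internal proof to compare yours against. That said, your sketch reproduces the standard construction underlying the cited theorem: lift the M\"obius involution $\phi_\chi$ of the unit ball to the generators by substituting $S_j$ for $z_j$, using $\left\Vert \sum_j\overline{\chi_j}S_j\right\Vert=\left\vert\chi\right\vert<1$ (the $S_j$ are isometries with orthogonal ranges) to invert $I-\sum_j\overline{\chi_j}S_j$ inside $\mathcal{A}_n$; note that the induced map on characters is exactly $\phi_\chi$, whence $\widehat{\Phi}_1(\chi)=\phi_\chi(\chi)=0$; and use involutivity to upgrade complete contractivity to complete isometry. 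The outline is sound, but the two steps you defer to ``direct calculation''---that $(T_1^\chi,\ldots,T_n^\chi)$ is a row contraction, and that $\Phi_\chi\circ\Phi_\chi=\mathrm{id}$ (no extra rotation is needed, since $\phi_\chi$ is already an involution)---constitute essentially all of the technical content of the Davidson--Pitts theorem: the classical defect identity expressing $1-\langle\phi_\chi(z),\phi_\chi(w)\rangle$ as a product of resolvents must be re-derived with noncommuting variables, where the ordering of factors is no longer free, and the involution identity requires substituting operator resolvents into operator resolvents. Neither is routine, though both are true. One small bookkeeping point: Theorem (\ref{Popescu1}) as stated in the paper only promises a completely bounded morphism, whereas your final step needs the completely contractive version, which is what the Poisson-transform argument sketched after that theorem actually delivers.
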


We are now ready to characterize noncommutative disk algebras by their
symbol.

\begin{theorem}
Let $f$ be a positive regular $n$-free series. Then $\mathcal{A}\left( 
\mathcal{D}_{f}\right) $ is isomorphic to $\mathcal{A}_{n}$ in \textrm{NCD }%
if and only if $f=\sum_{i=1}^{n}c_{i}X_{i}$ with $\left( c_{1},\ldots
,c_{n}\right) \in \mathbb{C}^{n}$.
\end{theorem}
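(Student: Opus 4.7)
My plan is to prove the two implications separately; the forward direction follows from a direct computation with the weights $b_\alpha$, while the converse combines the main structural result of the paper (Theorem~\ref{Linear}) with the transitive automorphism group of $\mathcal{A}_n$ (Theorem~\ref{davidson-pitts}). For the easy direction, if $f = \sum_{i=1}^n c_i X_i$ (which, since $f$ is positive regular, forces each $c_i > 0$), then equation~\eqref{popescu2-5} collapses to $b_\alpha = c_{i_1}\cdots c_{i_{|\alpha|}}$ whenever $\alpha = g_{i_1}\cdots g_{i_{|\alpha|}}$, giving $W_i^f = c_i^{-1/2} S_i$; Lemma~\ref{Scaling} then immediately yields $\mathcal{A}(\mathcal{D}_f) \cong \mathcal{A}_n$.

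For the converse, suppose $\Phi: \mathcal{A}(\mathcal{D}_f) \to \mathcal{A}_n$ is an isomorphism in NCD. The first step is to reduce to the case where the dual map fixes the origin. Setting $\xi := \widehat{\Phi}_1^{-1}(0) \in \mathbb{D}^1(\mathcal{A}_n)$ and applying Theorem~\ref{davidson-pitts}, I choose an automorphism $\Psi_0$ of $\mathcal{A}_n$ with $\widehat{\Psi_0}_1(\xi) = 0$; contravariance of $\mathbb{D}^1$ then gives $\widehat{\Psi_0^{-1} \circ \Phi}_1(0) = \widehat{\Phi}_1(\xi) = 0$, so replacing $\Phi$ by $\Psi_0^{-1} \circ \Phi$ I may assume $\widehat{\Phi}_1(0) = 0$. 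Theorem~\ref{Linear} then produces an invertible matrix $M$ with $\Phi(W_i^f) = \sum_j M_{ij} S_j$, and a further rescaling of the generators through Lemma~\ref{Scaling} (which replaces $f$ by a series $\tilde f$ having the same nonzero-coefficient pattern) lets me invoke Corollary~\ref{unitary-corollary} and assume $M$ is unitary.

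With these normalizations in place, I reinterpret $\Phi$ through the universal properties. By Theorem~\ref{Popescu1}, a completely contractive unital morphism $\pi: \mathcal{A}_n \to \mathcal{B}(\mathcal{H})$ is exactly the data of a row contraction $T = (\pi(S_1),\ldots,\pi(S_n))$ on $\mathcal{H}$, while by Theorem~\ref{Popescu2} the composition $\pi\circ\Phi: \mathcal{A}(\mathcal{D}_{\tilde f}) \to \mathcal{B}(\mathcal{H})$ is exactly the data of the $n$-tuple $MT \in \mathcal{D}_{\tilde f}(\mathcal{H})$. Since $\Phi$ is an isomorphism and $M$ is unitary (so $T\mapsto MT$ is a bijection preserving the row-contraction condition), I conclude that $\mathcal{D}_{\tilde f}(\mathcal{H})$ coincides with the set of all row contractions on $\mathcal{H}$, for every Hilbert space $\mathcal{H}$.

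Specializing to $\mathcal{H} = \mathbb{C}$, the set of scalar row contractions is the Euclidean ball $\{z\in\mathbb{C}^n : \sum_i |z_i|^2 \leq 1\}$, while $\mathcal{D}_{\tilde f}(\mathbb{C}) = \{z : \sum_\alpha a_\alpha^{\tilde f}|z_\alpha|^2 \leq 1\}$; testing this equality on tuples supported on any chosen subset of coordinates and pushing toward the unit sphere, the nonnegativity of the coefficients $a_\alpha^{\tilde f}$ forces $a_\alpha^{\tilde f} = 0$ whenever $|\alpha|\geq 2$. Since the support of $\tilde f$ agrees with that of $f$, I conclude that $f$ itself is purely linear. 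The most delicate point I expect will be the compatibility bookkeeping between the two preliminary normalizations --- the Davidson--Pitts move taking $\widehat{\Phi}_1(0)$ to $0$, and the Lemma~\ref{Scaling} rescaling placing $M$ in $U(n)$ --- together with transferring the final conclusion about $\tilde f$ back to $f$; once these are arranged, the rest is a straightforward comparison of two sets cut out by polynomial inequalities at a single character.
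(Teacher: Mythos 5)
Your proposal is correct, and while the scaffolding of the converse direction --- the Davidson--Pitts reduction to $\widehat{\Phi }_{1}(0)=0$, Theorem (\ref{Linear}) to produce the matrix $M$, and the rescaling via Lemma (\ref{Scaling}) together with Corollary (\ref{unitary-corollary}) to make $M$ unitary --- coincides exactly with the paper's, your final step takes a genuinely different route. The paper finishes with operator-norm computations: using Lemma (\ref{NormLemma2}) it equates $1=\left\Vert S_{i}S_{j}\right\Vert $ with $\sqrt{\sum_{k,l}\left\vert u_{ik}u_{jl}\right\vert ^{2}/b_{kl}}$, deduces $b_{kl}=1$ and hence $a_{kl}=0$ from $b_{kl}\geq 1$, and then runs an induction on word length. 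You instead push the row-contraction condition through the isomorphism via the universal properties (Theorems (\ref{Popescu1}) and (\ref{Popescu2})) to conclude that every row contraction lies in $\mathcal{D}_{\tilde{f}}\left( \mathcal{H}\right) $, and then kill all coefficients $a_{\alpha }^{\tilde{f}}$ with $\left\vert \alpha \right\vert \geq 2$ at once by evaluating at a single boundary point such as $z=(1/\sqrt{n},\ldots ,1/\sqrt{n})$ --- no induction and no further norm computations. Two remarks. First, you only need the inclusion $\{\text{row contractions}\}\subseteq \mathcal{D}_{\tilde{f}}\left( \mathcal{H}\right) $ and only for $\mathcal{H}=\mathbb{C}$, where every character is automatically completely contractive; so your step can be phrased simply as \textquotedblleft $\widehat{\Phi }_{1}=M$ maps the open unit ball into $\mathbb{D}_{\tilde{f}}^{1}$\textquotedblright , which is already contained in Theorem (\ref{Functor}) and lets you bypass Theorem (\ref{Popescu1}) (whose statement in the paper promises only a completely \emph{bounded} morphism, though the Poisson-transform proof does yield complete contractivity, which your general-$\mathcal{H}$ version relies on). Second, your forward direction ($b_{\alpha }=c_{i_{1}}\cdots c_{i_{\left\vert \alpha \right\vert }}$, hence $W_{i}^{f}=c_{i}^{-1/2}S_{i}$ and $\mathcal{A}\left( \mathcal{D}_{f}\right) =\mathcal{A}_{n}$ literally as operator algebras) is not written out in the paper but is correct and is needed for the \textquotedblleft if\textquotedblright\ half of the statement.
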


\begin{proof}
By Lemma (\ref{Scaling}) we can assume, without loss of generality, that $%
a_{g_{1}}=\cdots =a_{g_{n}}=1$. Let $\Phi $ be an isomorphism from $\mathcal{%
A}_{n}$ onto $\mathcal{A}\left( \mathcal{D}_{f}\right) $ in \textrm{NCD}.
Let $\chi =\widehat{\Phi }_{1}(0)$, let $\varphi $ be an automorphism of $%
\mathcal{A}_{n}$ such that $\widehat{\varphi }_{1}\left( \chi \right) =0$
given by Theorem \ref{Linear}. We set $\eta =\Phi \circ \varphi $. By
functoriality, $\widehat{\eta }_{1}\left( 0\right) =0$ and thus $\eta $
satisfies the hypotheses of Theorem (\ref{Linear}). Therefore, there exists
a matrix $U=\left( u_{ij}\right) _{i,j\in \left\{ 1,\ldots ,n\right\} }\in
M_{n\times n}$ such that:%
\begin{equation*}
\eta (S_{i})=\sum_{j=1}^{n}u_{ij}W_{g_{j}}^{f}\text{.}
\end{equation*}%
As in Corollary (\ref{unitary-corollary}), the matrix $U$ is unitary. We
compute:%
\begin{eqnarray*}
1 &=&\left\Vert S_{i}S_{j}\right\Vert =\left\Vert \Phi \left(
S_{i}S_{j}\right) \right\Vert \\
&=&\left\Vert
\sum_{k,l=1}^{n}u_{ik}u_{jl}W_{g_{k}}^{f}W_{g_{l}}^{f}\right\Vert =\sqrt[2]{%
\sum_{k,j=1}^{n}\left\vert u_{ik}u_{jl}\right\vert ^{2}\frac{1}{b_{kl}}}%
\text{.}
\end{eqnarray*}%
To simplify notation in this proof, we write $a_{i_{1}\cdots i_{k}}$ for $%
a_{g_{i_{1}}\cdots g_{i_{k}}}$ and similarly $b_{i_{1}\cdots i_{k}}$ for $%
b_{g_{i_{1}}\cdots g_{i_{k}}}$. By Identity (\ref{popescu2-5}), we have $%
b_{kl}=b_{k}a_{l}+a_{kl}=1+a_{kl}\geq 1$ and notice that:%
\begin{equation*}
\sqrt[2]{\sum_{k,l=1}^{n}\left\vert u_{ik}u_{jl}\right\vert ^{2}}=1\text{.}
\end{equation*}

Fix $k_{0},l_{0}\leq n$. Let $i,j\leq n$ such that $u_{ik_{0}}\not=0$ and $%
u_{jl_{0}}\not=0$. Then since $\sum_{k,j=1}^{n}\left\vert
u_{ik}u_{jl}\right\vert ^{2}\frac{1}{b_{kl}}=1$ and $\sum_{k,j=1}^{n}\left%
\vert u_{ik}u_{jl}\right\vert ^{2}=1$, and $u_{ik_{0}}u_{jl_{0}}\not=0$ by
assumption, we must have $b_{k_{0}l_{0}}=1$ (otherwise, since $b_{kl}>1$ for
all $k,l$, both sum of squares could not be one at the same time). Hence $%
a_{k_{0}l_{0}}=0$. As $k_{0},l_{0}$ are arbitrary, we conclude that $%
a_{\alpha }=0$ for all $\alpha \in \mathbb{F}_{n}^{+}$ with $\left\vert
\alpha \right\vert =2$.

We proceed identically by induction.
\end{proof}

\subsection{Examples of Domains in $%
\mathbb{C}
^{nk^{2}}$ with non compact automorphic group}

The Riemann mapping theorem tells us that a bounded simply connected domain $%
D\subset $ $%
\mathbb{C}
$ is homogeneous, (i.e., if $z\in D$ is fixed, for every $w\in D$ there
exists a biholomorphic map from $D$ to $D$ that maps $z$ to $w$), and hence
the group of automorphisms of $D$ is non compact. The situation is much more
rigid in higher dimensions. In 1927, Kritikos \cite{kritikos} proved that
every automorphism of $\left\{ \left( z_{1},z_{2}\right) \in 
\mathbb{C}
^{2}:\left\vert z_{1}\right\vert +\left\vert z_{2}\right\vert <1\right\} $
fixes the origin and is linear. In 1931, Thullen \cite{Thullen}
characterized all bounded proper Reinhardt domains of $%
\mathbb{C}
^{2}$ that contain the origin and that have non compact automorphic group
(i.e., there exists a biholomorphic map that does not fix the origin).
Domains of $%
\mathbb{C}
^{n}$ with non-compact automorphism group have been studied extensively,
leading in particular to many deep characterizations of the unit ball among
domains in $%
\mathbb{C}
^{n},$ as well as many families of domains of $%
\mathbb{C}
^{n}$ with interesting geometries. We refer to \cite{isaev-krantz} for a
survey and references.

\bigskip

We conclude this paper with a remark that Theorem \ref{Functor} combined
with the result of Davidson and Pitts \cite{Davidson-Pitts-automorphism}
mentioned in Theorem \ref{davidson-pitts} can be used to obtain new bounded
domains in $%
\mathbb{C}
^{nk^{2}}$ with non-compact automorphic group.

\begin{proposition}
For every $n,k\in 
\mathbb{N}
,$ the domain 
\begin{equation*}
\mathbb{D}^{k}\left( \mathcal{A}_{n}\right) =\left\{ \left( T_{1},\ldots
,T_{n}\right) \in \left( M_{k\times k}\right) ^{n}:T_{1}T_{1}^{\ast }+\cdots
+T_{n}T_{n}^{\ast }\leq I\right\} ^{\circ }\subset 
\mathbb{C}
^{nk^{2}}
\end{equation*}%
does not have a compact automorphic group.
\end{proposition}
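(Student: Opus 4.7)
The plan is to transport the Davidson--Pitts automorphisms of $\mathcal{A}_n$ up through the contravariant functor $\mathbb{D}^k$ of Theorem (\ref{Functor}) to produce biholomorphic self-maps of $\mathbb{D}^k(\mathcal{A}_n)$ that push the origin of $(M_{k\times k})^n$ arbitrarily close to the boundary. First, for each $\chi$ in the open unit ball $B_n$ of $\mathbb{C}^n$, Theorem (\ref{davidson-pitts}) yields an \textrm{NCD}-automorphism $\Phi_\chi$ of $\mathcal{A}_n$ with $\widehat{\Phi_\chi}_1(\chi)=0$. Setting $\Psi_\chi:=\Phi_\chi^{-1}$, which is still an \textrm{NCD}-automorphism, functoriality of $\mathbb{D}^1$ yields $\widehat{\Psi_\chi}_1(0)=\chi$, and Theorem (\ref{Functor}) then produces $\widehat{\Psi_\chi}_k\in\mathrm{Aut}(\mathbb{D}^k(\mathcal{A}_n))$ for every $k\in\mathbb{N}^{\ast}$.

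The central computation is to pin down $\widehat{\Psi_\chi}_k(0)$, where $0$ denotes the zero $n$-tuple in $(M_{k\times k})^n$. Expanding $\Psi_\chi(S_i)=\sum_{\alpha\in\mathbb{F}_n^{+}}c_{i,\alpha}S_\alpha$ as in Lemma (\ref{radialCv}) and inserting this into the formula for the functor from the proof of Theorem (\ref{Functor}), every $\alpha$ with $|\alpha|\geq 1$ contributes $c_{i,\alpha}0_\alpha=0$, while $\alpha=e$ contributes $c_{i,e}I_k$, so
\[
\widehat{\Psi_\chi}_k(0)=(c_{1,e}I_k,\ldots,c_{n,e}I_k).
\]
Reading off the constant terms at level $1$ gives $c_{i,e}=\widehat{\Psi_\chi}_1(0)_i=\chi_i$, whence $\widehat{\Psi_\chi}_k(0)=(\chi_1 I_k,\ldots,\chi_n I_k)$.

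To finish, I would choose a sequence $\chi_m\in B_n$ with $\|\chi_m\|_2\to 1$, for example $\chi_m=(1-\tfrac{1}{m},0,\ldots,0)$. Then $\widehat{\Psi_{\chi_m}}_k(0)\to(I_k,0,\ldots,0)$, a boundary point of $\mathbb{D}^k(\mathcal{A}_n)$ since $I_k\cdot I_k^{\ast}+0+\cdots+0=I_k$. Because $\mathbb{D}^k(\mathcal{A}_n)$ is a bounded open subset of $\mathbb{C}^{nk^{2}}$, Montel's theorem forces any subsequence of the automorphisms $\widehat{\Psi_{\chi_m}}_k$ to admit a further subsequence converging uniformly on compact sets to some holomorphic map $\Theta:\mathbb{D}^k(\mathcal{A}_n)\to\overline{\mathbb{D}^k(\mathcal{A}_n)}$ with $\Theta(0)$ on the boundary. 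No such $\Theta$ belongs to $\mathrm{Aut}(\mathbb{D}^k(\mathcal{A}_n))$, so the automorphism group admits a sequence with no convergent subsequence in the compact-open topology and is therefore non-compact. The one point that needs care is the identification of $\widehat{\Psi_\chi}_k(0)$: the sum defining the functor must be interpreted through the radial convergence of Lemma (\ref{radialCv}) rather than as a plain power series, but since all but the $\alpha=e$ term vanish at $T=0$, this reduces to a finite computation.
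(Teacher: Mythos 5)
Your proposal is correct and follows essentially the same route as the paper: both transport the Davidson--Pitts automorphisms through the functor $\mathbb{D}^k$, compute $\widehat{\Phi}_k(0)=(\chi_1 1_k,\ldots,\chi_n 1_k)$ from the constant term of the Fourier expansion, and let $\chi$ tend to the boundary of the unit ball. The only difference is cosmetic: you justify non-compactness via Montel's theorem and normal families, whereas the paper argues directly that the orbit of $0$ is not relatively compact and the group acts continuously.
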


\begin{proof}
By \cite{Davidson-Pitts-automorphism}, for any $\chi \in \mathbb{D}%
^{1}\left( \mathcal{A}_{n}\right) $ there exists an automorphism $\Phi $ of $%
\mathcal{A}_{n}$ such that $\widehat{\Phi _{1}}(0)=\chi $. Now, using the
expression of $\widehat{\Phi _{k}}$ in term of the expansion in series of $%
\Phi (W_{1}),\ldots ,\Phi (W_{n})$ given by Lemma\ (\ref{radialCv}), we
conclude easily that $\widehat{\Phi _{k}}\left( 0\right) =\left( \chi
_{1}1_{k},\ldots ,\chi _{n}1_{k}\right) $. Hence, the orbit of the point $%
0\in \mathbb{D}^{k}\left( \mathcal{A}_{n}\right) $ by the action of the
automorphism group of $\mathcal{A}_{n}$ admits a boundary point of $\mathbb{D%
}^{k}\left( \mathcal{A}_{n}\right) $ as a limit. Therefore, the orbit of $0$
by the automorphism group of $\mathbb{D}^{k}\left( \mathcal{A}_{n}\right) $
is not compact. Hence the group of automorphism of $\mathbb{D}^{k}\left( 
\mathcal{A}_{n}\right) $ is not compact, since it acts strongly continuously
on $\mathbb{D}^{k}\left( \mathcal{A}_{n}\right) $.
\end{proof}

If $n=k=2$, $\mathbb{D}^{2}\left( \mathcal{A}_{2}\right) $ consists of the
interior of the set of the $\lambda \in 
\mathbb{C}
^{8}$ such that if $T_{1}=\left[ 
\begin{array}{cc}
\lambda _{1} & \lambda _{2} \\ 
\lambda _{5} & \lambda _{6}%
\end{array}%
\right] $ and $T_{2}=\left[ 
\begin{array}{cc}
\lambda _{3} & \lambda _{4} \\ 
\lambda _{7} & \lambda _{8}%
\end{array}%
\right] ,$ then $T_{1}T_{1}^{\ast }+T_{2}T_{2}^{\ast }\leq I.$ Notice that 
\begin{equation*}
T_{1}T_{1}^{\ast }+T_{2}T_{2}^{\ast }=\left[ 
\begin{array}{cc}
\left\vert \lambda _{1}\right\vert ^{2}+\left\vert \lambda _{2}\right\vert
^{2} & \lambda _{1}\overline{\lambda _{5}}+\lambda _{2}\overline{\lambda _{6}%
} \\ 
\lambda _{5}\overline{\lambda _{1}}+\lambda _{6}\overline{\lambda _{2}} & 
\left\vert \lambda _{5}\right\vert ^{2}+\left\vert \lambda _{6}\right\vert
^{2}%
\end{array}%
\right] +\left[ 
\begin{array}{cc}
\left\vert \lambda _{3}\right\vert ^{2}+\left\vert \lambda _{4}\right\vert
^{2} & \lambda _{3}\overline{\lambda _{7}}+\lambda _{4}\overline{\lambda _{8}%
} \\ 
\lambda _{7}\overline{\lambda _{3}}+\lambda _{8}\overline{\lambda _{4}} & 
\left\vert \lambda _{7}\right\vert ^{2}+\left\vert \lambda _{8}\right\vert
^{2}%
\end{array}%
\right] \text{.}
\end{equation*}%
Now, $T_{1}T_{1}^{\ast }+T_{2}T\leq 1$ if and only if $\det \left(
1-T_{1}T_{1}^{\ast }-T_{2}T_{2}^{\ast }\right) \geq 0$ and the $\left(
1,1\right) $ entry of $1-T_{1}T_{1}^{\ast }-T_{2}T_{2}^{\ast }$ is
nonnegative. Hence $\mathbb{D}^{2}\left( \mathcal{A}_{2}\right) $ consists
of the interior of the set of the $\lambda \in 
\mathbb{C}
^{8}$ such that:%
\begin{equation*}
\begin{array}{l}
\left\vert \lambda _{1}\right\vert ^{2}+\left\vert \lambda _{2}\right\vert
^{2}+\left\vert \lambda _{3}\right\vert ^{2}+\left\vert \lambda
_{4}\right\vert ^{2}\leq 1\text{, and} \\ 
\vspace{-0.1in} \\ 
\sum_{i=1}^{8}\left\vert \lambda _{i}\right\vert
^{2}-\sum_{i=1}^{7}\sum_{j=i+1}^{8}\left\vert \lambda _{i}\lambda
_{j}\right\vert ^{2}+\sum_{i=1}^{3}\sum_{j=i+1}^{4}\left\vert \lambda _{i}%
\overline{\lambda _{j}}+\overline{\lambda _{i+4}}\lambda _{j+4}\right\vert
^{2}\leq 1\text{.}%
\end{array}%
\end{equation*}%
This domain has a noncompact automorphism group but appears not to fit in
the families of domains in $\mathbb{C}^{n}$ with non compact automorphic
group encountered in the literature on several complex variables. Thus,
distinguishing noncommutative domain algebras may also involve the study of
new interesting domains and their biholomorphic equivalence.

\end{document}